\newtheorem{theorem}{Theorem}[section]
\newtheorem{lemma}{Lemma}[section]
\newtheorem{definition}{Definition}[section]
\def\donchitre#1#2{\vskip 6.5cm\noindent
\parbox[t]{1in}{\special{eps:#1.eps x=6.5cm y=5.5cm}}
\hbox to 7cm{}\parbox[t]{0.0cm}{\special{eps:#2.eps x=6.5cm y=5.5cm}}}
\title{Stochastic Variance Reduced Gradient for affine rank minimization problem}
\author{
Ningning Han\thanks{School of Mathematical Sciences, Tiangong University, Tianjin, 300387, China
\textsf{email:} ningninghan@tiangong.edu.cn.},
Juan Nie\thanks{ Shenzhen Key Laboratory of Advanced Machine Learning and Applications, College of Mathematics and Statistics, Shenzhen University, Shenzhen, 518060, China. \textsf{email:} niejuan0522@163.com.},
Jian Lu\thanks{Corresponding author. Shenzhen Key Laboratory of Advanced Machine Learning and Applications, College of Mathematics and Statistics, Shenzhen University, Shenzhen, 518060, China.
\textsf{email:} jianlu@szu.edu.cn.},
Michael K. Ng\thanks{Department of Mathematics, the University of Hong Kong, Pokfulam, Hong Kong SAR.
\textsf{email:} mng@maths.hku.hk.}
}
\date{}
\begin{document}
\maketitle
\maketitle
 \begin{abstract}
We develop an efficient stochastic variance reduced gradient descent algorithm to solve the affine rank minimization problem consists of finding a matrix of minimum rank from linear measurements. The proposed algorithm as a stochastic gradient descent strategy enjoys a more favorable complexity than full gradients. It also reduces the variance of the stochastic gradient at each iteration and accelerate the rate of convergence. We prove that the proposed algorithm converges linearly in expectation to the solution under a restricted isometry condition. The numerical experiments show that the proposed algorithm has a clearly advantageous balance of efficiency, adaptivity, and accuracy compared with other state-of-the-art greedy algorithms. 

 \end{abstract}

\noindent {\bf Keywords}: Low-rank matrix, affine rank minimization, stochastic variance reduced gradient.

\section{Introduction}
Affine rank minimization problem is a fundamental problem that arises in many practical applications of computer vision, machine learning and signal processing, such as collaborative filtering \cite{Rennie}-\cite{Rao}, image and video processing \cite{Ji}-\cite{Shin}, phaseless signal recovery \cite{Candes}-\cite{Eldar}, communication system \cite{Petropulu}-\cite{Qin}, multi-task learning \cite{Pong}-\cite{RZhang}, etc. Let $X^{\ast}=\{X^{\ast}_{i,j}\}\in\mathbb{R}^{n_{1}\times n_{2}}$ be the ground truth low-rank matrix, and we acquire information about $X^{\ast}$ through a linear mapping $\mathcal {A}:\mathbb{R}^{n_{1}\times n_{2}}\rightarrow\mathbb{R}^{m}$, i.e., $y=\mathcal {A}(X^{\ast})$ or $y_{\ell}=\mathcal {A}_{\ell}(X^{\ast})=\langle A_{\ell},X^{\ast}\rangle$, $\ell=1,\ldots,m$, where $A_{\ell}\in\mathbb{R}^{n_{1}\times n_{2}}$ denote the sensing matrices making up the linear mapping $\mathcal {A}(\cdot)$, then the low-rank matrix minimization problem can be formulated as follows:
\begin{equation}\label{rankformulation0}
\min \limits_{X} \text{rank}(X),\ \  \text{subj. to}\ \  y=\mathcal {A}(X).
\end{equation}

(\ref{rankformulation0}) are clearly combinatorial and computationally intractable. Various computationally efficient algorithms for solving (\ref{rankformulation0}) have been extensively studied. A large majority of algorithms are based on two strategies: convex or non-convex relaxations and greedy iterative algorithms. The renowned advance of relaxations is to replace the optimization problem with the rank function by nuclear norm, namely,
\begin{equation}\label{rankformulation}
\min \limits_{X} \|X\|_{\ast},\ \  \text{subj. to}\ \  y=\mathcal {A}(X).
\end{equation}
For a given $n\times n$ square matrix, Cand$\acute{e}$s et al prove that if the number $m$ of sampled entries satisfies $m\geq Cn^{1.2}r\log n$ for some positive numerical constant $C$, then with very high probability, most $n\times n$ matrices of rank $r$ can be exactly recovered by solving the convex optimization (\ref{rankformulation}) \cite{Recht}. Readers are referred to a series of articles focused on the theoretical analysis \cite{Tao}-\cite{Benjamin} and numerical algorithms \cite{Cai}-\cite{cLin} of the nuclear norm approach.

The singular values indicate clear geometric interpretations and should be regularize differently. As the nuclear norm penalizes each singular value equally, the nuclear norm may not be a good surrogate to the rank function. To get a more accurate and robust approximation to the rank function, a novel method called truncated nuclear norm regularization \cite{YHu,bZhang} is proposed, which only minimized the smallest $p$ singular values to recover the low-rank component. Note that all the existing nonconvex penalty functions are concave and their gradients are decreasing functions, iterative reweighted nuclear norms are proposed to solve low-rank matrix completion \cite{bZhang,Mohan,Fornasier}. Inspired by the paradigm of $\ell_{p}$ quasi-norm ($0<p<1$) in compressive sensing, some try to expand this concept to the traditional nuclear norm \cite{FPNie,YXie}, which can approximate the rank function better.

Alternating minimization \cite{Escalante} is also widely used for affine rank minimization problem. Among these algorithms, a symbolic work, known as the factorization $Z = UV'$, where $U\in\mathbb{R}^{n_{1}\times r}$ and $V\in\mathbb{R}^{n_{2}\times r}$, explicitly optimize on the manifold of rank $r$ matrices. The renowned advance of relaxations is to replace the optimization problem (\ref{rankformulation}) with the following non-convex problem
\begin{equation}\label{fenjie}
\begin{aligned}
\min\limits_{U,V}\frac{1}{2}\|y-\mathcal {A}(UV')\|_{F}^{2}.
\end{aligned}
\end{equation}
Two representatives alternating minimization schemes for solving model (\ref{fenjie}) are the power factorization algorithm \cite{Haldar} and the low-rank matrix fitting algorithm \cite{yinn}. In \cite{Tanner}, the authors propose an alternating steepest descent and a scaled variant scaled alternating steepest descent, where an exact line-search is incorporated to update the solutions of the model (\ref{fenjie}). Yao et al. \cite{Yao} propose a general nonconvex loss instead of $\ell_{1}$ loss to improve robustness of matrix factorization. For a nonconvex function $f(UV')$ w.r.t. $U$ and $V$, the bi-factored gradient descent (BFGD) algorithm, as an efficient first-order method is proposed to operate directly on the $U$, $V$ factors \cite{DYPark}. Li et al. \cite{XiaoLi} study the problem of recovering a low-rank matrix from a number of random linear measurements that are corrupted by outliers, where authors propose a nonsmooth nonconvex formulation of the problem and enforce the low-rank property of the solution by using a factored representation of the matrix variable. An simple iterative algorithm based on a Gauss-Newton is proposed to solve low rank matrix recovery, where a key property of Gauss-Newton Matrix Recovery is that it implicitly keeps the factor matrices approximately balanced throughout its iterations \cite{PZilber}. The authors in \cite{XJiang} formulate matrix completion as a feasibility problem and an alternating projection algorithm is devised to find a feasible point in the intersection of the low-rank constraint set and fidelity constraint set. Scaled gradient descent (ScaledGD) viewed as preconditioned or diagonally-scaled gradient descent has also been developed, where the preconditioners are adaptive and iteration-varying with a minimal computational overhead \cite{TTong}. In addition, Riemannian conjugated gradient method  minimizes the least-square distance on the sampling set over the Riemannian manifold of fixed-rank matrices and the algorithm is an adaptation of classical non-linear conjugate gradients \cite{Vandereycken}.

Greedy algorithms such as iterative hard thresholding (IHT) are another class of popular approaches, one advantage of greedy approaches is that they are considerably low computational complexity. A representative strategy called singular value thresholding, which produces a sequence of matrices, and at each step mainly performs a soft-thresholding operation on the singular values of matrix \cite{SVT}. Similarly, a fast singular value projection algorithm  \cite{PJain} has been proposed where the hard thresholding operator is employed to penalize singular values. It has been shown in \cite{PJain} that if the sensing operator $\mathcal {A}(\cdot)$ satisfies constrained restricted isometry property, then iterative hard thresholding with appropriate constant stepsize is guaranteed to recover any low rank matrix. Tanner et al. \cite{Tannern} introduce an efficient alternating projection algorithm, where the proposed algorithm uses an adaptive stepsize calculated to be exact for a restricted subspace. Furthermore, the authors develop a conjugate gradient iterative hard thresholding family of algorithms, which can balance the low per iteration complexity of simple hard thresholding algorithms with the fast asymptotic convergence rate of employing the conjugate gradient method \cite{Blanchard}. A family of Riemannian optimization algorithms for low rank matrix has also been introduced for low rank matrix recovery, which are first interpreted as iterative hard thresholding algorithms with subspace projections \cite{Kwei}. 

Recent technological advances in data collection and storage raise new challenges in large-scale signal processing problems that essentially involve optimization over particularly large-scale data. Stochastic gradient descent as effective and efficient optimization methods has been widely used for training machine learning models on massive datasets. Stochastic gradient descent (SGD) algorithms have been applied to solve low-rank matrix recovery \cite{Nguyen}, where these algorithms avoid computing the full gradient and possess favorable properties in solving large-scale problems especially when computing the full gradient is expensive or prohibitive. Note that stochastic gradient descent iterates with the inherent variance, which preserves slow convergence asymptotically.  To remedy this problem, stochastic variance reduced gradient (SVRG) \cite{SVRG} has been introduced as an explicit variance reduction strategy for stochastic gradient descent. 
The main aim of this paper is to exploit IHT and SVRG, and 
propose a new algorithm to solve affine rank minimization problem. The advantage of this algorithm is to reduce the variance of the stochastic gradient at each iteration and accelerate the rate of convergence. We prove the proposed algorithm converges linearly for affine rank minimization problem and conduct a series of numerical experiments to illustrate that the proposed algorithms have a clearly advantageous balance of efficiency, adaptivity and accuracy compared with other state-of-the-art algorithms.

%{\color{red} {Considering the following optimization problem:
%
%\begin{equation*}\label{costFunction}
%\begin{aligned}
%F(x)=\frac{1}{m}\sum\limits_{\ell=1}^{m}f_{\ell}(x),
%\end{aligned}
%\end{equation*}
%where $m$ is the sample size, and each $f_{i}:\mathbb{R}^{d}\rightarrow\mathbb{R}$ is the cost function corresponding to the $i$-th sample data. If each $f_{i}(\cdot)$ is $L$-smooth and $F(\cdot)$ is $\mu$-strongly convex, stochastic gradient descent (SGD) enjoys the iteration complexity $\mathcal {O}\left(\kappa \left(1/\varepsilon\right)\right)$ to obtain accuracy of $\varepsilon$, where $\kappa=L/\mu$ is known as the condition number. Furthermore, there has been a recent surge in interest in variance-reduced (VR) stochastic gradient methods whicheduces the variance of the classic stochastic gradient at each iteration and accelerate the rate of convergence. Among them, stochastic variance reduced gradientare (SVRG) is particularly attractive because of their low storage requirement compared with other  variance-reduced (VR) stochastic gradient methods, which require storage of all the gradients of component functions. It has been shown that SVRG possesses iteration complexity $\mathcal {O}\left(\left(m+\kappa \right)\text{log}\left(1/\varepsilon\right)\right)$. In contrast, the overall computational complexity of full gradient descent is $\mathcal {O}\left(m\sqrt{\kappa} \text{log}\left(1/\varepsilon\right)\right)$ \cite{Hanning1,Hanning2}. Thus SVRG presents a significant improvement over full gradient strategy when $\kappa$ and $m$ are large. }} 
 
 \begin{center}
     \begin{tabular}{lp{120mm}}
      \hline
      &{Algorithm~$1$~SVRG for affine rank minimization problem}\\ %\hline
      \hline
      &{\bf Input}: $K$, $n$, $r$, $y$, $\mathcal {A}$, $\epsilon$,$\eta$\\
      &{\bf Output}: $\widehat{X}=\widetilde{X}_{k}$\\
      &{\bf Initialize}: $\widetilde{X}_{0}$\\
      &{\bf for}~~~~$k=0,1,\ldots,K-1$ {\bf do}\\
      &~~~~~~~~~$g_{k}=\frac{1}{m}\sum\limits_{\ell=1}^{m}\nabla f_{\ell}(\widetilde{X}_{k})$\\
    %  &~~~~~~~~~{\bf if} $k>0$ {\bf then}\\
%      &~~~~~~~~~~~~~~~~~~$\eta=\frac{1}{n}\frac{\|\widetilde{X}_{k}-\widetilde{X}_{k-1}\|_{F}^{2}}{\left\langle\widetilde{X}_{k}-\widetilde{X}_{k-1},g_{k}-g_{k-1}\right\rangle}$\\
%      &~~~~~~~~~{\bf end if}\\
      &~~~~~~~~~$X_{0}=\widetilde{X}_{k}$\\
      &~~~~~~~~~{\bf for}~~~~$t=0,\ldots,n-1$~~ {\bf do}\\
      &~~~~~~~~~~~~~~~~~~Randomly pick $i_{t}\in\{1,\ldots,m\}$\\
      &~~~~~~~~~~~~~~~~~~$W_{t}=X_{t}-\eta\left(\nabla f_{i_{t}}\left(X_{t}\right)-\nabla f_{i_{t}}\left(\widetilde{X}_{k}\right)+g_{k}\right)$\\
      &~~~~~~~~~~~~~~~~~~$X_{t+1}=\mathcal {H}_{r}(W_{t})$\\
      &~~~~~~~~~{\bf end for}\\
      &~~~~~~~~~~$\widetilde{X}_{k+1}=X_{n}$ \\
      &~~~~~~~~~~If  $\|y-\mathcal {A}(X_{k+1})\|_{2}^{2}\leq\epsilon$ or $\|\widetilde{X}_{k+1}-\widetilde{X}_{k}\|_{F}^{2}\leq\epsilon$, exit\\
      &{\bf end for}\\
     \hline
     \end{tabular}
\end{center}
\section{SVRG algorithm for affine rank minimization problem}

The cost function $F(x)$ can be defined by
\begin{equation*}\label{costFunction}
\begin{aligned}
F(X)=\frac{1}{m}\|y-\mathcal {A}(X)\|_{2}^{2}=\frac{1}{m}\sum\limits_{\ell=1}^{m}\left(y_{\ell}-\langle A_{\ell},X\rangle\right)^{2}=\frac{1}{m}\sum\limits_{\ell=1}^{m}f_{\ell}(X),
\end{aligned}
\end{equation*}
we perform the following minimization to recover $X^{\ast}$
\begin{equation}\label{l0formulation}
\min \limits_{X} F(X),\ \  \text{subj. to}\ \  \text{rank}(X)\leq r.
\end{equation}
A standard method for solving (\ref{l0formulation}) is gradient descent, which updates the iterations by
\begin{equation*}
\begin{aligned}
X_{t}=X_{t-1}-\eta_{t}\nabla F(X_{t-1})=X_{t-1}-\frac{\eta_{t}}{m}\sum\limits_{\ell=1}^{m}\nabla f_{\ell}(X_{t-1}).
\end{aligned}
\end{equation*}
Note that gradient descent strategy requires evaluation of $m$ derivatives, which is computationally expensive. A
popular modification is stochastic gradient descent, where we can choose a random training sample set $i_{t}$ of size $|i_{t}|$ from $\{1,2,\ldots,m\}$ and the variable is updated by
 \begin{equation*}
\begin{aligned}
X_{t}=X_{t-1}-\eta_{t}\nabla f_{i_{t}}(X_{t-1}),
\end{aligned}
\end{equation*}
%\begin{equation*}
%\begin{aligned}
%\nabla f_{i_{t}}(X_{t-1})=\frac{2}{|i_{t}|}\sum\limits_{\ell\in i_{t}}A_{\ell}\left(\left\langle A_{\ell},X_{t-1}\right\rangle-y_{\ell}\right),
%\end{aligned}
%\end{equation*}
where $f_{i_{t}}(X)=\frac{1}{|i_{t}|}\sum\limits_{\ell\in i_{t}}\left(y_{\ell}-\langle A_{\ell},X\rangle\right)^{2}$. Although the computational cost of stochastic gradient descent is smaller than full gradient descent strategy, it introduces variance due to random selection. In this paper, we employ  stochastic variance reduced gradient (SVRG) \cite{SVRG} to reduce the variance and accelerate convergence rate. 
\begin{figure}[H]
\centering
\begin{tabular}{c}
  % after \\: \hline or \cline{col1-col2} \cline{col3-col4} ...
\includegraphics[width=7.5cm]{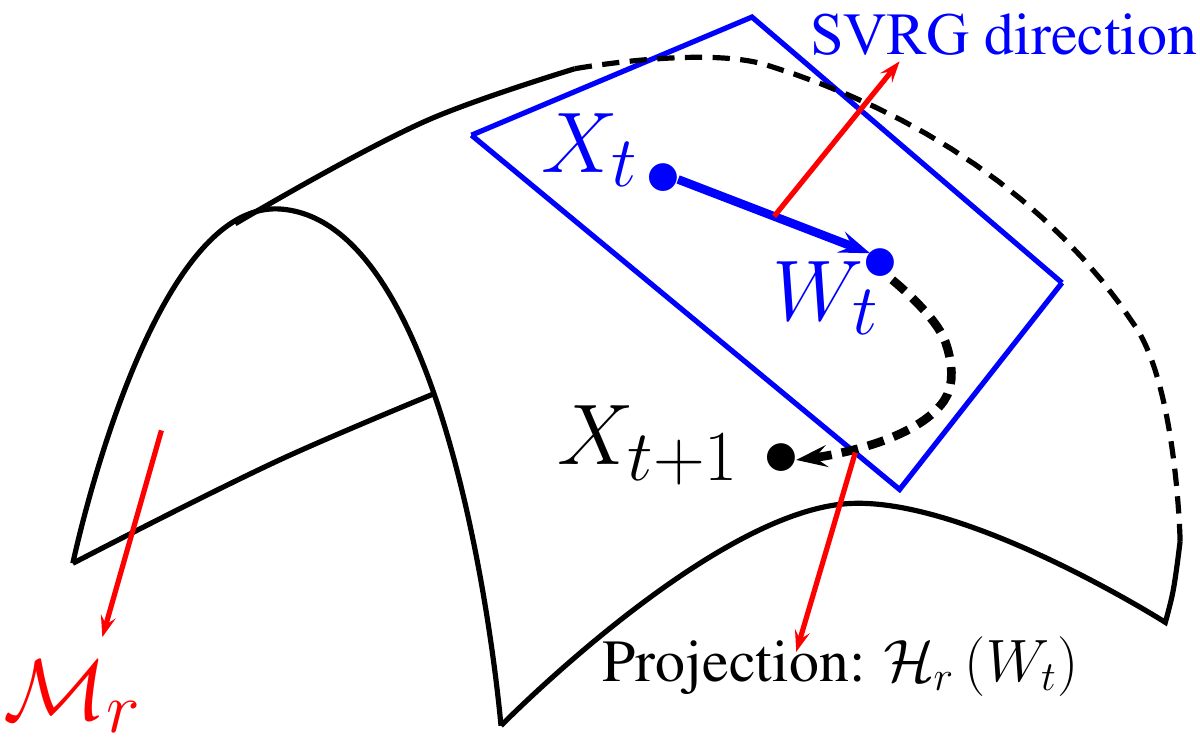}
\end{tabular}
\caption{A geometric description of SVRG algorithm for affine rank minimization problem.}
\label{figure1}
\end{figure}
The proposed stochastic variance reduced gradient for affine rank minimization problem (SVRG-ARM) is provided in Algorithm $1$. The outer loop computes a full gradient $g_{k}$, which is designed to reduce the variance caused by stochastic gradient descent. The inner loop first selects randomly an index set $i_{t}$ from the set $\{1,\ldots,m\}$ and then compute the stochastic variance reduced gradient associated with the selected index set. Note that $\mathbb{E}\left(\nabla f_{i_{t}}\left(X_{t}\right)\right)=g_{k}$ in the inner loop, we can force the gradient to be unbiased by letting gradient as $\nabla f_{i_{t}}\left(X_{t}\right)-\left(\nabla f_{i_{t}}\left(\widetilde{X}_{k}\right)-g_{k}\right)$ and then move the solution along the gradient direction to obtain solution $W_{t}$. The current solution $W_{t}$ needs to be projected onto the constraint space $\mathcal {M}_{r}$ via hard thresholding operator $\mathcal {H}_{r}(W_{t})$, where $\mathcal {M}_{r}=\{X\in\mathbb{R}^{n_{1} \times n_{2}}:\text{rank}(X)=r\}$. Figure \ref{figure1} shows a geometric description of SVRG-ARM.

 % In addition, some strategies for optimizing stepsizes can be incorporated to improve the performance of the proposed algorithm. Motivated by quasi-Newton, we utilize Barzilai-Borwein method \cite{Barzilai,CTan} to automatically compute step sizes. 

\section{Linear Convergence analysis of SVRG-ARM}
In this section, we provide linear convergence analysis of the proposed SVRG-ARM algorithm. 
It should be pointed out that the linear convergence condition is not necessarily optimal at present times, which can be relaxed with perhaps plenty of rooms to improve. We first present the key preliminary results needed in the subsequent analyses.

\begin{definition}\label{erf}
(Restricted isometry property (RIP) \cite{BRecht}). Let $\mathcal {A}(\cdot)$: $\mathbb{R}^{n_{1}\times n_{2}}\rightarrow \mathbb{R}^{m}$ be a linear map of $n_{1}\times n_{2}$ matrices to vectors of length $m$. For every integer $1\leq r\leq$\text{min}$(n_{1},n_{2})$, the restricted isometry constant $\delta_{r}$ of $\mathcal {A}(\cdot)$ is defined as the smallest number such that
\begin{equation}\label{costFunction1}
\begin{aligned}
(1-\delta_{r})\|X\|_{F}^{2}\leq\frac{1}{m}\|\mathcal {A}(X)\|_{2}^{2},
\end{aligned}
\end{equation}
\begin{equation}\label{RIP2}
\begin{aligned}
\frac{1}{|i_{t}|}\|\mathcal {A}_{i_{t}}(X)\|_{2}^{2}\leq(1+\delta_{r})\|X\|_{F}^{2},~~i_{t}\in\{1,\ldots,m\},
\end{aligned}
\end{equation}
holds for all matrices $X$ of rank at most $r$.
\end{definition}

\begin{lemma}\label{lla}
For any two low-rank matrices $X$ and $Y$, let $\Gamma$ be a space spanned by $X$ and $Y$ and the rank of any matrix in $\Gamma$ is at most $s$, then
\begin{equation}\label{wocao}
\begin{aligned}
\langle X-Y,\nabla F(X)-\nabla F(Y)\rangle\geq2\left(1-\delta_{s}\right)\|X-Y\|_{F}^{2},
\end{aligned}
\end{equation}
%\begin{equation}\label{rtyu}
%\begin{aligned}
%\langle X-Y,\nabla F(X)-\nabla F(Y)\rangle\geq F(X)-F(Y)
%\end{aligned}
%\end{equation}
and
\begin{equation}\label{rtyu}
\begin{aligned}
F(Y)\geq F(X)+\langle\nabla F(X),Y-X \rangle+\left(1-\delta_{s}\right)\|X-Y\|_{F}^{2}.
\end{aligned}
\end{equation}
\end{lemma}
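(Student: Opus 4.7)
The plan is to exploit the fact that $F$ is a quadratic form in $X$, so its Taylor expansion around any point is exact, and then to invoke the lower RIP inequality \eref{costFunction1} applied to the rank-$\leq s$ matrix $X-Y$.

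First I would write down the gradient explicitly from the definition of $F$. Since $F(X) = \frac{1}{m}\sum_{\ell} (y_\ell - \langle A_\ell, X\rangle)^2$, a direct differentiation gives $\nabla F(X) = -\frac{2}{m}\mathcal{A}^*(y - \mathcal{A}(X))$, where $\mathcal{A}^*$ is the adjoint of $\mathcal{A}$. Consequently, $\nabla F(X) - \nabla F(Y) = \frac{2}{m}\mathcal{A}^*\mathcal{A}(X-Y)$, and taking the Frobenius inner product with $X-Y$ gives
\begin{equation*}
\langle X-Y,\nabla F(X)-\nabla F(Y)\rangle = \frac{2}{m}\|\mathcal{A}(X-Y)\|_2^2.
\end{equation*}
Since $X,Y\in\Gamma$ and every matrix in $\Gamma$ has rank at most $s$, the difference $X-Y\in\Gamma$ also has rank at most $s$. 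Applying the lower RIP bound \eref{costFunction1} with this rank-$\leq s$ matrix yields $\frac{1}{m}\|\mathcal{A}(X-Y)\|_2^2 \geq (1-\delta_s)\|X-Y\|_F^2$, which immediately gives \eref{wocao}.

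For \eref{rtyu}, I would exploit that $F$ is quadratic so a second-order Taylor expansion around $X$ is exact. A short algebraic expansion of $\|y-\mathcal{A}(Y)\|_2^2 = \|(y-\mathcal{A}(X)) - \mathcal{A}(Y-X)\|_2^2$ gives
\begin{equation*}
F(Y) = F(X) + \langle \nabla F(X), Y-X\rangle + \frac{1}{m}\|\mathcal{A}(Y-X)\|_2^2.
\end{equation*}
Again $Y-X$ has rank at most $s$, so the lower RIP bound shows that the last term is at least $(1-\delta_s)\|Y-X\|_F^2$, giving \eref{rtyu}.

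The argument is almost mechanical once the quadratic structure of $F$ is noted; the only subtle point is verifying the rank bound on $X-Y$ so that the lower RIP inequality can be legitimately applied. This is exactly the reason the hypothesis is phrased in terms of the joint span $\Gamma$ rather than individual rank bounds on $X$ and $Y$ (which would only give rank $\leq 2r$ and would be wasteful): the hypothesis directly guarantees that the difference used in the RIP application is in $\Gamma$. No strong-convexity or Lipschitz-gradient machinery is needed beyond this.
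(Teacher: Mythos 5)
Your proof is correct. For the first inequality \eref{wocao} you do exactly what the paper does: write $\nabla F(X)-\nabla F(Y)=\frac{2}{m}\mathcal{A}^{*}\mathcal{A}(X-Y)$, identify the inner product with $\frac{2}{m}\|\mathcal{A}(X-Y)\|_{2}^{2}$, and apply the lower RIP bound to the rank-$\leq s$ matrix $X-Y\in\Gamma$. For the second inequality \eref{rtyu}, however, you take a genuinely different and arguably cleaner route: you use the fact that $F$ is quadratic, so the second-order expansion
$F(Y)=F(X)+\langle\nabla F(X),Y-X\rangle+\frac{1}{m}\|\mathcal{A}(Y-X)\|_{2}^{2}$
is exact, and then bound the last term from below by $(1-\delta_{s})\|Y-X\|_{F}^{2}$ via RIP. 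The paper instead derives \eref{rtyu} from \eref{wocao} by invoking the equivalence of strong-monotonicity and the strong-convexity lower bound (conditions $(iv)$ and $(iii)$ of Lemma 2 in \cite{XYZhou}). Your direct expansion is self-contained and sidesteps a subtlety in the paper's shortcut, namely that those equivalences are stated for globally strongly convex functions, whereas here the monotonicity estimate is only guaranteed for matrices of rank at most $s$ (one must note that the segment between $X$ and $Y$ stays in $\Gamma$ for the implication to apply); with the exact quadratic identity no such restriction ever enters. Both arguments are valid, and your handling of the rank bound on $X-Y$ through the hypothesis on $\Gamma$ is exactly the point that legitimizes the RIP applications.
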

\begin{proof}
It follows from the RIP that
\begin{equation*}
\begin{aligned}
\langle X-Y,\nabla F(X)-\nabla F(Y)\rangle&=\frac{2}{m}\left\langle X-Y,\sum\limits_{\ell=1}^{m}A_{\ell}\left\langle A_{\ell},X-Y\right\rangle\right\rangle\\
&=\frac{2}{m}\sum\limits_{\ell=1}^{m}\left\langle A_{\ell}, X-Y\right\rangle^{2}\\
&=\frac{2}{m}\|\mathcal {A}(X-Y)\|_{2}^{2}\\
&\geq2\left(1-\delta_{s}\right)\|X-Y\|_{F}^{2}\\
\end{aligned}
\end{equation*}
 The equivalent conditions $(iv)$ and $(iii)$ in {\em{Lemma 2}}  (\cite{XYZhou}) implies
 \begin{equation*}
\begin{aligned}
F(Y)\geq F(X)+\langle\nabla F(X),Y-X \rangle+\left(1-\delta_{s}\right)\|X-Y\|_{F}^{2}.
\end{aligned}
\end{equation*}
\end{proof}

\begin{lemma}\label{leee}
For any two low-rank matrices $X$ and $Y$, let $\Gamma$ be a space spanned by $X$ and $Y$ and the rank of any matrix in $\Gamma$ is at most $s$. Then, we have 
\begin{equation*}
\begin{aligned}
\|\mathcal {P}_{\Gamma}\left(\nabla f_{i_{t}}\left(X\right)-\nabla f_{i_{t}}\left(Y\right)\right)\|_{F} ^{2}\leq2\left(1+\delta_{s}\right)\left\langle X-Y,\nabla f_{i_{t}}(X)-\nabla f_{i_{t}}(Y) \right\rangle.
\end{aligned}
\end{equation*}
\end{lemma}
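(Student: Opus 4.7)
The plan is to unroll both sides in terms of the partial sensing operator $\mathcal{A}_{i_t}$ and then close the loop with one Cauchy--Schwarz step and one application of the RIP upper bound. First, using the explicit form $f_\ell(X)=(y_\ell-\langle A_\ell,X\rangle)^2$, a direct computation gives
\begin{equation*}
\nabla f_{i_t}(X)-\nabla f_{i_t}(Y)=\frac{2}{|i_t|}\sum_{\ell\in i_t}A_\ell\langle A_\ell,X-Y\rangle=\frac{2}{|i_t|}\mathcal{A}_{i_t}^{*}\mathcal{A}_{i_t}(X-Y),
\end{equation*}
so the right-hand side of the claim reduces (via the same identity used at the end of Lemma \ref{lla}) to $\frac{4}{|i_t|}\|\mathcal{A}_{i_t}(X-Y)\|_2^{2}(1+\delta_s)$, modulo the factor I still have to extract.

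Next, set $W:=\mathcal{P}_\Gamma\bigl(\nabla f_{i_t}(X)-\nabla f_{i_t}(Y)\bigr)$. Since $\mathcal{P}_\Gamma$ is a self-adjoint projection and $W\in\Gamma$, I would write
\begin{equation*}
\|W\|_F^{2}=\langle W,\nabla f_{i_t}(X)-\nabla f_{i_t}(Y)\rangle=\frac{2}{|i_t|}\langle \mathcal{A}_{i_t}(W),\mathcal{A}_{i_t}(X-Y)\rangle.
\end{equation*}
Cauchy--Schwarz in $\mathbb{R}^{|i_t|}$ then gives $\|W\|_F^{2}\le\frac{2}{|i_t|}\|\mathcal{A}_{i_t}(W)\|_2\,\|\mathcal{A}_{i_t}(X-Y)\|_2$.

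Now both $W$ and $X-Y$ lie in $\Gamma$, hence are of rank at most $s$, so the upper RIP bound \eref{RIP2} applies to $W$, yielding $\|\mathcal{A}_{i_t}(W)\|_2\le\sqrt{(1+\delta_s)|i_t|}\,\|W\|_F$. Substituting this and dividing through by $\|W\|_F$ (trivially handled when it vanishes) produces
\begin{equation*}
\|W\|_F\le 2\sqrt{1+\delta_s}\,\frac{\|\mathcal{A}_{i_t}(X-Y)\|_2}{\sqrt{|i_t|}};
\end{equation*}
squaring and recognizing $\frac{2}{|i_t|}\|\mathcal{A}_{i_t}(X-Y)\|_2^{2}=\langle X-Y,\nabla f_{i_t}(X)-\nabla f_{i_t}(Y)\rangle$ (from step one) delivers the advertised inequality $\|W\|_F^{2}\le 2(1+\delta_s)\langle X-Y,\nabla f_{i_t}(X)-\nabla f_{i_t}(Y)\rangle$.

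There is no serious obstacle; the only subtlety is ensuring the RIP may be invoked on $W$, which is precisely where the assumption ``$\Gamma$ contains matrices of rank at most $s$'' is used, since $W=\mathcal{P}_\Gamma(\cdot)\in\Gamma$ by construction. Apart from that, the argument is the standard ``co-coercivity on a subspace'' trick, adapted here to the partial-sum operator $\mathcal{A}_{i_t}$ with the one-sided RIP constant $1+\delta_s$.
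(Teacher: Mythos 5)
Your proof is correct, and it takes a genuinely different route from the paper's. The paper proves this co-coercivity bound by the generic smooth-convex-function argument: it first derives the restricted smoothness estimate $\langle \nabla f_{i_{t}}(X)-\nabla f_{i_{t}}(Y),X-Y\rangle\leq 2(1+\delta_{s})\|X-Y\|_{F}^{2}$, then introduces the auxiliary function $h_{i_{t}}(Z)=f_{i_{t}}(Z)-\langle\nabla f_{i_{t}}(X),Z\rangle$, applies a descent-lemma inequality (via the equivalences in Zhou's lemma) at the trial point $Z=Y-\tfrac{1}{2(1+\delta_{s})}\mathcal{P}_{\Gamma}\nabla h_{i_{t}}(Y)$, and symmetrizes in $X$ and $Y$ before summing. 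You instead exploit the explicit quadratic structure of $f_{i_{t}}$: writing $\nabla f_{i_{t}}(X)-\nabla f_{i_{t}}(Y)=\tfrac{2}{|i_{t}|}\mathcal{A}_{i_{t}}^{*}\mathcal{A}_{i_{t}}(X-Y)$, using self-adjointness of $\mathcal{P}_{\Gamma}$ to get $\|W\|_{F}^{2}=\tfrac{2}{|i_{t}|}\langle\mathcal{A}_{i_{t}}(W),\mathcal{A}_{i_{t}}(X-Y)\rangle$, then one Cauchy--Schwarz step and the upper RIP bound \eref{RIP2} applied to $W\in\Gamma$ (rank at most $s$, as you correctly flag), and finally the identity $\tfrac{2}{|i_{t}|}\|\mathcal{A}_{i_{t}}(X-Y)\|_{2}^{2}=\langle X-Y,\nabla f_{i_{t}}(X)-\nabla f_{i_{t}}(Y)\rangle$; the constants check out and yield exactly $2(1+\delta_{s})$. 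Your argument is shorter, avoids any appeal to the external equivalence lemma, and never needs the intermediate Lipschitz estimate \eref{eeee}; its only cost is that it is tied to the least-squares form of $f_{i_{t}}$, whereas the paper's route would carry over verbatim to any convex loss whose gradient is $2(1+\delta_{s})$-Lipschitz on $\Gamma$.
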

\begin{proof}
In view of $f_{i_{t}}(X)=\frac{1}{|i_{t}|}\sum\limits_{\ell\in i_{t}}\left(y_{\ell}-\langle A_{\ell},X\rangle\right)^{2}$ and $\nabla f_{i_{t}}(X)=\frac{2}{|i_{t}|}\sum\limits_{\ell\in i_{t}} A_{\ell}\left(\left\langle A_{\ell},X\right\rangle-y_{\ell}\right)$, we obtain
\begin{equation*}
\begin{aligned}
\frac{1}{|i_{t}|}\|\mathcal {A}_{i_{t}}\left(X-Y\right)\|_{2}^{2}&=\frac{1}{|i_{t}|}\sum\limits_{\ell\in i_{t}}\langle A_{\ell},X-Y\rangle^{2}\\
&=\frac{1}{|i_{t}|}\left\langle\sum\limits_{\ell\in i_{t}} A_{\ell}\left\langle A_{\ell},X-Y\right\rangle,X-Y\right\rangle\\
&=\frac{1}{2}\left\langle \nabla f_{i_{t}}\left(X\right)-\nabla f_{i_{t}}\left(Y\right),X-Y\right\rangle.\\
\end{aligned}
\end{equation*}
Together with with (\ref{RIP2}), we have
\begin{equation}\label{efer}
\begin{aligned}
\left\langle \nabla f_{i_{t}}\left(X\right)-\nabla f_{i_{t}}\left(Y\right),X-Y\right\rangle\leq2(1+\delta_{s})\|X-Y\|_{F}^{2}.
\end{aligned}
\end{equation}
%Together with (\ref{RIP2}), we have
%\begin{equation}\label{equation22}
%\begin{aligned}
%f_{i_{t}}\left(Y\right)-f_{i_{t}}\left(X\right)-\left\langle \nabla f_{i_{t}}\left(X\right),Y-X \right\rangle\leq2(1+\delta_{s})\|X-Y \|_{F}^{2}.
%\end{aligned}
%\end{equation}
Since $f_{i_{t}}(\cdot)$ is a convex function, the equivalent conditions $(3)$ and $(0)$ in {\em{Lemma 4}}  (\cite{XYZhou}) implies
\begin{equation}\label{eeee}
\begin{aligned}
\|\nabla f_{i_{t}}\left(Y\right)-\nabla f_{i_{t}}\left(X\right)\|_{F}\leq 2\left(1+\delta_{s}\right)\|X-Y\|_{F}.
\end{aligned}
\end{equation}
Define the function $h_{i_{t}}(Z)=f_{i_{t}}(Z)-\langle\nabla f_{i_{t}}(X),Z\rangle$ and follow from (\ref{eeee}),
\begin{equation*}
\begin{aligned}
\|\nabla h _{i_{t}}(Z_{1})-\nabla h _{i_{t}}(Z_{2})\|_{F}=\|\nabla f _{i_{t}}(Z_{1})-\nabla f _{i_{t}}(Z_{2})\|_{F}\leq2(1+\delta_{s})\|Z_{1}-Z_{2}\|_{F}
\end{aligned}
\end{equation*}
holds for $\forall$ $Z_{1}, Z_{2}\in\Gamma$. Using the equivalent conditions $(0)$ and $(2)$ in {\em{Lemma 4}} (\cite{XYZhou}) about the convex function $h_{i_{t}}(\cdot)$, we have
\begin{equation}\label{equation123}
\begin{aligned}
h_{i_{t}}\left(Z_{1}\right)-h_{i_{t}}\left(Z_{2}\right)-\left\langle \nabla h_{i_{t}}\left(Z_{2}\right),Z_{1}-Z_{2} \right\rangle\leq(1+\delta_{s})\|Z_{1}-Z_{2}\|_{F}^{2}.
\end{aligned}
\end{equation}
For $\forall Z\in\Gamma$, according to the definitions of $f _{i_{t}}(Z)$, $f _{i_{t}}(X)$, and $\nabla f _{i_{t}}(X)$, we obtain
\begin{equation}\label{equation321}
\begin{aligned}
h _{i_{t}}(Z)-h _{i_{t}}(X)&=f _{i_{t}}(Z)-f _{i_{t}}(X)-\langle\nabla f _{i_{t}}(X),Z-X\rangle\\
&=\frac{1}{|i_{t}|}\sum\limits_{\ell\in i_{t}}\langle A_{\ell}, Z-X\rangle^{2}\geq0.\\
\end{aligned}
\end{equation}
Define $Z=Y-\frac{1}{2\left(1+\delta_{s}\right)}\mathcal {P}_{\Gamma}\nabla h_{i_{t}}\left(Y\right)$. Applying (\ref{equation321}) and  (\ref{equation123}) gives
\begin{equation*}
\begin{aligned}
h _{i_{t}}(X)&\leq h _{i_{t}}(Z)=h _{i_{t}}\left(Y-\frac{1}{2\left(1+\delta_{s}\right)}\mathcal {P}_{\Gamma}\nabla h_{i_{t}}\left(Y\right)\right)\\
&\leq h _{i_{t}}(Y)+\left\langle\nabla h _{i_{t}}(Y),-\frac{1}{2\left(1+\delta_{s}\right)}\mathcal {P}_{\Gamma}\nabla h_{i_{t}}\left(Y\right)\right\rangle+\frac{1}{4\left(1+\delta_{s}\right)}\|\mathcal {P}_{\Gamma}\nabla h_{i_{t}}\left(Y\right)\|_{F}^{2}\\
&=h _{i_{t}}(Y)-\frac{1}{4\left(1+\delta_{s}\right)}\|\mathcal {P}_{\Gamma}\nabla h_{i_{t}}\left(Y\right)\|_{F}^{2}\\
\end{aligned}
\end{equation*}
Based on the definition of $h_{i_{t}}(\cdot)$ and the above inequality, we have 
\begin{equation*}
\begin{aligned}
\frac{1}{4\left(1+\delta_{s}\right)}\|\mathcal {P}_{\Gamma}\nabla h_{i_{t}}\left(Y\right)\|_{F}^{2}&=\frac{1}{4\left(1+\delta_{s}\right)}\|\mathcal {P}_{\Gamma}\left(\nabla f_{i_{t}}\left(Y\right)-\nabla f_{i_{t}}\left(X\right)\right)\|_{F}^{2}\\
&\leq h _{i_{t}}(Y)-h _{i_{t}}(X)=f _{i_{t}}(Y)-f _{i_{t}}(X)-\langle\nabla f_{i_{t}}(X),Y-X\rangle\\
\end{aligned}
\end{equation*}
Similarly, interchanging the role of $Y$ and $X$ leads to
\begin{equation*}
\begin{aligned}
\frac{1}{4\left(1+\delta_{s}\right)}\|\mathcal {P}_{\Gamma}\left(\nabla f_{i_{t}}\left(X\right)-\nabla f_{i_{t}}\left(Y\right)\right)\|_{F}^{2}&\leq f _{i_{t}}(X)-f _{i_{t}}(Y)-\langle\nabla f_{i_{t}}(Y),X-Y\rangle\\
\end{aligned}
\end{equation*}
Taking the summation, we derive 
\begin{equation*}
\begin{aligned}
\|\mathcal {P}_{\Gamma}\left(\nabla f_{i_{t}}\left(X\right)-\nabla f_{i_{t}}\left(Y\right)\right)\|_{F}^{2}&\leq 2\left(1+\delta_{s}\right)\left\langle X-Y,\nabla f_{i_{t}}(X)-\nabla f_{i_{t}}(Y) \right\rangle\\
\end{aligned}
\end{equation*}
\end{proof}

\begin{lemma}\label{eeeee}
For any two low-rank matrices $X$ and $Y$, let $\Gamma$ be a space spanned by $X$ and $Y$ and the rank of any matrix in $\Gamma$ is at most $s$ and $\eta\leq\frac{1}{1+\delta_{s}}$. Then, we have
\begin{equation*}
\begin{aligned}
\|X-Y-\eta \mathcal {P}_{\Gamma}\left(\nabla F(X)-\nabla F(Y)\right)\|_{F}\leq\sqrt{1-2\left(1-\delta_{s}\right)\left(2\eta-2\eta^{2}\left(1+\delta_{s}\right)\right)}\|X-Y\|_{F}.
\end{aligned}
\end{equation*}
\end{lemma}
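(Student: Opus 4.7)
The plan is to expand the squared norm and then invoke the two previous lemmas (the inner-product lower bound from Lemma \ref{lla} and the projected-gradient upper bound from Lemma \ref{leee}) to collapse everything into a single multiple of $\|X-Y\|_F^2$. The key structural observation is that since $X,Y\in\Gamma$ we have $X-Y\in\Gamma$, so $\mathcal{P}_{\Gamma}(X-Y)=X-Y$ and in particular the cross term involving $\mathcal{P}_{\Gamma}$ can be rewritten without the projection.

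Concretely, I will first square the left-hand side and expand:
\begin{equation*}
\begin{aligned}
&\|X-Y-\eta\mathcal{P}_{\Gamma}(\nabla F(X)-\nabla F(Y))\|_F^2\\
&=\|X-Y\|_F^2-2\eta\langle X-Y,\nabla F(X)-\nabla F(Y)\rangle+\eta^2\|\mathcal{P}_{\Gamma}(\nabla F(X)-\nabla F(Y))\|_F^2,
\end{aligned}
\end{equation*}
where in the cross term I used $\langle X-Y,\mathcal{P}_{\Gamma}(\cdot)\rangle=\langle\mathcal{P}_{\Gamma}(X-Y),\cdot\rangle=\langle X-Y,\cdot\rangle$. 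Next I apply the analogue of Lemma \ref{leee} for the full gradient $\nabla F$ (which follows by the same argument, taking $i_t=\{1,\ldots,m\}$, or equivalently by averaging over $\ell$) to bound the last term by $2(1+\delta_s)\langle X-Y,\nabla F(X)-\nabla F(Y)\rangle$. This yields
\begin{equation*}
\|X-Y-\eta\mathcal{P}_{\Gamma}(\nabla F(X)-\nabla F(Y))\|_F^2\leq\|X-Y\|_F^2-\bigl(2\eta-2\eta^2(1+\delta_s)\bigr)\langle X-Y,\nabla F(X)-\nabla F(Y)\rangle.
\end{equation*}

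The hypothesis $\eta\leq\tfrac{1}{1+\delta_s}$ is exactly what guarantees that the coefficient $2\eta-2\eta^2(1+\delta_s)$ is nonnegative, which is necessary so that applying the lower bound \eref{wocao} from Lemma \ref{lla}, namely $\langle X-Y,\nabla F(X)-\nabla F(Y)\rangle\geq 2(1-\delta_s)\|X-Y\|_F^2$, preserves the direction of the inequality. Substituting this bound gives
\begin{equation*}
\|X-Y-\eta\mathcal{P}_{\Gamma}(\nabla F(X)-\nabla F(Y))\|_F^2\leq\bigl[1-2(1-\delta_s)\bigl(2\eta-2\eta^2(1+\delta_s)\bigr)\bigr]\|X-Y\|_F^2,
\end{equation*}
and taking square roots yields the stated inequality.

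The main obstacle is mostly bookkeeping rather than deep: I need to make sure the projection $\mathcal{P}_{\Gamma}$ is handled correctly in the inner product (using $X-Y\in\Gamma$), to justify using Lemma \ref{leee} for $\nabla F$ rather than just $\nabla f_{i_t}$, and to track the sign of the coefficient so that the stepsize constraint $\eta\leq 1/(1+\delta_s)$ plays the role of keeping the contraction factor under the square root nonnegative. A minor but worth-mentioning point is that the rank of $\nabla F(X)-\nabla F(Y)$ need not be bounded by $s$, which is why the projection $\mathcal{P}_{\Gamma}$ is essential in order to apply the RIP-based estimates only to matrices living in $\Gamma$.
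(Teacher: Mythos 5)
Your proposal is correct and follows essentially the same route as the paper's own proof: expand the squared norm, bound the $\eta^{2}\|\mathcal {P}_{\Gamma}(\nabla F(X)-\nabla F(Y))\|_{F}^{2}$ term via Lemma \ref{leee} with $i_{t}=\{1,\ldots,m\}$, merge the cross term using $X-Y\in\Gamma$, and finish with the lower bound \eref{wocao} of Lemma \ref{lla}, the condition $\eta\leq\frac{1}{1+\delta_{s}}$ keeping the coefficient $2\eta-2\eta^{2}(1+\delta_{s})$ nonnegative. Your explicit remarks on the projection identity and the sign of that coefficient only make precise steps the paper leaves implicit.
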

\begin{proof}
\begin{equation*}
\begin{aligned}
&\|X-Y-\eta \mathcal {P}_{\Gamma}\left(\nabla F(X)-\nabla F(Y)\right)\|_{F}^{2}\\
&=\|X-Y\|_{F}^{2}+\eta^{2}\|\mathcal {P}_{\Gamma}\left(\nabla F(X)-\nabla F(Y)\right)\|_{F}^{2}-2\eta\langle X-Y,\mathcal {P}_{\Gamma}\left(\nabla F(X)-\nabla F(Y)\right)\rangle\\
&\leq\|X-Y\|_{F}^{2}+2\eta^{2}\left(1+\delta_{s}\right)\left\langle X-Y,\nabla F(X)-\nabla F(Y) \right\rangle-2\eta\langle X-Y,\mathcal {P}_{\Gamma}\left(\nabla F(X)-\nabla F(Y)\right)\rangle\\
&=\|X-Y\|_{F}^{2}-\left(2\eta-2\eta^{2}\left(1+\delta_{s}\right)\right)\left\langle X-Y,\nabla F(X)-\nabla F(Y) \right\rangle\\
&\leq\|X-Y\|_{F}^{2}-2\left(1-\delta_{s}\right)\left(2\eta-2\eta^{2}\left(1+\delta_{s}\right)\right)\|X-Y\|_{F}^{2}\\
&=\left[1-2\left(1-\delta_{s}\right)\left(2\eta-2\eta^{2}\left(1+\delta_{s}\right)\right)\right]\|X-Y\|_{F}^{2},\\
\end{aligned}
\end{equation*}
where the first inequality follows from {\em Lemma \ref{leee}} with taking $i_{t}=\{1,\ldots,m\}$ and the last inequality follows from {\em Lemma \ref{lla}}.
\end{proof}

\begin{lemma}\label{rtr}
For any two low-rank matrices $X$ and $Y$, let $\Gamma$ be a space spanned by $X$ and $Y$ and the rank of any matrix in $\Gamma$ is at most $s$. Denote $i_{t}$ be the index randomly selected from $\{1,\ldots,m\}$ and $\eta\leq\frac{1}{1+\delta_{s}}$, then we have
\begin{equation*}
\begin{aligned}
\mathbb{E}_{i_{t}}\|X-Y-\eta\mathcal {P}_{\Gamma}\left(\nabla f_{i_{t}}\left(X\right)-\nabla f_{i_{t}}\left(Y\right)\right)\|_{F}\leq\sqrt{1-2\left(1-\delta_{s}\right)\left(2\eta-2\eta^{2}\left(1+\delta_{s}\right)\right)}\|X-Y\|_{F} \\
\end{aligned}
\end{equation*}
\end{lemma}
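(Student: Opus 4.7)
The plan is to mimic the proof of Lemma \ref{eeeee} verbatim, except evaluating everything inside an expectation over $i_t$ and invoking Jensen's inequality at the very end to pass from the squared norm to the norm.

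First I would square the quantity inside the expectation and expand it as
\begin{equation*}
\begin{aligned}
\|X-Y-\eta\mathcal{P}_\Gamma(\nabla f_{i_t}(X)-\nabla f_{i_t}(Y))\|_F^2
&= \|X-Y\|_F^2 + \eta^2\|\mathcal{P}_\Gamma(\nabla f_{i_t}(X)-\nabla f_{i_t}(Y))\|_F^2 \\
&\quad - 2\eta\langle X-Y,\mathcal{P}_\Gamma(\nabla f_{i_t}(X)-\nabla f_{i_t}(Y))\rangle.
\end{aligned}
\end{equation*}
For the middle term, Lemma \ref{leee} (applied directly to $f_{i_t}$) gives the pointwise bound
$$\|\mathcal{P}_\Gamma(\nabla f_{i_t}(X)-\nabla f_{i_t}(Y))\|_F^2 \leq 2(1+\delta_s)\langle X-Y,\nabla f_{i_t}(X)-\nabla f_{i_t}(Y)\rangle.$$
For the cross term, the key observation (exactly as in Lemma \ref{eeeee}) is that $X,Y\in\Gamma$, so $\mathcal{P}_\Gamma(X-Y)=X-Y$ and the projection may be moved off the gradient.

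Next I would take expectation over $i_t$. Since $i_t$ is uniform on $\{1,\dots,m\}$, we have $\mathbb{E}_{i_t}[\nabla f_{i_t}(Z)] = \nabla F(Z)$ for any deterministic $Z$; in particular this is what lets both the cross term and the RHS of the Lemma~\ref{leee} bound collapse to quantities involving $\nabla F$. Combining these two observations yields
\begin{equation*}
\begin{aligned}
\mathbb{E}_{i_t}\|X-Y-\eta\mathcal{P}_\Gamma(\nabla f_{i_t}(X)-\nabla f_{i_t}(Y))\|_F^2
\leq \|X-Y\|_F^2 - \bigl(2\eta-2\eta^2(1+\delta_s)\bigr)\langle X-Y,\nabla F(X)-\nabla F(Y)\rangle.
\end{aligned}
\end{equation*}
Here I use $\eta\leq 1/(1+\delta_s)$ to ensure the coefficient $2\eta-2\eta^2(1+\delta_s)$ is nonnegative, so that applying Lemma \ref{lla} to bound the inner product from below gives
$$\mathbb{E}_{i_t}\|X-Y-\eta\mathcal{P}_\Gamma(\nabla f_{i_t}(X)-\nabla f_{i_t}(Y))\|_F^2 \leq \bigl[1-2(1-\delta_s)(2\eta-2\eta^2(1+\delta_s))\bigr]\|X-Y\|_F^2.$$

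Finally, since the claim is stated for $\mathbb{E}_{i_t}\|\cdot\|_F$ rather than $\mathbb{E}_{i_t}\|\cdot\|_F^2$, I apply Jensen's inequality (concavity of $\sqrt{\cdot}$) to convert the squared-norm bound to the desired norm bound. The only subtle step is handling the cross term: one must write $\langle X-Y,\mathcal{P}_\Gamma(\nabla f_{i_t}(X)-\nabla f_{i_t}(Y))\rangle=\langle X-Y,\nabla f_{i_t}(X)-\nabla f_{i_t}(Y)\rangle$ using $X-Y\in\Gamma$ before taking the expectation, so that unbiasedness of the stochastic gradient can be invoked cleanly; everything else is an exact repeat of Lemma \ref{eeeee}.
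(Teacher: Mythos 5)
Your proposal is correct and follows the paper's own proof essentially verbatim: expand the squared norm, bound the quadratic term with Lemma \ref{leee}, drop the projection in the cross term since $X-Y\in\Gamma$, use unbiasedness of $\nabla f_{i_t}$ together with Lemma \ref{lla}, and finish with Jensen's inequality. No gaps.
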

\begin{proof}
\begin{equation*}
\begin{aligned}
&\mathbb{E}_{i_{t}}\|X-Y-\eta\mathcal {P}_{\Gamma}\left(\nabla f_{i_{t}}\left(X\right)-\nabla f_{i_{t}}\left(Y\right)\right)\|_{F}^{2} \\
&=\|X-Y\|_{F}^{2}+\eta^{2}\mathbb{E}_{i_{t}}\|\mathcal {P}_{\Gamma}\left(\nabla f_{i_{t}}\left(X\right)-\nabla f_{i_{t}}\left(Y\right)\right)\|_{F}^{2}-2\eta\mathbb{E}_{i_{t}}\langle X-Y,\mathcal {P}_{\Gamma}\left(\nabla f_{i_{t}}\left(X\right)-\nabla f_{i_{t}}\left(Y\right)\right)\rangle\\
&\leq\|X-Y\|_{F}^{2}+ 2\eta^{2}\left(1+\delta_{s}\right)\mathbb{E}_{i_{t}}\left\langle X-Y,\nabla f_{i_{t}}(X)-\nabla f_{i_{t}}(Y) \right\rangle-2\eta\mathbb{E}_{i_{t}}\langle X-Y,\nabla f_{i_{t}}\left(X\right)-\nabla f_{i_{t}}\left(Y\right)\rangle\\
&=\|X-Y\|_{F}^{2}-\left(2\eta-2\eta^{2}\left(1+\delta_{s}\right)\right)\mathbb{E}_{i_{t}}\left\langle X-Y,\nabla f_{i_{t}}(X)-\nabla f_{i_{t}}(Y) \right\rangle\\
&=\|X-Y\|_{F}^{2}-\left(2\eta-2\eta^{2}\left(1+\delta_{s}\right)\right)\left\langle X-Y,\nabla F(X)-\nabla F(Y) \right\rangle\\
&\leq\|X-Y\|_{F}^{2}-2\left(1-\delta_{s}\right)\left(2\eta-2\eta^{2}\left(1+\delta_{s}\right)\right)\|X-Y\|_{F}^{2}\\
\end{aligned}
\end{equation*}
where the first inequality is based on {\em Lemma \ref{leee}}, the last equality follows from the fact that $\nabla f_{i_{t}}(X)$ is an unbiased estimation to $\nabla F$, i.e., $\mathbb{E}[\nabla f_{i_{t}}(x_{t})|x_{t}]=\nabla F(x_{t})$ and the last inequality follows from {\em Lemma \ref{lla}}. The desired result follows by applying Jensen inequality $(\mathbb{E}Z)^{2}\leq\mathbb{E}(Z)^{2}$. 
\end{proof}

\begin{theorem}
Assume that $X^{\ast}$ is the optimal solution to (\ref{l0formulation}), the linear mapping $\mathcal {A}$ satisfies RIP defined in {\em Definition \ref{erf}} with $\delta_{3r}\leq\frac{1}{71}$, and the step size satisfies
\begin{equation*}
\begin{aligned}
\frac{6-6\delta_{3r}-\sqrt{71\delta_{3r}^{2}-72\delta_{3r}+1}}{12-12\delta_{3r}^{2}}<\eta<\frac{6-6\delta_{3r}+\sqrt{71\delta_{3r}^{2}-72\delta_{3r}+1}}{12-12\delta_{3r}^{2}}
\end{aligned}
\end{equation*}
then SVRG-ARM converges linearly in expectation:
\begin{equation*}
\begin{aligned}
\mathbb{E}_{i_{t}}\|\widetilde{X}_{k}-X^{\ast}\|_{F}\leq\kappa_{3r}^{k}\|\widetilde{X}_{0}-X^{\ast}\|_{F}\\
\end{aligned}
\end{equation*}
where $\rho_{3r}=2\sqrt{1-2\left(1-\delta_{3r}\right)\left(2\eta-2\eta^{2}\left(1+\delta_{3r}\right)\right)}$ and $\kappa_{3r}=\frac{-3\rho_{3r}^{n+1}+\rho_{3r}^{n}+2\rho_{3r}}{1-\rho_{3r}}<1.$
\end{theorem}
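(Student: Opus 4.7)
The plan is to establish a one-step affine bound within the inner loop, unroll it over $n$ iterations (which start at $X_{0}=\widetilde{X}_{k}$), and then iterate in the outer index $k$. Working conditional on the outer iterate $\widetilde{X}_{k}$, write $e_{t}:=\mathbb{E}\|X_{t}-X^{\ast}\|_{F}$, so that $e_{0}=\|\widetilde{X}_{k}-X^{\ast}\|_{F}$; the target one-step inequality is
\begin{equation*}
e_{t+1}\le \rho_{3r}\,e_{t}+2\rho_{3r}\,\|\widetilde{X}_{k}-X^{\ast}\|_{F}.
\end{equation*}
Unrolling yields $e_{n}\le \bigl[\rho_{3r}^{n}+2\rho_{3r}(1-\rho_{3r}^{n})/(1-\rho_{3r})\bigr]\|\widetilde{X}_{k}-X^{\ast}\|_{F}$; clearing the common denominator recovers exactly the stated $\kappa_{3r}$, and since $\widetilde{X}_{k+1}=X_{n}$, iteration in $k$ gives the theorem's geometric rate.

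For the one-step bound, let $\Gamma$ be a subspace of $\mathbb{R}^{n_{1}\times n_{2}}$ of rank at most $3r$ containing $X_{t}$, $X^{\ast}$, $X_{t+1}$ and $\widetilde{X}_{k}$. Since $X_{t+1}=\mathcal{H}_{r}(W_{t})$ is a best rank-$r$ approximation of $W_{t}$ and $X^{\ast}$ has rank $\le r$, a standard IHT comparison gives $\|X_{t+1}-X^{\ast}\|_{F}\le 2\|\mathcal{P}_{\Gamma}(W_{t}-X^{\ast})\|_{F}$. The exact data $y=\mathcal{A}(X^{\ast})$ force $\nabla f_{i_{t}}(X^{\ast})=0=\nabla F(X^{\ast})$; substituting $g_{k}=\nabla F(\widetilde{X}_{k})$ then splits the SVRG update into a signal piece $A_{t}$ and a centered noise piece $B_{t}$,
\begin{equation*}
W_{t}-X^{\ast}=\underbrace{(X_{t}-X^{\ast})-\eta\bigl(\nabla f_{i_{t}}(X_{t})-\nabla f_{i_{t}}(X^{\ast})\bigr)}_{A_{t}}+\underbrace{\eta\bigl[(\nabla f_{i_{t}}(\widetilde{X}_{k})-\nabla f_{i_{t}}(X^{\ast}))-(\nabla F(\widetilde{X}_{k})-\nabla F(X^{\ast}))\bigr]}_{B_{t}},
\end{equation*}
so $\mathbb{E}_{i_{t}}B_{t}=0$, and Lemma \ref{rtr} applied to $(X_{t},X^{\ast})$ yields $\mathbb{E}_{i_{t}}\|\mathcal{P}_{\Gamma}A_{t}\|_{F}\le \alpha_{3r}\|X_{t}-X^{\ast}\|_{F}$ with $\alpha_{3r}:=\rho_{3r}/2$.

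The decisive step is the noise bound. I would rewrite
\begin{equation*}
\mathcal{P}_{\Gamma}B_{t}=\bigl[(\widetilde{X}_{k}-X^{\ast})-\eta\mathcal{P}_{\Gamma}(\nabla F(\widetilde{X}_{k})-\nabla F(X^{\ast}))\bigr]-\bigl[(\widetilde{X}_{k}-X^{\ast})-\eta\mathcal{P}_{\Gamma}(\nabla f_{i_{t}}(\widetilde{X}_{k})-\nabla f_{i_{t}}(X^{\ast}))\bigr],
\end{equation*}
i.e., as the difference of a deterministic and a stochastic projected gradient step anchored at $(\widetilde{X}_{k},X^{\ast})$. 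Lemma \ref{eeeee} bounds the first bracket by $\alpha_{3r}\|\widetilde{X}_{k}-X^{\ast}\|_{F}$ deterministically; Lemma \ref{rtr} bounds the expected norm of the second bracket by the same quantity. The triangle inequality therefore gives $\mathbb{E}_{i_{t}}\|\mathcal{P}_{\Gamma}B_{t}\|_{F}\le 2\alpha_{3r}\|\widetilde{X}_{k}-X^{\ast}\|_{F}$, and combining with the $A_{t}$ estimate through the IHT inequality delivers the targeted recursion with $\rho_{3r}=2\alpha_{3r}$. Taking total expectation and inducting in $k$ completes the proof. Finally, enforcing $\kappa_{3r}<1$ reduces (after substituting $\rho_{3r}$) to $\rho_{3r}<1/3$, which is a quadratic inequality in $\eta$ whose discriminant equals $(1-\delta_{3r})(1-71\delta_{3r})=71\delta_{3r}^{2}-72\delta_{3r}+1$, giving both the admissible step-size window and the bound $\delta_{3r}\le 1/71$.

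The main obstacle is the variance bound on $\|\mathcal{P}_{\Gamma}B_{t}\|_{F}$. A naive route that bounds the variance directly through the second moment of $\nabla f_{i_{t}}(\widetilde{X}_{k})-\nabla f_{i_{t}}(X^{\ast})$ via Lemma \ref{leee} produces a noise coefficient of the form $2\eta(1+\delta_{3r})$ rather than the required $2\alpha_{3r}$, and the resulting bound does not collapse to the stated closed form of $\kappa_{3r}$. The difference-of-residuals reformulation above is what makes the contraction factor $\alpha_{3r}$ reappear in the noise term so that Lemmas \ref{eeeee} and \ref{rtr} apply with matching shapes. The secondary bookkeeping point is choosing $\Gamma$ so that $X_{t},X_{t+1},X^{\ast},\widetilde{X}_{k}$ are all accommodated under a single rank-$3r$ RIP invocation, which has to be reconciled with the span condition used in Lemmas \ref{eeeee} and \ref{rtr}.
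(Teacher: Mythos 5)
Your strategy is essentially the paper's: the best rank-$r$ approximation property gives $\|X_{t+1}-X^{\ast}\|_{F}^{2}\leq 2\langle X_{t+1}-X^{\ast},W_{t}-X^{\ast}\rangle$; your $A_{t}+B_{t}$ decomposition (using $\nabla f_{i_{t}}(X^{\ast})=0$, $\nabla F(X^{\ast})=0$, $g_{k}=\nabla F(\widetilde{X}_{k})$) is exactly the paper's three-term split, with $B_{t}$ handled as the difference of the stochastic and deterministic residuals anchored at $(\widetilde{X}_{k},X^{\ast})$; Lemmas \ref{rtr} and \ref{eeeee} then give the same one-step recursion $\mathbb{E}_{i_{t}}\|X_{t+1}-X^{\ast}\|_{F}\leq\rho_{3r}\|X_{t}-X^{\ast}\|_{F}+2\rho_{3r}\|\widetilde{X}_{k}-X^{\ast}\|_{F}$, and your unrolling reproduces $\kappa_{3r}$. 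Your reduction of $\kappa_{3r}<1$ to $\rho_{3r}<1/3$ and the identification of the discriminant (up to a positive factor) with $(1-\delta_{3r})(1-71\delta_{3r})=71\delta_{3r}^{2}-72\delta_{3r}+1$ are correct, and are in fact spelled out more explicitly than in the paper, which simply asserts $\kappa_{3r}<1$.

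The genuine gap is the point you flagged and deferred: you require a single subspace $\Gamma$ containing $X_{t}$, $X_{t+1}$, $X^{\ast}$ and $\widetilde{X}_{k}$ all of whose elements have rank at most $3r$. Such a $\Gamma$ need not exist, since a linear combination of four rank-$r$ matrices can have rank $4r$; as written, your argument therefore only yields the theorem with $\delta_{4r}$ in place of $\delta_{3r}$ (and a correspondingly different step-size window). The paper avoids this by not projecting $W_{t}-X^{\ast}$ in one piece: it first splits the inner product $\langle X_{t+1}-X^{\ast},W_{t}-X^{\ast}\rangle$ into the three terms and only then inserts projections, using two different subspaces, $\Omega_{t}$ spanned by $X_{t+1},X_{t},X^{\ast}$ for the $A_{t}$ term and $\Omega_{t}'$ spanned by $X_{t+1},\widetilde{X}_{k},X^{\ast}$ for the two pieces of $B_{t}$; each has rank at most $3r$, the insertion is legitimate because $X_{t+1}-X^{\ast}$ lies in both, and Cauchy--Schwarz plus cancellation of $\|X_{t+1}-X^{\ast}\|_{F}$ then delivers exactly your target recursion. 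With that repair your proof coincides with the paper's. One further small omission: Lemmas \ref{rtr} and \ref{eeeee} require $\eta\leq\frac{1}{1+\delta_{3r}}$, which must be (and in the paper is) verified to follow from the upper endpoint of the stated step-size interval.
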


\begin{proof}
Note that Eckart–Young theorem guarantees that $X_{t+1}$ is the rank $r$ matrix nearest to $W_{t}$ in the Frobenius norm, we have $\|X_{t+1}-W_{t}\|_{F}^{2}\leq\|X^{\ast}-W_{t}\|_{F}^{2}$. It follows that
\begin{equation*}
\begin{aligned}
\|X_{t+1}-X^{\ast}\|_{F}^{2}&=\|X_{t+1}-X^{\ast}+X^{\ast}-W_{t}\|_{F}^{2}-\|X^{\ast}-W_{t}\|_{F}^{2}-2\langle X_{t+1}-X^{\ast},X^{\ast}-W_{t}\rangle\\
&=\|X_{t+1}-W_{t}\|_{F}^{2}-\|X^{\ast}-W_{t}\|_{F}^{2}-2\langle X_{t+1}-X^{\ast},X^{\ast}-W_{t}\rangle\\
&\leq2\langle X_{t+1}-X^{\ast},W_{t}-X^{\ast}\rangle\\
&=2\left\langle X_{t+1}-X^{\ast},X_{t}-\eta\left(\nabla f_{i_{t}}\left(X_{t}\right)-\nabla f_{i_{t}}\left(\widetilde{X}_{k}\right)+g_{k}\right)-X^{\ast}\right\rangle\\
&=2\left\langle  X_{t+1}-X^{\ast},X_{t}-X^{\ast}-\eta\left(\nabla f_{i_{t}}\left(X_{t}\right)-\nabla f_{i_{t}}\left(X^{\ast}\right)\right)\right\rangle\\
&-2\left\langle  X_{t+1}-X^{\ast},\widetilde{X}_{k}-X^{\ast}-\eta\left(\nabla f_{i_{t}}\left(\widetilde{X}_{k}\right)-\nabla f_{i_{t}}\left(X^{\ast}\right)\right)\right\rangle\\
&+2\left\langle  X_{t+1}-X^{\ast},\widetilde{X}_{k}-X^{\ast}-\eta\left(\nabla F\left(\widetilde{X}_{k}\right)-\nabla F\left(X^{\ast}\right)\right)\right\rangle\\
\end{aligned}
\end{equation*}
where the fourth equality follows from $g_{k}=\nabla F\left(\widetilde{X}_{k}\right)$ and $\nabla F\left(X^{\ast}\right)=0$.
Denote $\Omega_{t}$ as the subspace spanned by $X_{t+1}$, $X_{t}$ and $X^{\ast}$ and $\Omega_{t}' $ as the subspace spanned by $X_{t+1}$, $\widetilde{X}_{k}$ and $X^{\ast}$. Define $\mathcal {P}_{\Omega_{t}}:\mathbb{R}^{n_{1}\times n_{2}}\rightarrow \Omega_{t}$ as the orthogonal projection onto $\Omega_{t}$. Obviously, $\mathcal {P}_{\Omega_{t}}(X_{t+1})=X_{t+1}$, $\mathcal {P}_{\Omega_{t}}(X_{t})=X_{t}$, $\mathcal {P}_{\Omega_{t}}(X^{\ast})=X^{\ast}$, $\mathcal {P}_{\Omega_{t}'}(X_{t+1})=X_{t+1}$, $\mathcal {P}_{\Omega_{t}' }(\widetilde{X}_{k})=\widetilde{X}_{k}$ and $\mathcal {P}_{\Omega_{t}'}(X^{\ast})=X^{\ast}$. The rank of any matrix in $\Omega_{t}$ and $\Omega_{t}' $ is at most $3r$. Consequently, we obtain
\begin{equation*}
\begin{aligned}
\|X_{t+1}-X^{\ast}\|_{F}^{2}
&\leq2\left\langle  X_{t+1}-X^{\ast},X_{t}-X^{\ast}-\eta\mathcal {P}_{\Omega_{t}}\left(\nabla f_{i_{t}}\left(X_{t}\right)-\nabla f_{i_{t}}\left(X^{\ast}\right)\right)\right\rangle\\
&-2\left\langle  X_{t+1}-X^{\ast},\widetilde{X}_{k}-X^{\ast}-\eta\mathcal {P}_{\Omega_{t}'}\left(\nabla f_{i_{t}}\left(\widetilde{X}_{k}\right)-\nabla f_{i_{t}}\left(X^{\ast}\right)\right)\right\rangle\\
&+2\left\langle  X_{t+1}-X^{\ast},\widetilde{X}_{k}-X^{\ast}-\eta\mathcal {P}_{\Omega_{t}'}\left(\nabla F\left(\widetilde{X}_{k}\right)-\nabla F\left(X^{\ast}\right)\right)\right\rangle\\
&\leq2\|X_{t+1}-X^{\ast}\|_{F}(\|X_{t}-X^{\ast}-\eta{P}_{\Omega_{t}}\left(\nabla f_{i_{t}}\left(X_{t}\right)-\nabla f_{i_{t}}\left(X^{\ast}\right)\right)\|_{F}\\
&+\|\widetilde{X}_{k}-X^{\ast}-\eta{P}_{\Omega_{t}'}\left(\nabla f_{i_{t}}\left(\widetilde{X}_{k}\right)-\nabla f_{i_{t}}\left(X^{\ast}\right)\right)\|_{F}\\
&+\|\widetilde{X}_{k}-X^{\ast}-\eta{P}_{\Omega_{t}'}\left(\nabla F\left(\widetilde{X}_{k}\right)-\nabla F\left(X^{\ast}\right)\right)\|_{F}).\\ 
\end{aligned}
\end{equation*}
Canceling $\|X_{t+1}-X^{\ast}\|_{F}$ in the above inequality gives the inequality
\begin{equation}\label{equation11}
\begin{aligned}
\|X_{t+1}-X^{\ast}\|_{F}
&\leq2(\|X_{t}-X^{\ast}-\eta{P}_{\Omega_{t}}\left(\nabla f_{i_{t}}\left(X_{t}\right)-\nabla f_{i_{t}}\left(X^{\ast}\right)\right)\|_{F}\\
&+\|\widetilde{X}_{k}-X^{\ast}-\eta{P}_{\Omega_{t}'}\left(\nabla f_{i_{t}}\left(\widetilde{X}_{k}\right)-\nabla f_{i_{t}}\left(X^{\ast}\right)\right)\|_{F}\\
&+\|\widetilde{X}_{k}-X^{\ast}-\eta{P}_{\Omega_{t}'}\left(\nabla F\left(\widetilde{X}_{k}\right)-\nabla F\left(X^{\ast}\right)\right)\|_{F}),\\
\end{aligned}
\end{equation}
As defined $\eta<\frac{6-6\delta_{3r}+\sqrt{71\delta_{3r}^{2}-72\delta_{3r}+1}}{12-12\delta_{3r}^{2}}<\frac{1}{1+\delta_{3r}}$, note that $i_{t}$ determines the solution $X_{t+1}$, taking the expectation on both sides of (\ref{equation11}) yields,
\begin{equation*}
\begin{aligned}
\mathbb{E}_{i_{t}}\|X_{t+1}-X^{\ast}\|_{F}
&\leq2(\mathbb{E}_{i_{t}}\|X_{t}-X^{\ast}-\eta{P}_{\Omega_{t}}\left(\nabla f_{i_{t}}\left(X_{t}\right)-\nabla f_{i_{t}}\left(X^{\ast}\right)\right)\|_{F}\\
&+\mathbb{E}_{i_{t}}\|\widetilde{X}_{k}-X^{\ast}-\eta{P}_{\Omega_{t}'}\left(\nabla f_{i_{t}}\left(\widetilde{X}_{k}\right)-\nabla f_{i_{t}}\left(X^{\ast}\right)\right)\|_{F}\\
&+\|\widetilde{X}_{k}-X^{\ast}-\eta{P}_{\Omega_{t}'}\left(\nabla F\left(\widetilde{X}_{k}\right)-\nabla F\left(X^{\ast}\right)\right)\|_{F})\\
&\leq2(\sqrt{1-2\left(1-\delta_{3r}\right)\left(2\eta-2\eta^{2}\left(1+\delta_{3r}\right)\right)}\|X_{t}-X^{\ast}\|_{F} \\
&+\sqrt{1-2\left(1-\delta_{3r}\right)\left(2\eta-2\eta^{2}\left(1+\delta_{3r}\right)\right)}\|\widetilde{X}_{k}-X^{\ast}\|_{F} \\
&+\sqrt{1-2\left(1-\delta_{3r}\right)\left(2\eta-2\eta^{2}\left(1+\delta_{3r}\right)\right)}\|\widetilde{X}_{k}-X^{\ast}\|_{F})\\
&=2\sqrt{1-2\left(1-\delta_{3r}\right)\left(2\eta-2\eta^{2}\left(1+\delta_{3r}\right)\right)}\|X_{t}-X^{\ast}\|_{F}\\
&+4\sqrt{1-2\left(1-\delta_{3r}\right)\left(2\eta-2\eta^{2}\left(1+\delta_{3r}\right)\right)}\|\widetilde{X}_{k}-X^{\ast}\|_{F} \\
\end{aligned}
\end{equation*}
where the second inequality follows from {\em Lemma \ref{rtr}} and {\em Lemma \ref{eeeee}}.
By recursively applying the above inequality over $t$, and noting that $\widetilde{X}_{k}=X_{0}$ and $\widetilde{X}_{k+1}=X_{n}$, we can obtain
\begin{equation*}
\begin{aligned}
&\mathbb{E}_{i_{t}}\|\widetilde{X}_{k+1}-X^{\ast}\|_{F}
=\mathbb{E}_{i_{t}}\|X_{n}-X^{\ast}\|_{F}\leq\left(\rho_{3r}^{n}+2\rho_{3r}^{n}+\cdots+2\rho_{3r}\right)\|\widetilde{X}_{k}-X^{\ast}\|_{F}\\
&\leq \frac{-3\rho_{3r}^{n+1}+\rho_{3r}^{n}+2\rho_{3r}}{1-\rho_{3r}}\|\widetilde{X}_{k}-X^{\ast}\|_{F},\\
\end{aligned}
\end{equation*}
where $\rho_{3r}=2\sqrt{1-2\left(1-\delta_{3r}\right)\left(2\eta-2\eta^{2}\left(1+\delta_{3r}\right)\right)}$.
 Since $\delta_{3r}<\frac{1}{71}$ and $\frac{6-6\delta_{3r}-\sqrt{71\delta_{3r}^{2}-72\delta_{3r}+1}}{12-12\delta_{3r}^{2}}<\eta<\frac{6-6\delta_{3r}+\sqrt{71\delta_{3r}^{2}-72\delta_{3r}+1}}{12-12\delta_{3r}^{2}}$, we have $\frac{-3\rho_{3r}^{n+1}+\rho_{3r}^{n}+2\rho_{3r}}{1-\rho_{3r}}<1$. The linear convergence of SVRG-ARM algorithm for affine rank minimization problem follows immediately.
\end{proof}

\section{Complexity analysis}
This section contains the result about complexity analysis of SVRG-ARM. We first present the key lemmas needed in the subsequent analyses of the number of iterations for obtaining accuracy of $\epsilon$.
\begin{lemma}\label{lemma41}
Let $\Gamma$ be a space spanned by $X$ and $X^{\ast}$, and the rank of any matrix in $\Gamma$ is at most $s$. Then we have 
\begin{equation*}
\begin{aligned}
\mathbb{E}\|\mathcal {P}_{\Gamma}\left(\nabla f_{i_{t}}\left(X\right)-\nabla f_{i_{t}}\left(X^{\ast}\right)\right)\|_{F}^{2}&\leq4\left(1+\delta_{s}\right)\left(F\left(X\right)-F\left(X^{\ast}\right)\right).\\
\end{aligned}
\end{equation*}
\end{lemma}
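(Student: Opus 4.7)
The plan is to apply Lemma \ref{leee} with $Y = X^{\ast}$, then pass to expectations and exploit the quadratic structure of $F$ to convert the inner product bound into a function value difference. All the heavy lifting has already been done in Lemma \ref{leee}; what remains is elementary.

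First I would instantiate Lemma \ref{leee} with $Y = X^{\ast}$ and with $\Gamma$ the subspace spanned by $X$ and $X^{\ast}$ (whose elements have rank at most $s$), obtaining the pointwise bound
\begin{equation*}
\|\mathcal{P}_{\Gamma}(\nabla f_{i_{t}}(X) - \nabla f_{i_{t}}(X^{\ast}))\|_{F}^{2} \leq 2(1+\delta_{s})\,\langle X - X^{\ast},\, \nabla f_{i_{t}}(X) - \nabla f_{i_{t}}(X^{\ast})\rangle.
\end{equation*}
Taking expectation over the random draw of $i_{t}$ and using the unbiasedness $\mathbb{E}[\nabla f_{i_{t}}(Z)] = \nabla F(Z)$ on the right hand side, the inequality becomes
\begin{equation*}
\mathbb{E}\|\mathcal{P}_{\Gamma}(\nabla f_{i_{t}}(X) - \nabla f_{i_{t}}(X^{\ast}))\|_{F}^{2} \leq 2(1+\delta_{s})\,\langle X - X^{\ast},\, \nabla F(X) - \nabla F(X^{\ast})\rangle.
\end{equation*}

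Next I would translate the inner product on the right hand side into the function value gap. Because $X^{\ast}$ satisfies $y = \mathcal{A}(X^{\ast})$, it attains $F(X^{\ast}) = 0$ and $\nabla F(X^{\ast}) = 0$. Using the explicit form $\nabla F(X) = \frac{2}{m}\sum_{\ell=1}^{m} A_{\ell}\langle A_{\ell}, X - X^{\ast}\rangle$, a direct expansion gives
\begin{equation*}
\langle X - X^{\ast},\, \nabla F(X)\rangle = \frac{2}{m}\sum_{\ell=1}^{m}\langle A_{\ell}, X - X^{\ast}\rangle^{2} = \frac{2}{m}\|\mathcal{A}(X - X^{\ast})\|_{2}^{2} = 2F(X) = 2(F(X) - F(X^{\ast})).
\end{equation*}
Plugging this identity into the previous display yields the desired
\begin{equation*}
\mathbb{E}\|\mathcal{P}_{\Gamma}(\nabla f_{i_{t}}(X) - \nabla f_{i_{t}}(X^{\ast}))\|_{F}^{2} \leq 4(1+\delta_{s})(F(X) - F(X^{\ast})).
\end{equation*}

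I do not anticipate a real obstacle: the content of the lemma is essentially a consequence of the co-coercivity estimate already proved in Lemma \ref{leee}, combined with the fact that for the quadratic least-squares objective $F$ the quantity $\langle X - X^{\ast}, \nabla F(X)\rangle$ equals exactly twice the optimality gap $F(X) - F(X^{\ast})$. The only point to be careful about is that $\Gamma$ must simultaneously contain $X$ and $X^{\ast}$ so that Lemma \ref{leee} applies with the dimension bound $s$, but that is exactly the hypothesis of the present lemma.
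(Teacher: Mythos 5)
Your proposal is correct, and it reaches the stated bound with the right constant, but by a shorter route than the paper. The paper does not reuse Lemma \ref{leee}: it re-runs the same descent-lemma machinery from scratch, defining the anchored function $\varphi_{i_{t}}(X)=f_{i_{t}}(X)-f_{i_{t}}(X^{\ast})-\langle\nabla f_{i_{t}}(X^{\ast}),X-X^{\ast}\rangle$, noting $\varphi_{i_{t}}(X^{\ast})=\min_{X}\varphi_{i_{t}}(X)=0$, and evaluating $\varphi_{i_{t}}$ at $X-\tfrac{1}{2(1+\delta_{s})}\mathcal{P}_{\Gamma}\nabla\varphi_{i_{t}}(X)$ to get the pointwise Bregman-type bound $\|\mathcal{P}_{\Gamma}(\nabla f_{i_{t}}(X)-\nabla f_{i_{t}}(X^{\ast}))\|_{F}^{2}\leq 4(1+\delta_{s})\bigl(f_{i_{t}}(X)-f_{i_{t}}(X^{\ast})-\langle\nabla f_{i_{t}}(X^{\ast}),X-X^{\ast}\rangle\bigr)$, which after taking expectation and using $\nabla F(X^{\ast})=0$ gives the claim. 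You instead invoke the already-proved co-coercivity of Lemma \ref{leee} with $Y=X^{\ast}$, take expectation using unbiasedness of $\nabla f_{i_{t}}$, and then convert the inner product via the exact quadratic identity $\langle X-X^{\ast},\nabla F(X)-\nabla F(X^{\ast})\rangle=\tfrac{2}{m}\|\mathcal{A}(X-X^{\ast})\|_{2}^{2}=2(F(X)-F(X^{\ast}))$. Both arguments ultimately hinge on the same descent-lemma mechanics (Lemma \ref{leee} is proved that way) and on the optimality of $X^{\ast}$; your version is more economical because it recycles Lemma \ref{leee} rather than rebuilding it, at the price of leaning explicitly on the least-squares structure and the consistency $y=\mathcal{A}(X^{\ast})$ (note that the weaker hypothesis $\nabla F(X^{\ast})=0$, which the paper uses, already suffices for your identity via the exact quadratic expansion of $F$, so nothing is lost under the paper's assumptions).
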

\begin{proof}
For any $i_{t}\in\{1,2,\ldots,m\}$, and $X\in \mathbb{R}^{n_{1}\times n_{2}}$, we define
\begin{equation}\label{eqerr}
\begin{aligned}
\varphi_{i_{t}}(X)=f_{i_{t}}(X)-f_{i_{t}}(X^{\ast})-\langle\nabla f_{i_{t}}(X^{\ast}),X-X^{\ast}\rangle
\end{aligned}
\end{equation}
Then, we can get a similar inequality as in (\ref{eeee})
\begin{equation}\label{erwq}
\begin{aligned}
\|\nabla\varphi_{i_{t}}(X)-\nabla\varphi_{i_{t}}(Y)\|_{F}^{2}=\|\nabla f_{i_{t}}(X)-\nabla f_{i_{t}}(Y)\|_{F}^{2}\leq2(1+\delta_{s})\|X-Y\|_{F}^{2}.
\end{aligned}
\end{equation}
Since $\nabla\varphi_{i_{t}}(X^{\ast})=0$, we have $\varphi_{i_{t}}(X^{\ast})=\min\limits_{X}\varphi_{i_{t}}(X)$. Together with (\ref{erwq}), and the equivalent conditions $(0)$ and $(2)$ in {\em{Lemma 4}}  (\cite{XYZhou}), it results in 
\begin{equation*}
\begin{aligned}
0=\varphi_{i_{t}}(X^{\ast})&\leq\varphi_{i_{t}}\left(X-\mathcal {P}_{\Gamma}\left(\frac{1}{2\left(1+\delta_{s}\right)}\nabla\varphi_{i_{t}}\left(X\right)\right)\right)\\
&\leq\varphi_{i_{t}}\left(X\right)-\left\langle\nabla\varphi_{i_{t}}\left(X\right),\mathcal {P}_{\Gamma}\left(\frac{1}{2\left(1+\delta_{s}\right)}\nabla\varphi_{i_{t}}\left(X\right)\right)\right\rangle+\left(1+\delta_{s}\right)\|\frac{1}{2\left(1+\delta_{s}\right)}\mathcal {P}_{\Gamma}\left(\nabla\varphi_{i_{t}}\left(X\right)\right)\|_{F}^{2}\\
&=\varphi_{i_{t}}\left(X\right)-\frac{1}{4\left(1+\delta_{s}\right)}\|\mathcal {P}_{\Gamma}\left(\nabla\varphi_{i}\left(X\right)\right)\|_{F}^{2}.\\
\end{aligned}
\end{equation*}
From the definition (\ref{eqerr}), we have 
\begin{equation*}
\begin{aligned}
\|\mathcal {P}_{\Gamma}\left(\nabla f_{i_{t}}\left(X\right)-\nabla f_{i_{t}}\left(X^{\ast}\right)\right)\|_{F}^{2}\leq4\left(1+\delta_{s}\right)\left(f_{i_{t}}\left(X\right)-f_{i_{t}}\left(X^{\ast}\right)-\left\langle\nabla f_{i_{t}}\left(X^{\ast}\right),X-X^{\ast}\right\rangle\right).
\end{aligned}
\end{equation*}
Taking expectation with respect to $i_{t}$, we get 
\begin{equation*}
\begin{aligned}
\mathbb{E}\|\mathcal {P}_{\Gamma}\left(\nabla f_{i_{t}}\left(X\right)-\nabla f_{i_{t}}\left(X^{\ast}\right)\right)\|_{F}^{2}&\leq4\left(1+\delta_{s}\right)\left(F\left(X\right)-F\left(X^{\ast}\right)-\left\langle\nabla F\left(X^{\ast}\right),X-X^{\ast}\right\rangle\right)\\
&=4\left(1+\delta_{s}\right)\left(F\left(X\right)-F\left(X^{\ast}\right)\right),\\
\end{aligned}
\end{equation*}
where the last equality follows from the fact $\nabla F\left(X^{\ast}\right)=0$.
\end{proof}

\begin{lemma}\label{lemma42}
Let $X^{\ast}$ is the optimal solution to (\ref{l0formulation}). Given a low-rank matrix $X$, where rank$(X)=r$, let $\Lambda$ be a space spanned by $X$ and $X^{\ast}$, and the rank of any matrix in $\Lambda$ is at most $\tau$, then we have
\begin{equation*}
\begin{aligned}
\|\mathcal {P}_{\Lambda}\left(\nabla F\left(X\right)\right)\|_{F}^{2}\geq\frac{2\left(1-\delta_{\tau}\right)}{1+\delta_{\tau}}\left(F\left(X\right)-F\left(X^{\ast}\right)\right)\\
\end{aligned}
\end{equation*} 
\end{lemma}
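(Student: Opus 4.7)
The plan is to establish a Polyak-type inequality directly from the restricted strong convexity already proved in {\em Lemma \ref{lla}}; this is essentially the classical strong-convexity-to-PL derivation with the orthogonal projection $\mathcal{P}_\Lambda$ absorbed into the inner product. The one observation that makes everything go through is that $X-X^{\ast}\in\Lambda$, so $\langle \nabla F(X), X-X^{\ast}\rangle = \langle \mathcal{P}_\Lambda \nabla F(X), X-X^{\ast}\rangle$; the projection comes along for free.

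First I would apply {\em Lemma \ref{lla}} with $s=\tau$ and $Y=X^{\ast}$. This is admissible because $\{X,X^{\ast}\}$ spans $\Lambda$, whose elements have rank at most $\tau$. The result is $F(X^{\ast}) \geq F(X) + \langle \nabla F(X), X^{\ast}-X\rangle + (1-\delta_\tau)\|X-X^{\ast}\|_F^2$, which I would rearrange, using the projection identity above, to the form $F(X)-F(X^{\ast})+(1-\delta_\tau)\|X-X^{\ast}\|_F^2 \leq \langle \mathcal{P}_\Lambda \nabla F(X), X-X^{\ast}\rangle$.

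Next I would apply Cauchy--Schwarz and then Young's inequality $ab\leq \frac{a^2}{4(1-\delta_\tau)}+(1-\delta_\tau)b^2$ with $a=\|\mathcal{P}_\Lambda \nabla F(X)\|_F$ and $b=\|X-X^{\ast}\|_F$. The Young parameter $2(1-\delta_\tau)$ is chosen precisely so that the $(1-\delta_\tau)\|X-X^{\ast}\|_F^2$ terms cancel on the two sides, after which the inequality collapses to $F(X)-F(X^{\ast})\leq \frac{\|\mathcal{P}_\Lambda \nabla F(X)\|_F^2}{4(1-\delta_\tau)}$, i.e.\ $\|\mathcal{P}_\Lambda \nabla F(X)\|_F^2 \geq 4(1-\delta_\tau)(F(X)-F(X^{\ast}))$. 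Since $\delta_\tau\geq 0$ implies $4(1-\delta_\tau)\geq \frac{2(1-\delta_\tau)}{1+\delta_\tau}$, the stated bound follows immediately.

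There is no real obstacle in this argument; the only point of any delicacy is the Young-parameter choice, which is equivalent to completing the square in the concave quadratic $t\mapsto \|\mathcal{P}_\Lambda \nabla F(X)\|_F\, t - (1-\delta_\tau)t^2$ and maximizing over $t=\|X-X^{\ast}\|_F\geq 0$. Both routes produce the constant $4(1-\delta_\tau)$, which in fact majorizes what the lemma asks for, so any reasonable implementation of the strong-convexity plus Cauchy--Schwarz recipe is enough.
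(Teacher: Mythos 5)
Your proof is correct, but it takes a genuinely different route from the paper's. The paper never uses the function-value form of restricted strong convexity (\ref{rtyu}) here; instead it works through the distance $\|X-X^{\ast}\|_{F}$ as an intermediary: it combines the gradient-monotonicity bound (\ref{wocao}) with Cauchy--Schwarz to get $\|\mathcal {P}_{\Lambda}(\nabla F(X))\|_{F}^{2}\geq 4(1-\delta_{\tau})^{2}\|X-X^{\ast}\|_{F}^{2}$ (its (\ref{edcv})), then invokes the upper RIP bound (\ref{efer}) together with $\nabla F(X^{\ast})=0$ to get the restricted-smoothness estimate $F(X)-F(X^{\ast})\leq(1+\delta_{\tau})\|X-X^{\ast}\|_{F}^{2}$ (its (\ref{ewqee})), and divides, ending with the constant $4(1-\delta_{\tau})^{2}/(1+\delta_{\tau})$. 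You instead go directly from (\ref{rtyu}) with $Y=X^{\ast}$, the projection identity $\langle\nabla F(X),X-X^{\ast}\rangle=\langle\mathcal {P}_{\Lambda}\nabla F(X),X-X^{\ast}\rangle$, and Cauchy--Schwarz plus Young (equivalently, maximizing the concave quadratic in $\|X-X^{\ast}\|_{F}$), which is the classical strong-convexity-to-PL derivation. Your route is shorter, never touches the upper RIP inequality or the fact $\nabla F(X^{\ast})=0$, and yields the constant $4(1-\delta_{\tau})$, which dominates both the paper's proof constant and the stated $\frac{2(1-\delta_{\tau})}{1+\delta_{\tau}}$ for every $\delta_{\tau}\in[0,1)$; by contrast, the paper's silent weakening from $4(1-\delta_{\tau})^{2}/(1+\delta_{\tau})$ to the stated constant actually needs $\delta_{\tau}\leq\frac12$. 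One cosmetic point: the final passage from the stronger constant to the stated one multiplies $F(X)-F(X^{\ast})$ by a smaller factor, so it implicitly uses $F(X)-F(X^{\ast})\geq0$; this is immediate here since $X^{\ast}$ minimizes $F$ over matrices of rank at most $r$ (indeed $F(X^{\ast})=0$ in the noiseless model) and rank$(X)=r$, but it is worth a half-sentence.
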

\begin{proof}
We first note that
\begin{equation*}
\begin{aligned}
\left\langle X-X^{\ast},\nabla F\left(X\right)-\nabla F\left(X^{\ast}\right)\right\rangle=\left\langle X-X^{\ast},\mathcal {P}_{\Lambda}\left(\nabla F\left(X\right)-\nabla F\left(X^{\ast}\right)\right)\right\rangle\leq\|X-X^{\ast}\|_{F}\|\mathcal {P}_{\Lambda}\left(\nabla F\left(X\right)\right)\|_{F}\\
\end{aligned}
\end{equation*} 
Together with (\ref{wocao}), we have
\begin{equation}\label{edcv}
\begin{aligned}
\|\mathcal {P}_{\Lambda}\left(\nabla F\left(X\right)\right)\|_{F}^{2}\geq4\left(1-\delta_{\tau}\right)^{2}\|X-X^{\ast}\|_{F}^{2}\\
\end{aligned}
\end{equation} 
Let $i_{t}=\{1,\ldots,m\}$ in (\ref{efer}), we have
\begin{equation*}
\begin{aligned}
\left\langle \nabla F\left(X\right)-\nabla F\left(X^{\ast}\right),X-X^{\ast}\right\rangle\leq2(1+\delta_{\tau})\|X-X^{\ast}\|_{F}^{2}.
\end{aligned}
\end{equation*}
The equivalent conditions $(3)$ and $(2)$ in {\em{Lemma 4}}  (\cite{XYZhou}) implies
\begin{equation}\label{ewqee}
\begin{aligned}
F(X)\leq F(X^{\ast})+\left\langle\nabla F(X^{\ast}),X-X^{\ast}\right\rangle+\left(1+\delta_{\tau}\right)\|X-X^{\ast}\|_{F}^{2}= F(X^{\ast})+\left(1+\delta_{\tau}\right)\|X-X^{\ast}\|_{F}^{2}.
\end{aligned}
\end{equation}
Combining (\ref{edcv}) and (\ref{ewqee}) yields the desired result
\begin{equation*}
\begin{aligned}
\|\mathcal {P}_{\Lambda}\left(\nabla F\left(X\right)\right)\|_{F}^{2}\geq\frac{4\left(1-\delta_{\tau}\right)^{2}}{1+\delta_{\tau}}\left(F\left(X\right)-F\left(X^{\ast}\right)\right)\\
\end{aligned}
\end{equation*} 
\end{proof}

\begin{lemma}\label{reop}
Denote $\Omega_{t}$ as the subspace spanned by $\widetilde{X}_{k}$, $X_{t}$ and $X^{\ast}$, and the rank of any matrix in $\Omega_{t}$ is at most $3r$. Let $V_{t}=\nabla f_{i_{t}}\left(X_{t}\right)-\nabla f_{i_{t}}\left(\widetilde{X}_{k}\right)+\nabla F\left(\widetilde{X}_{k}\right)$ as defined in {\em Algorithm $1$}, then we have 
\begin{equation*}
\begin{aligned}
\mathbb{E}_{i_{t}}\|\mathcal {P}_{\Omega_{t}}\left(V_{t}\right)\|_{F}^{2}\leq8\left(1+\delta_{3r}\right)\left(F\left(X_{t}\right)-F\left(X^{\ast}\right)\right)+\frac{32\delta_{3r}}{1+\delta_{3r}}\left(F\left(\widetilde{X}_{k}\right)-F\left(X^{\ast}\right)\right).
\end{aligned}
\end{equation*}
\end{lemma}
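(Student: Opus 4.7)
The plan is to bound the variance of the SVRG stochastic gradient by writing $V_t$ as the sum of a term that measures progress from $X_t$ and a mean-zero correction that measures progress from the reference point $\widetilde X_k$. Exploiting $\nabla F(X^\ast)=0$, I would write
\begin{equation*}
V_t = \underbrace{\bigl[\nabla f_{i_t}(X_t)-\nabla f_{i_t}(X^\ast)\bigr]}_{P} + \underbrace{\bigl[\nabla f_{i_t}(X^\ast)-\nabla f_{i_t}(\widetilde X_k)+\nabla F(\widetilde X_k)\bigr]}_{Q},
\end{equation*}
and then use the elementary inequality $\|\mathcal P_{\Omega_t}(P+Q)\|_F^2\leq 2\|\mathcal P_{\Omega_t}(P)\|_F^2+2\|\mathcal P_{\Omega_t}(Q)\|_F^2$ to split the estimation into two pieces. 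Since $X_t$, $\widetilde X_k$ and $X^\ast$ all lie in $\Omega_t$ and the rank of any matrix in $\Omega_t$ is at most $3r$, \emph{Lemma \ref{lemma41}} can be applied with $s=3r$ to both $\nabla f_{i_t}(X_t)-\nabla f_{i_t}(X^\ast)$ and $\nabla f_{i_t}(\widetilde X_k)-\nabla f_{i_t}(X^\ast)$.

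For the first piece, \emph{Lemma \ref{lemma41}} immediately gives $\mathbb{E}_{i_t}\|\mathcal P_{\Omega_t}(P)\|_F^2\leq 4(1+\delta_{3r})(F(X_t)-F(X^\ast))$, which accounts for the $8(1+\delta_{3r})(F(X_t)-F(X^\ast))$ term after multiplication by $2$. The key step for the second piece is to notice that $Q$ is mean-zero: by unbiasedness, $\mathbb{E}_{i_t}[\nabla f_{i_t}(X^\ast)]=\nabla F(X^\ast)=0$ and $\mathbb{E}_{i_t}[\nabla f_{i_t}(\widetilde X_k)]=\nabla F(\widetilde X_k)$, hence $\mathbb{E}_{i_t}[Q]=0$. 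Writing $Q=U-\mathbb{E}_{i_t}U$ with $U=\nabla f_{i_t}(X^\ast)-\nabla f_{i_t}(\widetilde X_k)$ and applying the variance identity $\mathbb{E}\|U-\mathbb{E}U\|_F^2=\mathbb{E}\|U\|_F^2-\|\mathbb{E}U\|_F^2$ (componentwise after projecting onto $\Omega_t$) yields
\begin{equation*}
\mathbb{E}_{i_t}\|\mathcal P_{\Omega_t}(Q)\|_F^2 = \mathbb{E}_{i_t}\|\mathcal P_{\Omega_t}(\nabla f_{i_t}(\widetilde X_k)-\nabla f_{i_t}(X^\ast))\|_F^2 - \|\mathcal P_{\Omega_t}(\nabla F(\widetilde X_k))\|_F^2.
\end{equation*}

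Now I would upper-bound the first summand by \emph{Lemma \ref{lemma41}} to get $4(1+\delta_{3r})(F(\widetilde X_k)-F(X^\ast))$, and lower-bound the second summand by applying \emph{Lemma \ref{lemma42}} at $X=\widetilde X_k$ with $\tau=3r$, using the sharper inequality $\|\mathcal P_{\Omega_t}(\nabla F(\widetilde X_k))\|_F^2\geq \frac{4(1-\delta_{3r})^2}{1+\delta_{3r}}(F(\widetilde X_k)-F(X^\ast))$ that actually appears inside the proof of Lemma \ref{lemma42}. Combining the two and using the identity $(1+\delta_{3r})^2-(1-\delta_{3r})^2=4\delta_{3r}$ collapses the difference
\begin{equation*}
4(1+\delta_{3r}) - \frac{4(1-\delta_{3r})^2}{1+\delta_{3r}} = \frac{4(1+\delta_{3r})^2-4(1-\delta_{3r})^2}{1+\delta_{3r}}=\frac{16\delta_{3r}}{1+\delta_{3r}},
\end{equation*}
so that $\mathbb{E}_{i_t}\|\mathcal P_{\Omega_t}(Q)\|_F^2\leq \frac{16\delta_{3r}}{1+\delta_{3r}}(F(\widetilde X_k)-F(X^\ast))$; multiplying by $2$ supplies the second term of the claim.

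The only real subtlety is step (4)–(6): I need \emph{Lemma \ref{lemma41}} and \emph{Lemma \ref{lemma42}} with the projection onto $\Omega_t$ (which has dimension $\leq 3r$) rather than onto the minimal two-matrix span, but their proofs only use RIP over matrices of rank at most $s$, so they extend verbatim to any subspace all of whose elements have rank at most $s=3r$. The main obstacle is recognizing that the naive bound $\|\mathcal P_{\Omega_t}(Q)\|_F^2\leq 4(1+\delta_{3r})(F(\widetilde X_k)-F(X^\ast))$ is far too weak — the factor $\frac{32\delta_{3r}}{1+\delta_{3r}}$ on the right-hand side forces us to exploit the mean-zero structure of $Q$ and subtract off the squared-projected-full-gradient term via \emph{Lemma \ref{lemma42}}; this variance-reduction identity is what makes the argument tight.
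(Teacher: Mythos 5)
Your proposal is correct and follows essentially the same route as the paper: the same splitting of $V_t$ via $\|\mathcal P_{\Omega_t}(P+Q)\|_F^2\leq 2\|\mathcal P_{\Omega_t}(P)\|_F^2+2\|\mathcal P_{\Omega_t}(Q)\|_F^2$, two applications of Lemma \ref{lemma41}, and the sharper bound $\frac{4(1-\delta_{3r})^2}{1+\delta_{3r}}$ from inside the proof of Lemma \ref{lemma42} (which the paper also uses silently). The only cosmetic difference is that the paper expands the square of $Q$ and cancels the cross term using unbiasedness, whereas you invoke the variance identity directly — the same computation.
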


\begin{proof}
By the definition of $\mathcal {P}_{\Omega_{t}}\left(V_{t}\right)$, we have 
\begin{equation*}
\begin{aligned}
\mathbb{E}_{i_{t}}\|\mathcal {P}_{\Omega_{t}}\left(V_{t}\right)\|_{F}^{2}&=\mathbb{E}_{i_{t}}\|\mathcal {P}_{\Omega_{t}}\left(\nabla f_{i_{t}}\left(X_{t}\right)-\nabla f_{i_{t}}\left(\widetilde{X}_{k}\right)+\nabla F\left(\widetilde{X}_{k}\right)\right)\|_{F}^{2}\\
&=\mathbb{E}_{i_{t}}\|\mathcal {P}_{\Omega_{t}}\left(\left(\nabla f_{i_{t}}\left(X_{t}\right)-\nabla f_{i_{t}}\left(X^{\ast}\right)\right)-\left(\nabla f_{i_{t}}\left(\widetilde{X}_{k}\right)-\nabla f_{i_{t}}\left(X^{\ast}\right)\right)+\nabla F\left(\widetilde{X}_{k}\right)\right)\|_{F}^{2}\\
&\leq2\mathbb{E}_{i_{t}}\|\mathcal {P}_{\Omega_{t}}\left(\left(\nabla f_{i_{t}}\left(X_{t}\right)-\nabla f_{i_{t}}\left(X^{\ast}\right)\right)\right)\|_{F}^{2}+2\mathbb{E}_{i_{t}}\|\mathcal {P}_{\Omega_{t}}\left(\left(\nabla f_{i_{t}}\left(\widetilde{X}_{k}\right)-\nabla f_{i_{t}}\left(X^{\ast}\right)\right)-\nabla F\left(\widetilde{X}_{k}\right)\right)\|_{F}^{2}\\
&=2\mathbb{E}_{i_{t}}\|\mathcal {P}_{\Omega_{t}}\left(\left(\nabla f_{i_{t}}\left(X_{t}\right)-\nabla f_{i_{t}}\left(X^{\ast}\right)\right)\right)\|_{F}^{2}+2\mathbb{E}_{i_{t}}\|\mathcal {P}_{\Omega_{t}}\left(\left(\nabla f_{i_{t}}\left(\widetilde{X}_{k}\right)-\nabla f_{i_{t}}\left(X^{\ast}\right)\right)\right)\|_{F}^{2}\\
&+2\mathbb{E}_{i_{t}}\|\mathcal {P}_{\Omega_{t}}\left(\nabla F\left(\widetilde{X}_{k}\right)\right)\|_{F}^{2}-4\mathbb{E}_{i_{t}}\langle\mathcal {P}_{\Omega_{t}}\left(\left(\nabla f_{i_{t}}\left(\widetilde{X}_{k}\right)-\nabla f_{i_{t}}\left(X^{\ast}\right)\right)\right),\mathcal {P}_{\Omega_{t}}\left(\nabla F\left(\widetilde{X}_{k}\right)\right)\rangle\\
&=2\mathbb{E}_{i_{t}}\|\mathcal {P}_{\Omega_{t}}\left(\left(\nabla f_{i_{t}}\left(X_{t}\right)-\nabla f_{i_{t}}\left(X^{\ast}\right)\right)\right)\|_{F}^{2}+2\mathbb{E}_{i_{t}}\|\mathcal {P}_{\Omega_{t}}\left(\left(\nabla f_{i_{t}}\left(\widetilde{X}_{k}\right)-\nabla f_{i_{t}}\left(X^{\ast}\right)\right)\right)\|_{F}^{2}\\
&-2\|\mathcal {P}_{\Omega_{t}}\left(\nabla F\left(\widetilde{X}_{k}\right)\right)\|_{F}^{2}\\
&\leq8\left(1+\delta_{3r}\right)\left(F\left(X_{t}\right)-F\left(X^{\ast}\right)\right)+8\left(1+\delta_{3r}\right)\left(F\left(\widetilde{X}_{k}\right)-F\left(X^{\ast}\right)\right)-\frac{8\left(1-\delta_{3r}\right)^{2}}{1+\delta_{3r}}\left(F\left(\widetilde{X}_{k}\right)-F\left(X^{\ast}\right)\right)\\
&=8\left(1+\delta_{3r}\right)\left(F\left(X_{t}\right)-F\left(X^{\ast}\right)\right)+\frac{32\delta_{3r}}{1+\delta_{3r}}\left(F\left(\widetilde{X}_{k}\right)-F\left(X^{\ast}\right)\right).\\
\end{aligned}
\end{equation*}
where the last inequality is due to {\em Lemma \ref{lemma41}} and {\em Lemma \ref{lemma42}} .
\end{proof}

\begin{theorem}\label{ertyui}
Assume that $X^{\ast}$ is the optimal solution to (\ref{l0formulation}), the linear mapping $\mathcal {A}$ satisfies RIP defined in {\em Definition \ref{erf}} with $\delta_{3r}\leq\frac{1}{20}$, and the step size satisfies
\begin{equation*}
\begin{aligned}
\frac{2\left(1+\delta_{3r}\right)\sqrt{1-\delta_{3r}}-\sqrt{-68\delta_{3r}^{3}-388\delta_{3r}^{2}-60\delta_{3r}+4}}{\left(16\delta_{3r}^{2}+96\delta_{3r}+16\right)\sqrt{1-\delta_{3r}}}\leq\eta\leq\frac{2\left(1+\delta_{3r}\right)\sqrt{1-\delta_{3r}}+\sqrt{-68\delta_{3r}^{3}-388\delta_{3r}^{2}-60\delta_{3r}+4}}{\left(16\delta_{3r}^{2}+96\delta_{3r}+16\right)\sqrt{1-\delta_{3r}}}
\end{aligned}
\end{equation*}
then the sequence produced by SVRG-ARM satisfies
\begin{equation*}
\begin{aligned}
\mathbb{E}\left(F\left(\widetilde{X}_{k+1}\right)-F\left(X^{\ast}\right)\right)&\leq\beta_{3r}\mathbb{E}\left(F\left(\widetilde{X}_{k}\right)-F\left(X^{\ast}\right)\right)\\\\
\end{aligned}
\end{equation*}
where $\beta_{3r}=\left(\mu_{3r}^{n}+\frac{\nu_{3r}\left(1-\mu_{3r}^{n}\right)}{1-\mu_{3r}}\right)<1$, $\mu_{3r}=\frac{1+\delta_{3r}}{1-\delta_{3r}}-2\eta\left(1+\delta_{3r}\right)\left(1-4\eta\left(1+\delta_{3r}\right)\right)$ and $\nu_{3r}=32\delta_{3r}\eta^{2}$.
\end{theorem}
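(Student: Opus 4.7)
The plan is to derive a one-step function-value recursion of the form $\mathbb{E}[F(X_{t+1})-F(X^{\ast})] \leq \mu_{3r}\,\mathbb{E}[F(X_t)-F(X^{\ast})] + \nu_{3r}(F(\widetilde{X}_k)-F(X^{\ast}))$ for the inner iterates, and then to unroll it over $t = 0,\dots,n-1$ using $X_0 = \widetilde{X}_k$ and $X_n = \widetilde{X}_{k+1}$. Telescoping this recursion produces the geometric sum $\beta_{3r}=\mu_{3r}^{n} + \nu_{3r}(1-\mu_{3r}^{n})/(1-\mu_{3r})$ that appears in the statement, so the whole theorem reduces to justifying the single-step inequality together with the contraction property $\beta_{3r}<1$.

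The starting point for the single-step bound mirrors the IHT argument already used in the proof of Theorem~3.1. By Eckart--Young, $\|X_{t+1}-W_t\|_F \leq \|X^{\ast}-W_t\|_F$, which rearranges into $\|X_{t+1}-X^{\ast}\|_F^{2} \leq 2\langle X_{t+1}-X^{\ast},\,W_t-X^{\ast}\rangle$. Let $\Omega_t$ be the subspace spanned by $\widetilde{X}_k$, $X_t$, $X^{\ast}$ (of rank at most $3r$); since $X_{t+1}-X^{\ast}\in \Omega_t$, the inner product equals $\langle X_{t+1}-X^{\ast},\,X_t - X^{\ast} - \eta\,\mathcal{P}_{\Omega_t}(V_t)\rangle$. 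Taking the conditional expectation in $i_t$ and using $\mathbb{E}_{i_t}[V_t]=\nabla F(X_t)$, Lemma~3.1 lower-bounds the cross term by $F(X_t)-F(X^{\ast})+(1-\delta_{3r})\|X_t-X^{\ast}\|_F^{2}$, while Lemma~4.3 upper-bounds the quadratic term $\eta^{2}\mathbb{E}_{i_t}\|\mathcal{P}_{\Omega_t}(V_t)\|_F^{2}$ by $8\eta^{2}(1+\delta_{3r})(F(X_t)-F(X^{\ast})) + \tfrac{32\eta^{2}\delta_{3r}}{1+\delta_{3r}}(F(\widetilde{X}_k)-F(X^{\ast}))$.

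To turn this into a recursion in $F$ rather than distances, I would apply the strong-convexity bound $\|X_t-X^{\ast}\|_F^{2} \leq (F(X_t)-F(X^{\ast}))/(1-\delta_{3r})$ from Lemma~3.1 on the right-hand side, and the smoothness bound $F(X_{t+1})-F(X^{\ast})\leq (1+\delta_{3r})\|X_{t+1}-X^{\ast}\|_F^{2}$ (inequality \eqref{ewqee}, valid on the rank-restricted span) on the left-hand side. Collecting coefficients of $F(X_t)-F(X^{\ast})$ and of $F(\widetilde{X}_k)-F(X^{\ast})$ yields exactly the $\mu_{3r}$ and $\nu_{3r}$ defined in the statement. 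Unrolling then produces $\beta_{3r}$ in closed form. Finally, $\beta_{3r}<1$ follows from the sufficient condition $\mu_{3r}+\nu_{3r}<1$ (one verifies that, when $0<\mu_{3r}<1$, this is in fact equivalent to the inequality $\beta_{3r}<1$); clearing denominators, this reduces to the quadratic inequality $4(1-\delta_{3r})(1+6\delta_{3r}+\delta_{3r}^{2})\eta^{2} - (1-\delta_{3r}^{2})\eta + \delta_{3r} < 0$, whose two roots are precisely the endpoints of the $\eta$-interval in the theorem and whose discriminant equals $-68\delta_{3r}^{3} - 388\delta_{3r}^{2} - 60\delta_{3r} + 4$, nonnegative precisely because $\delta_{3r}\leq \tfrac{1}{20}$.

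The main technical obstacle is the bookkeeping in passing from the Eckart--Young inner-product inequality to a bound on $F(X_{t+1})-F(X^{\ast})$ in terms of $\mathbb{E}\|X_t-X^{\ast}-\eta\mathcal{P}_{\Omega_t}(V_t)\|_F^{2}$ without accumulating spurious multiplicative constants: a naive Cauchy--Schwarz would inflate $\nu_{3r}$ by a factor of four and spoil the match with the claimed formula, so the projection onto $\Omega_t$ and the strong-convexity/smoothness substitutions must be executed in the right order. The contraction check $\beta_{3r}<1$ is algebraically tedious but mechanical, and the RIP threshold $\delta_{3r}\leq 1/20$ is exactly what guarantees that the governing quadratic in $\eta$ has real roots and hence a nonempty feasible step-size interval.
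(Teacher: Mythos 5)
Your overall route is the paper's: derive the one-step recursion $\mathbb{E}_{i_t}\left(F(X_{t+1})-F(X^{\ast})\right)\leq\mu_{3r}\left(F(X_t)-F(X^{\ast})\right)+\nu_{3r}\left(F(\widetilde{X}_k)-F(X^{\ast})\right)$ from Lemma \ref{reop}, the restricted strong-convexity bound $\|X_t-X^{\ast}\|_F^{2}\leq (F(X_t)-F(X^{\ast}))/(1-\delta_{3r})$ and the smoothness bound (\ref{bbbvv}), then unroll over the inner loop to get $\beta_{3r}$. Your endgame is actually more explicit than the paper's, which merely asserts $\beta_{3r}<1$: the equivalence of $\beta_{3r}<1$ with $\mu_{3r}+\nu_{3r}<1$ is valid (since $\mu_{3r}\geq\frac{1+\delta_{3r}}{1-\delta_{3r}}-\frac18>0$ always, and $\mu_{3r}<1$ follows from $\mu_{3r}+\nu_{3r}<1$), and your quadratic $4(1-\delta_{3r})(1+6\delta_{3r}+\delta_{3r}^{2})\eta^{2}-(1-\delta_{3r}^{2})\eta+\delta_{3r}<0$ does have the theorem's interval endpoints as its roots; the only slip is that its discriminant is $(1-\delta_{3r})(1-15\delta_{3r}-97\delta_{3r}^{2}-17\delta_{3r}^{3})$, which is proportional to, not equal to, $4-60\delta_{3r}-388\delta_{3r}^{2}-68\delta_{3r}^{3}$ — harmless, since positivity is equivalent.

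The genuine gap is in your single-step bound, i.e.\ in how the hard-thresholding step is absorbed. First, the membership claim $X_{t+1}-X^{\ast}\in\Omega_t$ is false: $\Omega_t$ is spanned by $\widetilde{X}_k$, $X_t$, $X^{\ast}$, whereas $X_{t+1}=\mathcal{H}_r(W_t)$ with $W_t=X_t-\eta V_t$, and $V_t$ involves sampled gradients, so $X_{t+1}$ generally lies outside $\Omega_t$; enlarging the subspace to contain $X_{t+1}$ raises the rank to $4r$ and would force Lemma \ref{reop} and every constant to be restated with $\delta_{4r}$. Second, even on a correct subspace, your starting point $\|X_{t+1}-X^{\ast}\|_F^{2}\leq 2\langle X_{t+1}-X^{\ast},W_t-X^{\ast}\rangle$ is linear in $W_t-X^{\ast}$ and contains no term $\eta^{2}\|\mathcal{P}_{\Omega_t}(V_t)\|_F^{2}$, so invoking Lemma \ref{reop} on ``the quadratic term'' presupposes you have already passed to $\|X_t-X^{\ast}-\eta\mathcal{P}_{\Omega_t}(V_t)\|_F^{2}$; the only ways to get there from the Eckart--Young inequality (Cauchy--Schwarz plus cancellation, as in the proof of the Theorem in Section 3) cost a factor $2$ in norm, hence $4$ on squares, and no reordering of the strong-convexity/smoothness substitutions removes it — you name exactly this obstacle but do not resolve it. For what it is worth, the paper's own proof sidesteps the same point by writing $\mathbb{E}_{i_t}\|X_{t+1}-X^{\ast}\|_F^{2}=\mathbb{E}_{i_t}\|X_t-\eta V_t-X^{\ast}\|_F^{2}$ in (\ref{eoru}), i.e.\ by treating the thresholded iterate as if it were $W_t$, which is itself unjustified because projection onto the nonconvex rank constraint is not non-expansive toward $X^{\ast}$; the stated $\mu_{3r}$ and $\nu_{3r}$ are only reachable under an inequality of the form $\|X_{t+1}-X^{\ast}\|_F^{2}\leq\|W_t-X^{\ast}\|_F^{2}$, and supplying (or honestly paying the constant for) that step is precisely what is missing from your proposal.
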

\begin{proof}
Let $\Omega_{t}$ be a space spanned by $\widetilde{X}_{k}$, $X_{t}$ and $X^{\ast}$. Since rank$(\widetilde{X}_{k})=r$, rank$(X_{t})=r$ and rank$(X^{\ast})=r$, the rank of any matrix in $\Omega_{t}$ is at most $3r$. Then

\begin{equation}\label{eoru}
\begin{aligned}
\mathbb{E}_{i_{t}}\|X_{t+1}-X^{\ast}\|_{F}^{2}&=\mathbb{E}_{i_{t}}\|X_{t}-\eta V_{t}-X^{\ast}\|_{F}^{2}\\
&=\mathbb{E}_{i_{t}}\|X_{t}-\eta\mathcal {P}_{\Omega_{t}}\left(V_{t}\right)-X^{\ast}\|_{F}^{2}\\
&=\|X_{t}-X^{\ast}\|_{F}^{2}+\eta^{2}\mathbb{E}_{i_{t}}\|\mathcal {P}_{\Omega_{t}}\left(V_{t}\right)\|_{F}^{2}-2\eta\mathbb{E}_{i_{t}}\langle X_{t}-X^{\ast},\mathcal {P}_{\Omega_{t}}\left(V_{t}\right)\rangle\\
&=\|X_{t}-X^{\ast}\|_{F}^{2}+\eta^{2}\mathbb{E}_{i_{t}}\|\mathcal {P}_{\Omega_{t}}\left(V_{t}\right)\|_{F}^{2}-2\eta\mathbb{E}_{i_{t}}\langle X_{t}-X^{\ast},\mathcal {P}_{\Omega_{t}}\left(V_{t}\right)+\mathcal {P}_{\Omega_{t}^{c}}\left(V_{t}\right)\rangle\\
&=\|X_{t}-X^{\ast}\|_{F}^{2}+\eta^{2}\mathbb{E}_{i_{t}}\|\mathcal {P}_{\Omega_{t}}\left(V_{t}\right)\|_{F}^{2}-2\eta\mathbb{E}_{i_{t}}\langle X_{t}-X^{\ast},V_{t}\rangle\\
&=\|X_{t}-X^{\ast}\|_{F}^{2}+\eta^{2}\mathbb{E}_{i_{t}}\|\mathcal {P}_{\Omega_{t}}\left(V_{t}\right)\|_{F}^{2}-2\eta\langle X_{t}-X^{\ast},\nabla F\left(X_{t}\right)\rangle\\
&\leq\left(F\left(X_{t}\right)-F\left(X^{\ast}\right)\right)/\left(1-\delta_{3r}\right)+8\eta^{2}\left(1+\delta_{3r}\right)\left(F\left(X_{t}\right)-F\left(X^{\ast}\right)\right)+\\
&\eta^{2}\frac{32\delta_{3r}}{1+\delta_{3r}}\left(F\left(\widetilde{X}_{k}\right)-F\left(X^{\ast}\right)\right)-2\eta\left(F\left(X_{t}\right)-F\left(X^{\ast}\right)\right)\\
&=\left(\frac{1}{1-\delta_{3r}}-2\eta\left(1-4\eta\left(1+\delta_{3r}\right)\right)\right)\left(F\left(X_{t}\right)-F\left(X^{\ast}\right)\right)\\
&+\eta^{2}\frac{32\delta_{3r}}{1+\delta_{3r}}\left(F\left(\widetilde{X}_{k}\right)-F\left(X^{\ast}\right)\right)\\
\end{aligned}
\end{equation}
where the inequality follows from {\em Lemma \ref{reop}} and (\ref{rtyu}).
Taking $i_{t}=\{1,\ldots,m\}$ in (\ref{efer}), the equivalent conditions $(3)$ and $(2)$ in {\em{Lemma 4}} (\cite{XYZhou}), we can derive
\begin{equation}\label{bbbvv}
\begin{aligned}
\|X_{t+1}-X^{\ast}\|_{F}^{2}\geq \left(F\left(X_{t+1}\right)-F\left(X^{\ast}\right)\right)/\left(1+\delta_{3r}\right)
\end{aligned}
\end{equation}
Combining (\ref{eoru}) and (\ref{bbbvv}), we obtain
\begin{equation*}
\begin{aligned}
\mathbb{E}_{i_{t}}\left(F\left(X_{t+1}\right)-F\left(X^{\ast}\right)\right)&\leq\left(\frac{1+\delta_{3r}}{1-\delta_{3r}}-2\eta\left(1+\delta_{3r}\right)\left(1-4\eta\left(1+\delta_{3r}\right)\right)\right)\left(F\left(X_{t}\right)-F\left(X^{\ast}\right)\right)\\
&+32\delta_{3r}\eta^{2}\left(F\left(\widetilde{X}_{k}\right)-F\left(X^{\ast}\right)\right)\\
\end{aligned}
\end{equation*}
By recursively applying the above inequality over $t$, and noting that $\widetilde{X}_{k}=X_{0}$ and $\widetilde{X}_{k+1}=X_{n}$, we can obtain
\begin{equation*}
\begin{aligned}
\mathbb{E}\left(F\left(\widetilde{X}_{k+1}\right)-F\left(X^{\ast}\right)\right)&\leq\left(\mu_{3r}^{n}+\frac{\nu_{3r}\left(1-\mu_{3r}^{n}\right)}{1-\mu_{3r}}\right)\mathbb{E}\left(F\left(\widetilde{X}_{k}\right)-F\left(X^{\ast}\right)\right)\\
\end{aligned}
\end{equation*}
where $\mu_{3r}=\frac{1+\delta_{3r}}{1-\delta_{3r}}-2\eta\left(1+\delta_{3r}\right)\left(1-4\eta\left(1+\delta_{3r}\right)\right)$ and $\nu_{3r}=32\delta_{3r}\eta^{2}$.

By choosing $\delta_{3r}\leq\frac{1}{20}$ and $\frac{2\left(1+\delta_{3r}\right)\sqrt{1-\delta_{3r}}-\sqrt{-68\delta_{3r}^{3}-388\delta_{3r}^{2}-60\delta_{3r}+4}}{\left(16\delta_{3r}^{2}+96\delta_{3r}+16\right)\sqrt{1-\delta_{3r}}}\leq\eta\leq\frac{2\left(1+\delta_{3r}\right)\sqrt{1-\delta_{3r}}+\sqrt{-68\delta_{3r}^{3}-388\delta_{3r}^{2}-60\delta_{3r}+4}}{\left(16\delta_{3r}^{2}+96\delta_{3r}+16\right)\sqrt{1-\delta_{3r}}}$, we have $\beta_{3r}=\left(\mu_{3r}^{n}+\frac{\nu_{3r}\left(1-\mu_{3r}^{n}\right)}{1-\mu_{3r}}\right)<1$.
\end{proof}
\begin{figure}[H]
\centering
\subfigure[Frequency of exact recovery as a function of rank]{
\begin{minipage}[b]{0.321\textwidth}
\label{fig1: parameters-a}\includegraphics[width=1.1\textwidth]{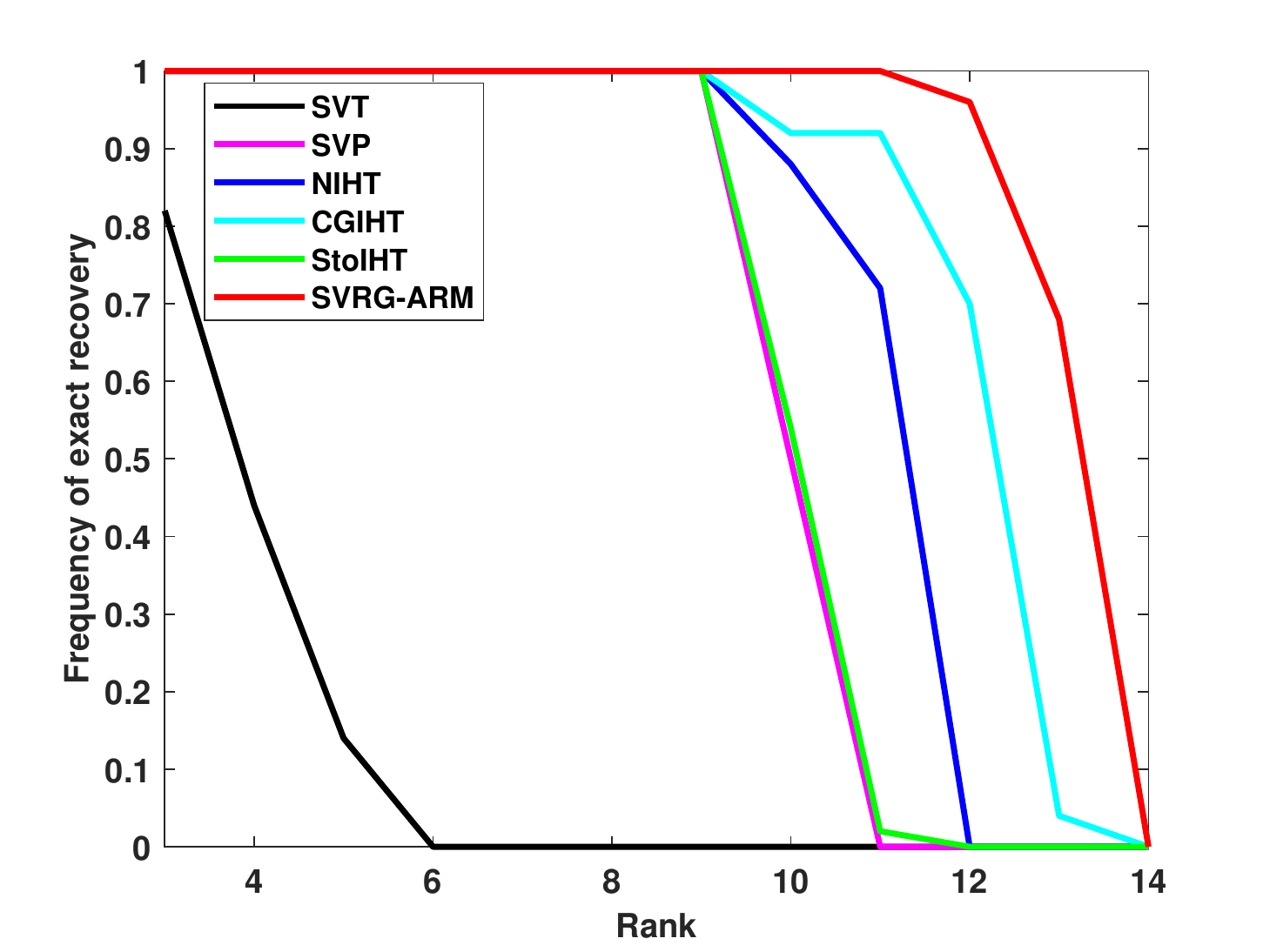}
\end{minipage}}
\subfigure[Convergence speed]{
\begin{minipage}[b]{0.321\textwidth}
\label{fig: parameters-b} \includegraphics[width=1.1\textwidth]{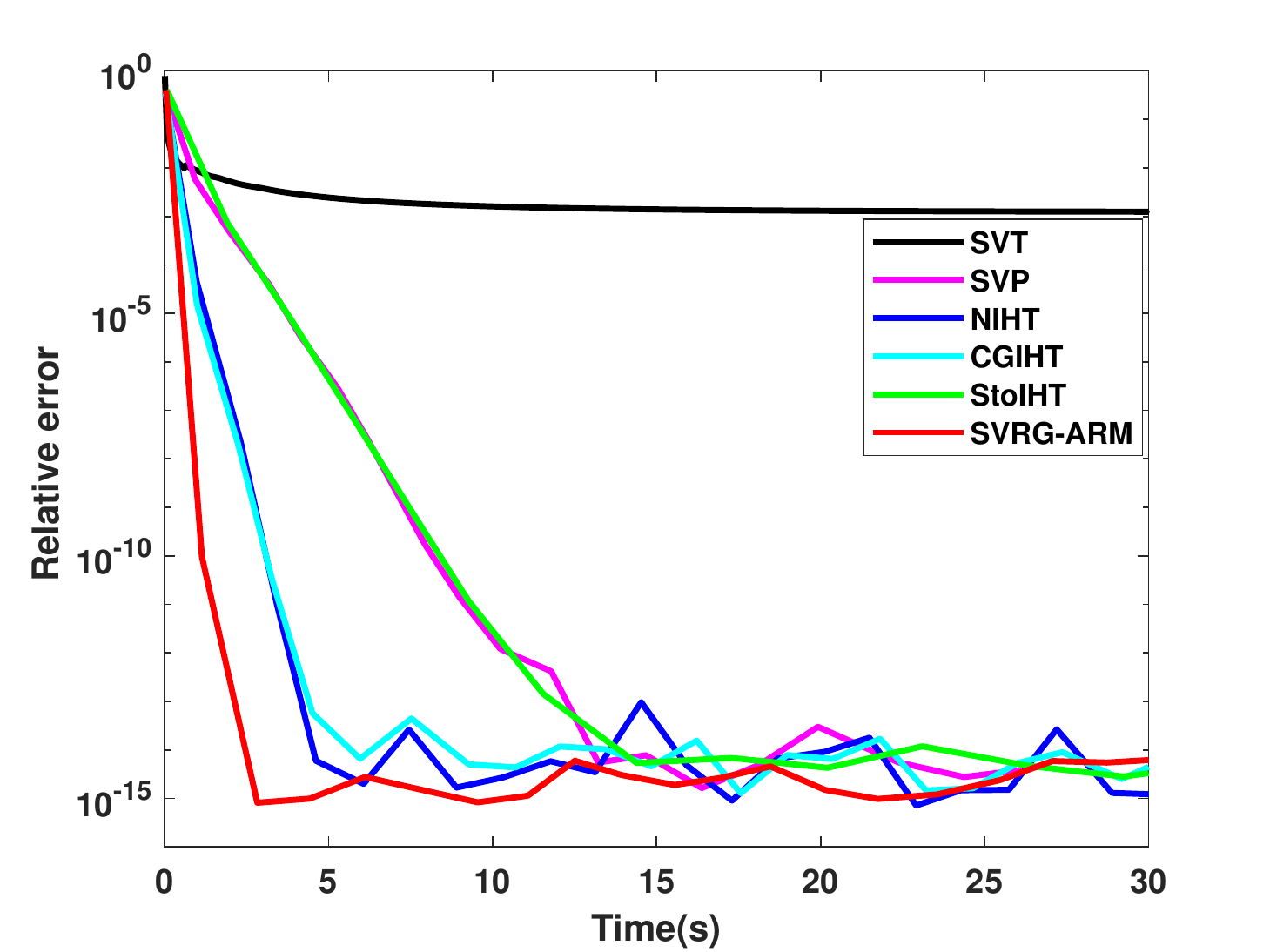}
\end{minipage}}
\subfigure[Normalized mean square
error as a function of noise level]{
\begin{minipage}[b]{0.321\textwidth}
\label{fig: parameters-c} \includegraphics[width=1.1\textwidth]{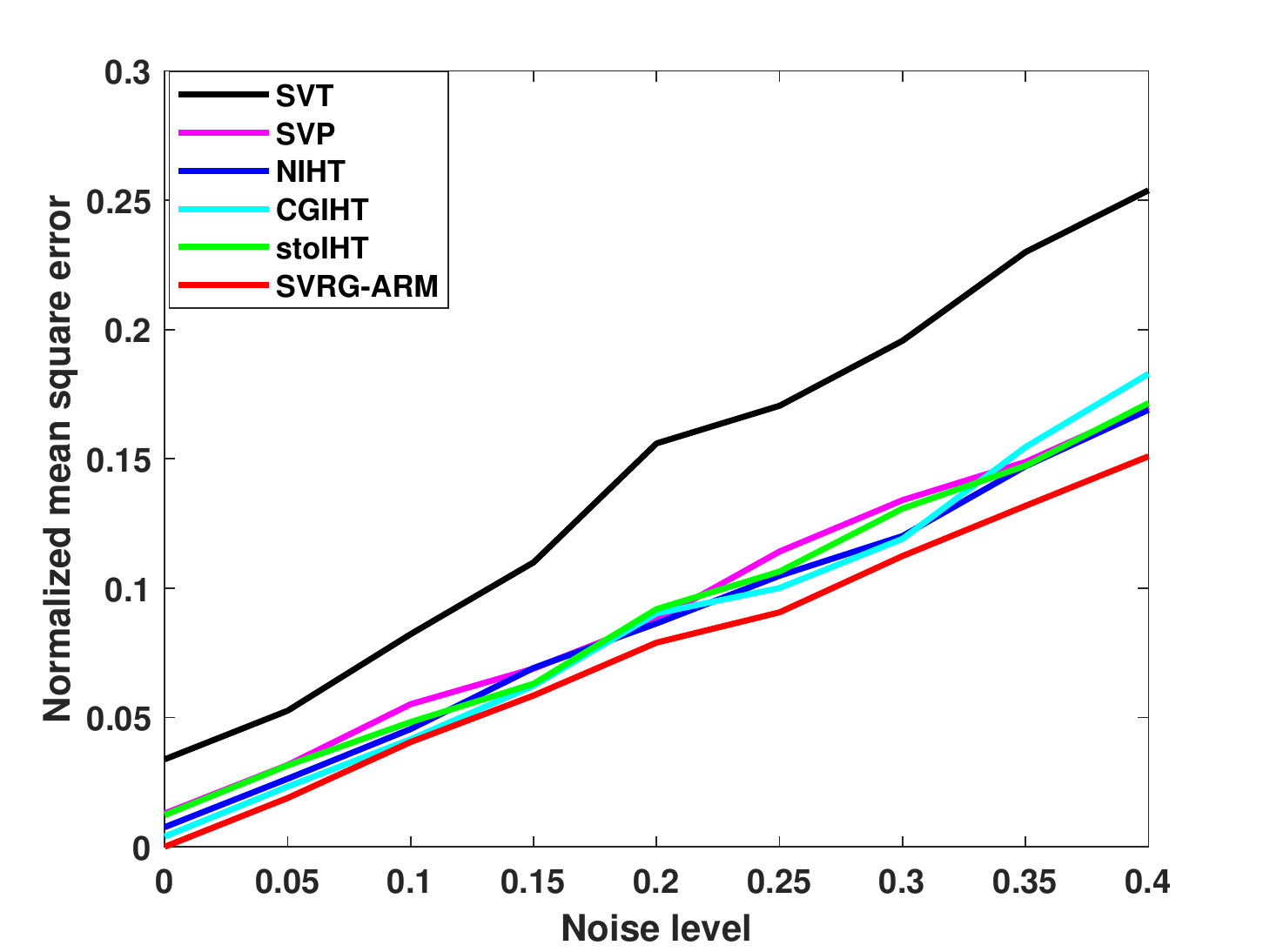}
\end{minipage}}
\quad
\caption{\small (a) Frequency of exact recovery as a function of rank. (b) Convergence speed. (c) Normalized mean square
error as a function of noise level}
\label{fig1}
\end{figure}

\begin{figure}[H]
\centering
\subfigure[SVT]{
\begin{minipage}[b]{0.321\textwidth}
\label{fig2: parameters-a}\includegraphics[width=1.1\textwidth]{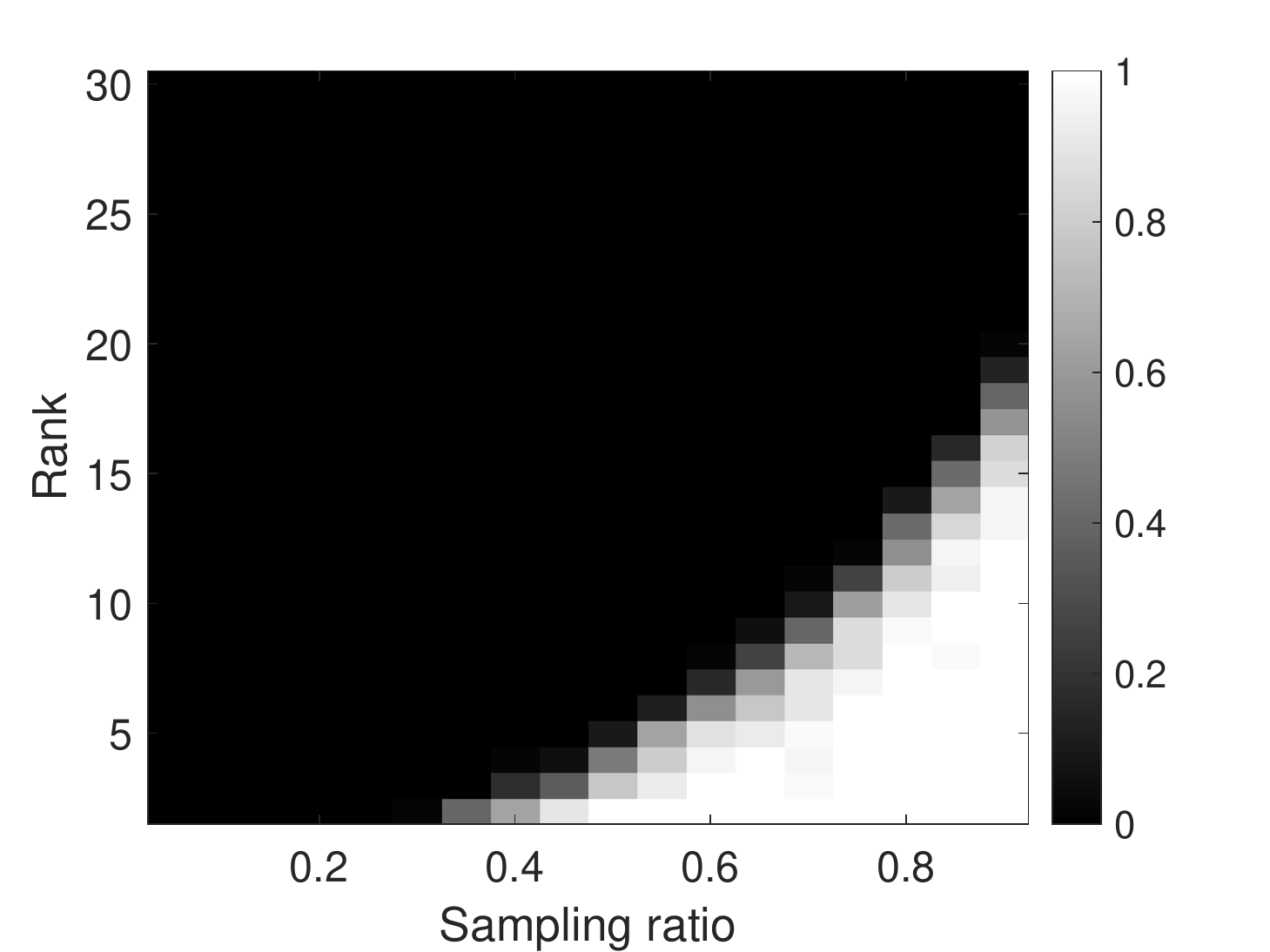}
\end{minipage}}
\subfigure[SVP]{
\begin{minipage}[b]{0.321\textwidth}
\label{fig2: parameters-b} \includegraphics[width=1.1\textwidth]{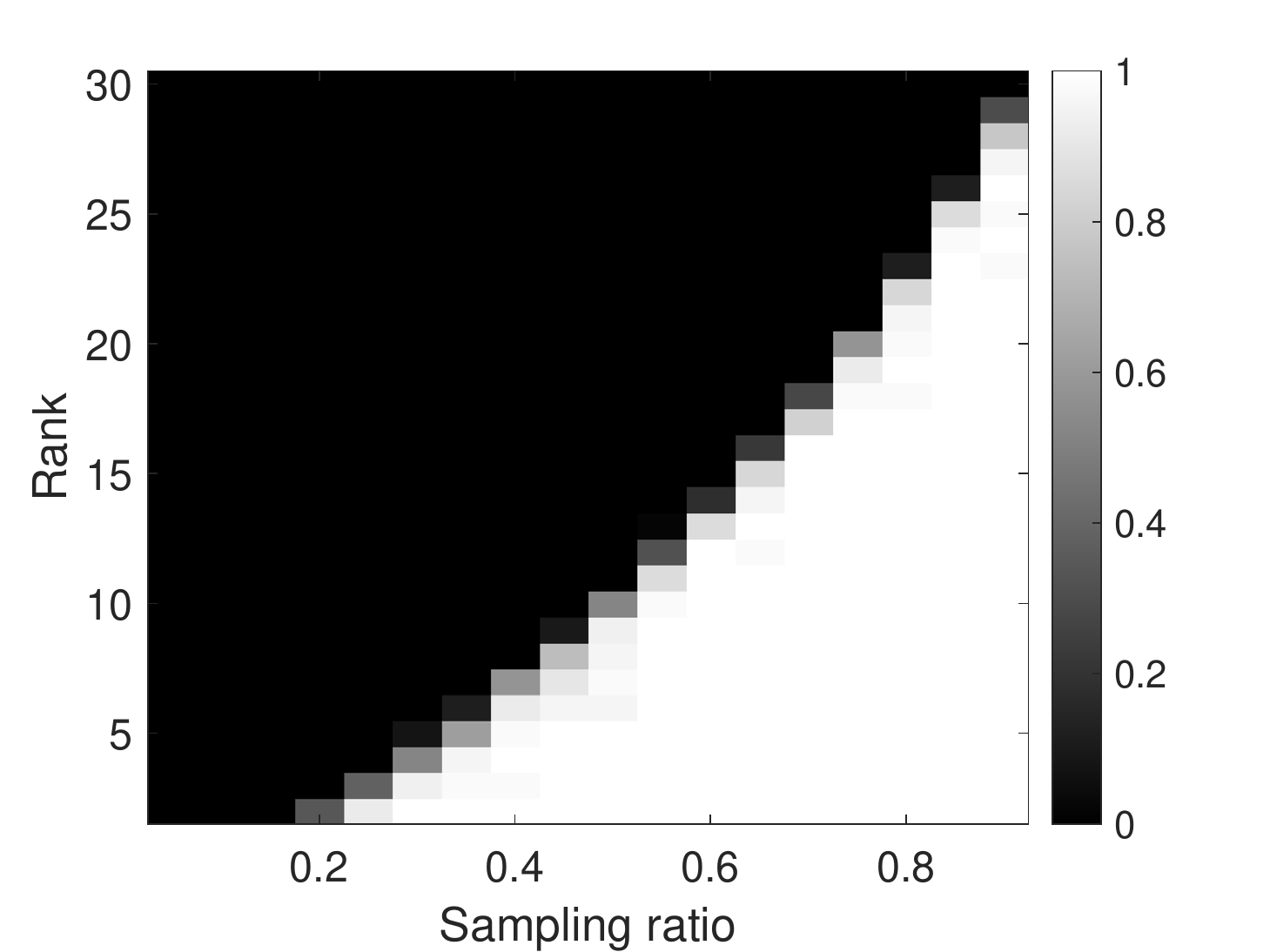}
\end{minipage}}
\subfigure[NIHT]{
\begin{minipage}[b]{0.321\textwidth}
\label{fig2: parameters-c} \includegraphics[width=1.1\textwidth]{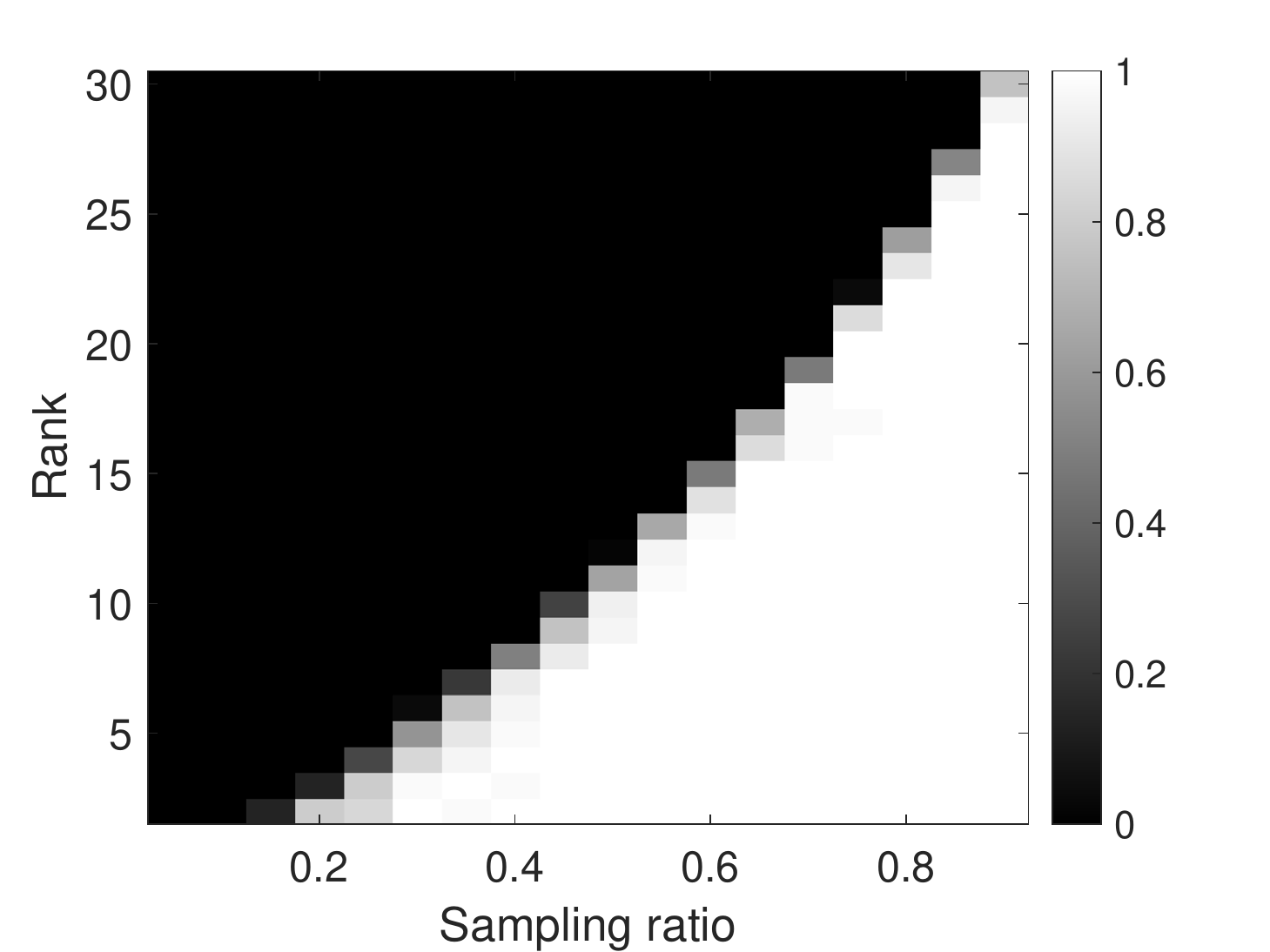}
\end{minipage}}
\subfigure[CGIHT]{
\begin{minipage}[b]{0.321\textwidth}
\label{fig2: parameters-d}\includegraphics[width=1.1\textwidth]{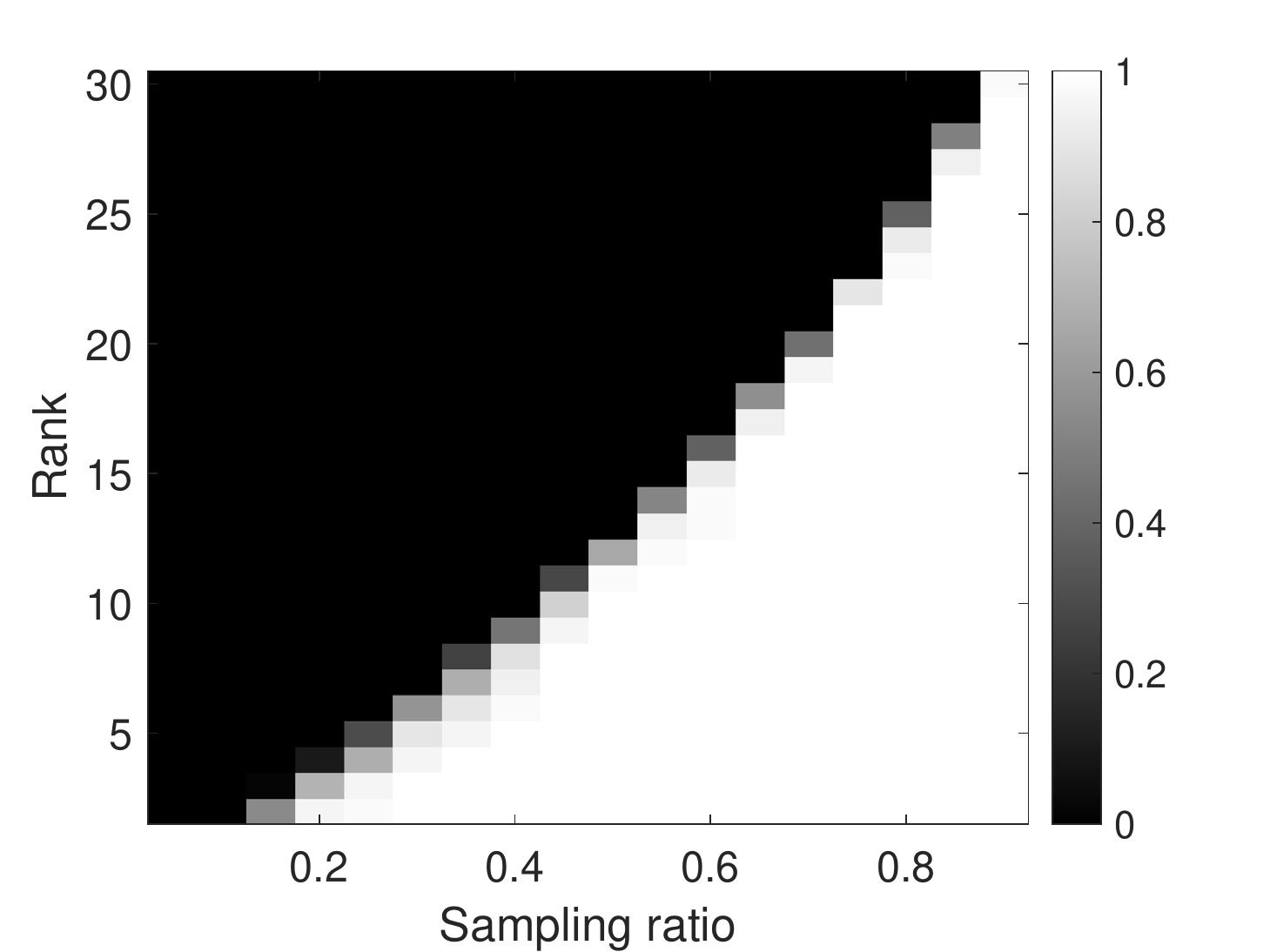}
\end{minipage}}
\subfigure[StoIHT]{
\begin{minipage}[b]{0.321\textwidth}
\label{fig2: parameters-e} \includegraphics[width=1.1\textwidth]{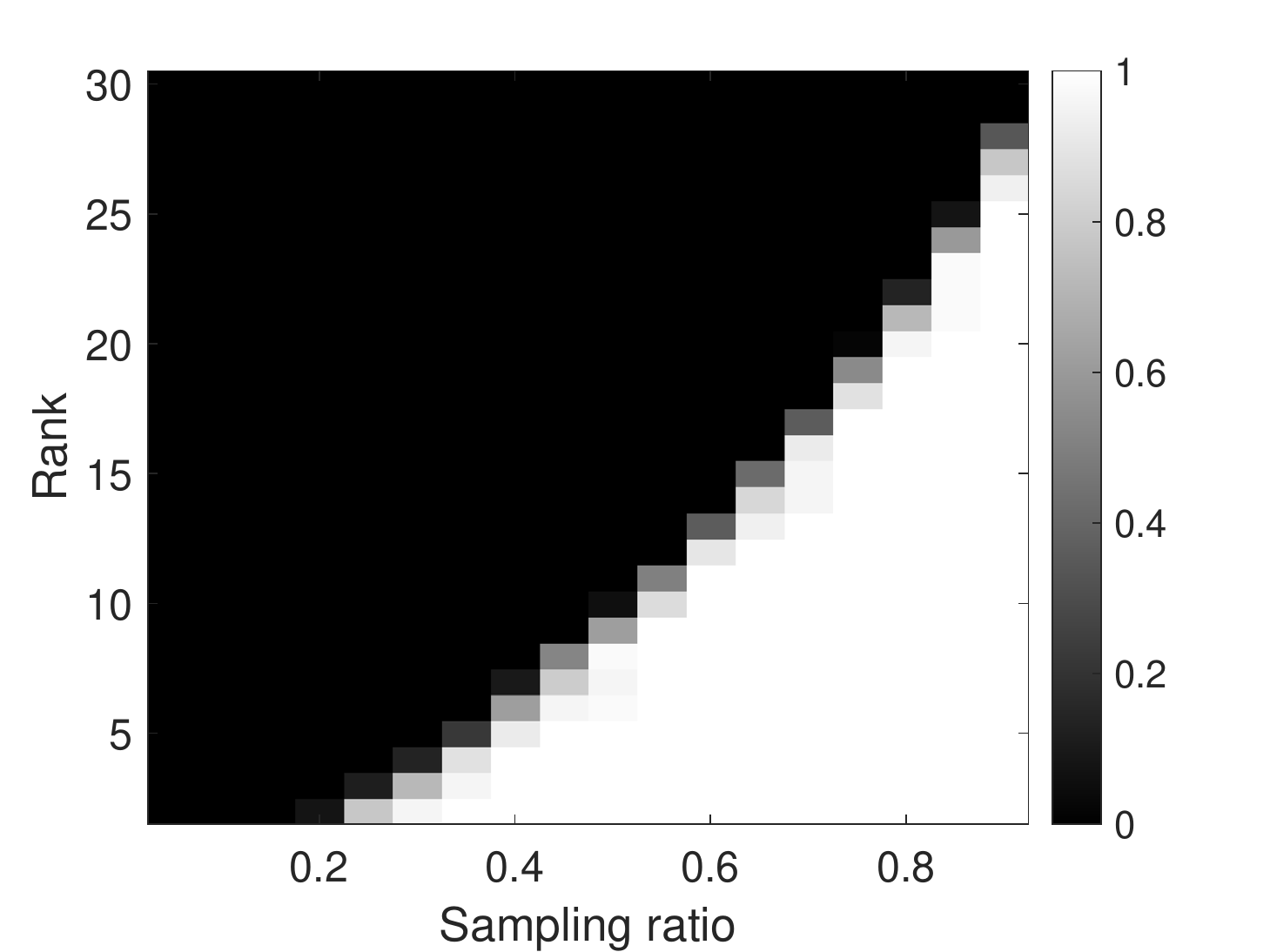}
\end{minipage}}
\subfigure[SVRG-ARM]{
\begin{minipage}[b]{0.321\textwidth}
\label{fig2: parameters-f} \includegraphics[width=1.1\textwidth]{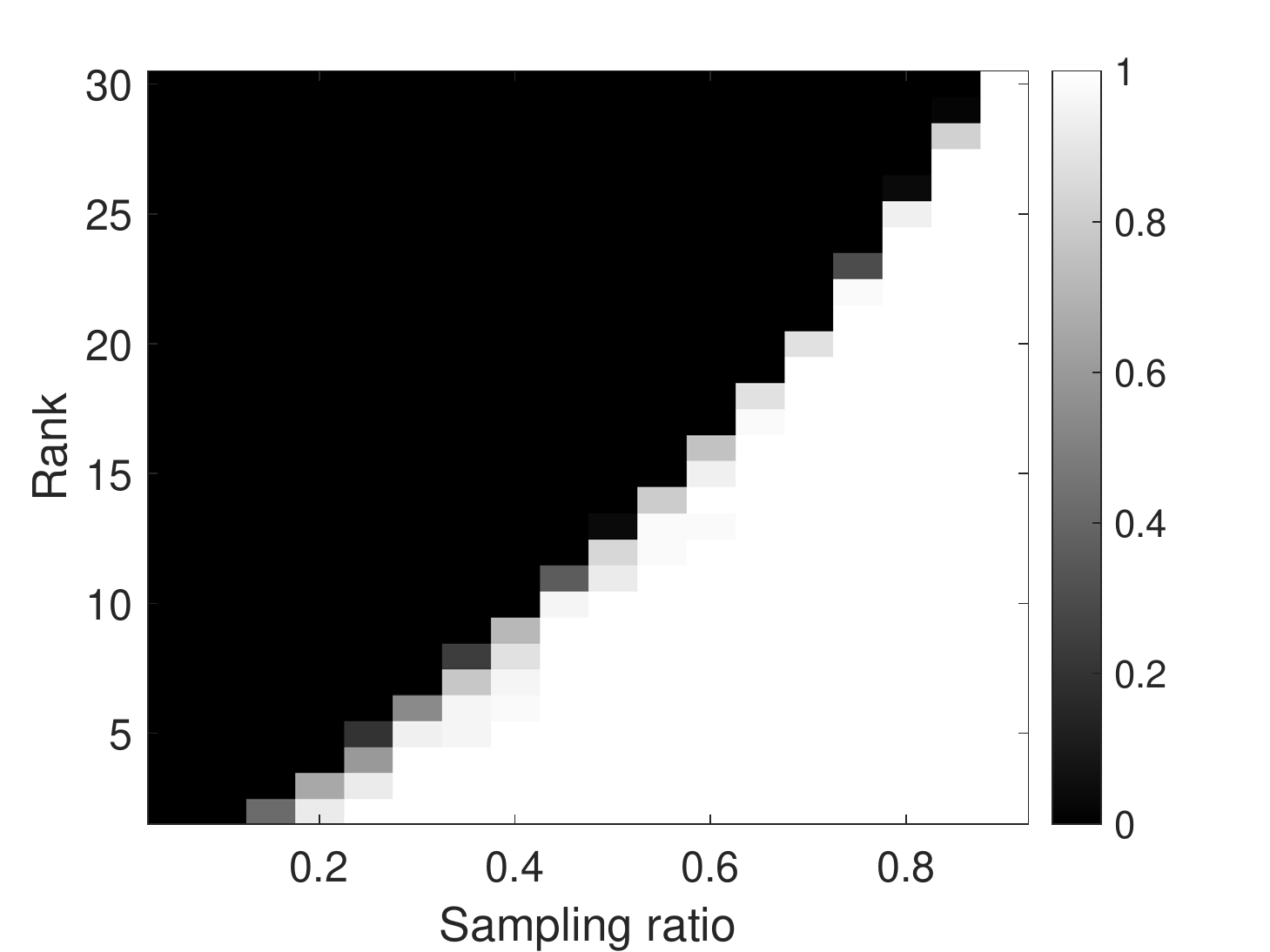}
\end{minipage}}
\quad
\caption{\small Phase transition of low-rank matrix completion using (a) SVT. (b) SVP. (c) NIHT. (d) CGIHT. (c) StoIHT. (d) SVRG-ARM.}
\label{figure2}
\end{figure}

By {\em Theorem \ref{ertyui}}, we have $\mathbb{E}\left(F\left(\widetilde{X}_{k}\right)-F\left(X^{\ast}\right)\right)\leq\beta_{3r}^{k}\mathbb{E}\left(F\left(\widetilde{X}_{0}\right)-F\left(X^{\ast}\right)\right)$. To obtain accuracy of $\epsilon$, i.e., $\mathbb{E}\left(F\left(\widetilde{X}_{k}\right)-F\left(X^{\ast}\right)\right)\leq\epsilon$, SVRG-ARM needs to take $k=\mathcal {O}\left(\text{log}\left(1/\varepsilon\right)\right)$ outer loops. The computational complexity of the proposed algorithm mainly includes two parts: the computation of gradients and singular value decompositions. The complexity of calculating gradients is $\mathcal {O}\left(m+nb\right)$, where $n$ is the number of inner loops and $b=$max$\{i_{0},\ldots,i_{n-1}\}$. Besides, a singular value decomposition is required in each iteration to project the variable $W^{t}$ back onto the rank $r$ matrix feasible solution space and the corresponding complexity can be $\mathcal {O}\left(r^{3}\right)$, where $r$ is the rank of low-rank matrix \cite{Kwei}. Therefore, the overall computational complexity of SVRG-ARM is $\mathcal {O}\left(\left(m+nb+r^{3}\right)\text{log}\left(1/\varepsilon\right)\right)$. If $f_{i}(x)$ is $L$--Lipschitz smooth and $F(x)$ is $\mu$-strongly convex, deterministic full gradient descent method needs $\mathcal {O}\left(\sqrt{\kappa}\text{log}\left(1/\varepsilon\right)\right)$ iterations to find an $\epsilon$-accurate solution, where $\kappa$ is the condition number $L/\mu$ \cite{GD1,GD2}. The overall computational complexity of deterministic full gradient descent method is $\mathcal {O}\left(\left(m+r^{3}\right)\sqrt{\kappa}\text{log}\left(1/\varepsilon\right)\right)$. Thus SVRG-ARM presents a significant improvement over deterministic full gradient descent method when $\kappa$ is large, which has also been validated by numerical experiments in Section $5.1$.
\begin{figure}[H]
\centering
\subfigure[Original image]{
\begin{minipage}[b]{0.23\textwidth}
\label{fig3: parameters-a}\includegraphics[width=1.1\textwidth]{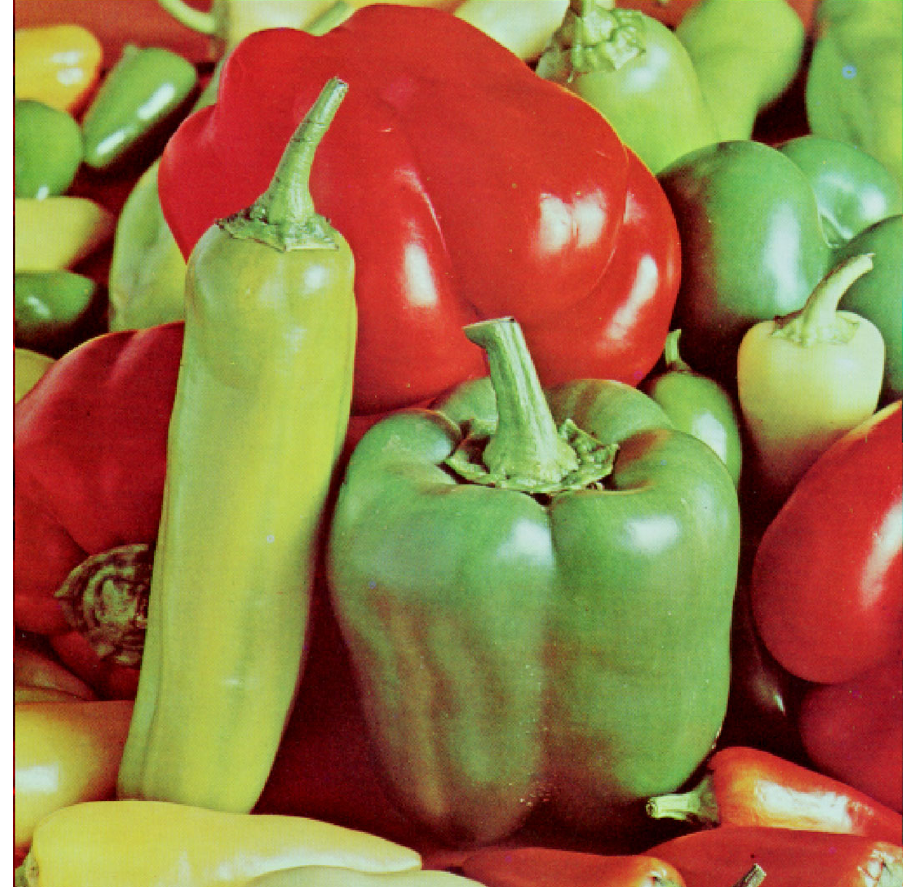}
\end{minipage}}
\subfigure[Observed image with missing pixels]{
\begin{minipage}[b]{0.23\textwidth}
\label{fig3: parameters-b} \includegraphics[width=1.1\textwidth]{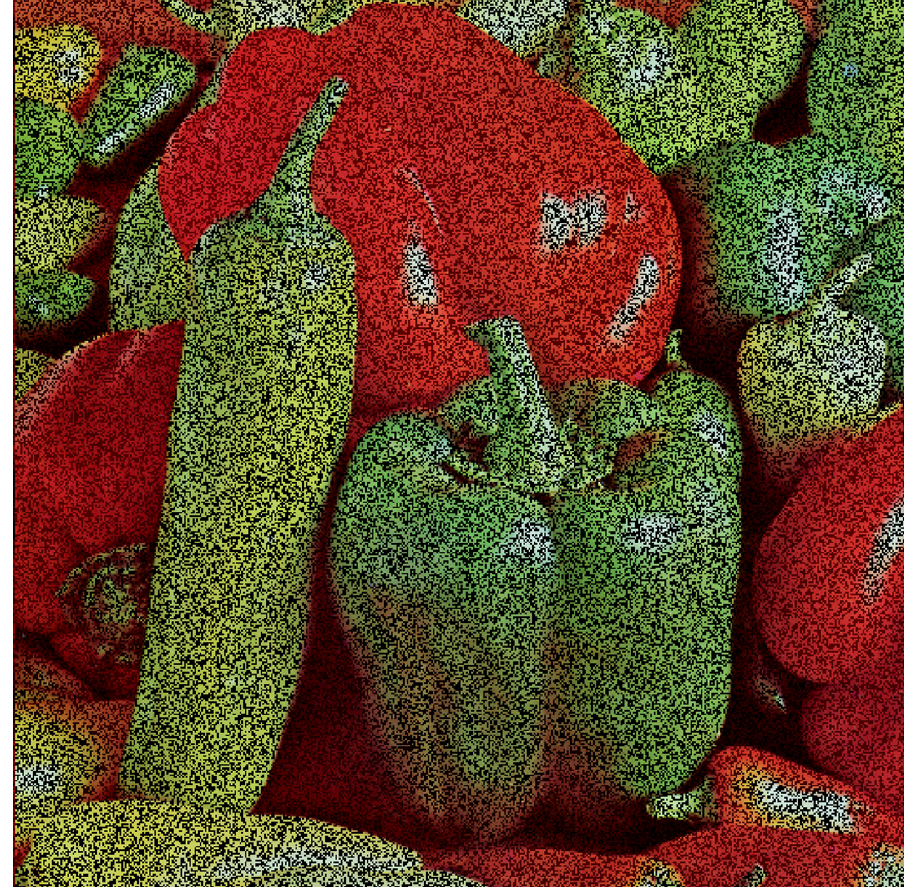}
\end{minipage}}
\subfigure[SVT $29.1242/0.8874$]{
\begin{minipage}[b]{0.23\textwidth}
\label{fig3: parameters-c} \includegraphics[width=1.1\textwidth]{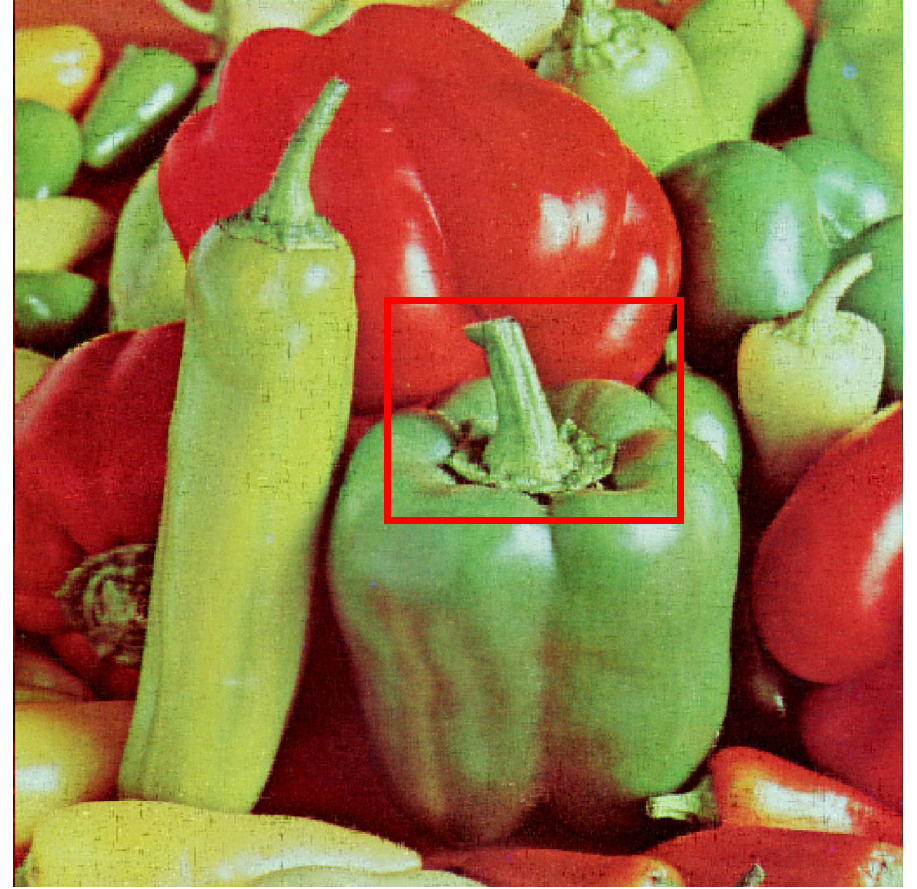}
\end{minipage}}
\subfigure[SVP $31.5434/0.9305$]{
\begin{minipage}[b]{0.23\textwidth}
\label{fig3: parameters-e} \includegraphics[width=1.1\textwidth]{pepper_IHT.pdf}
\end{minipage}}
\subfigure[NIHT $32.0818/0.9375$]{
\begin{minipage}[b]{0.23\textwidth}
\label{fig3: parameters-f} \includegraphics[width=1.1\textwidth]{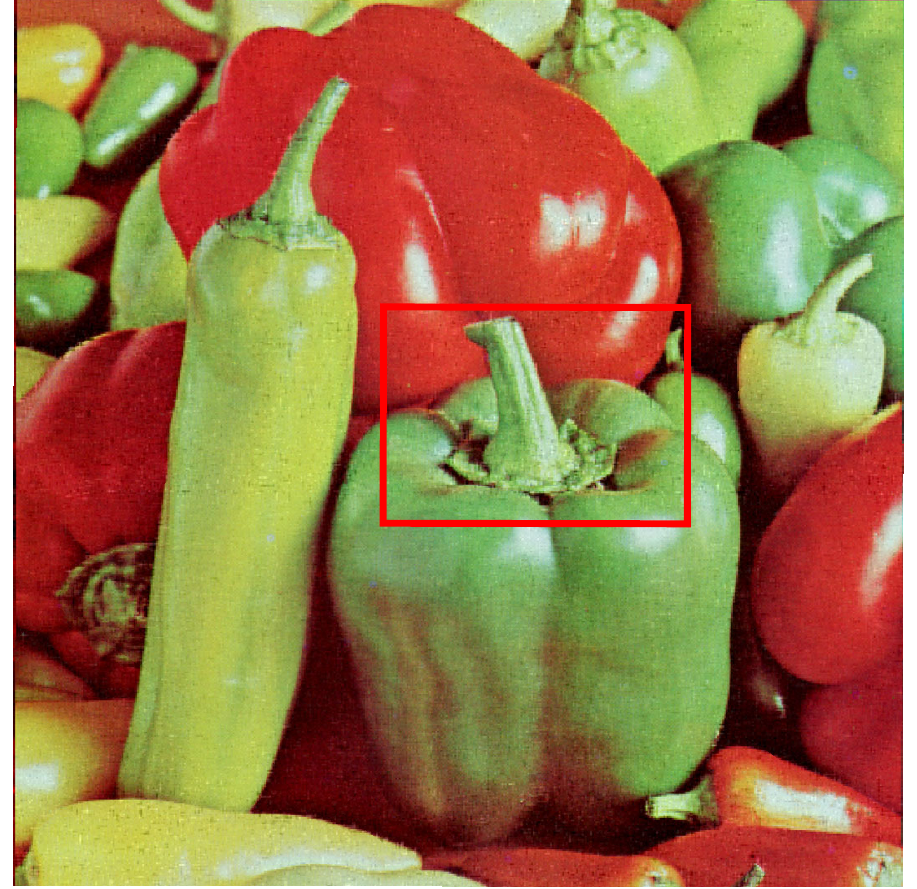}
\end{minipage}}
\subfigure[CGIHT $31.3653/0.9247$]{
\begin{minipage}[b]{0.23\textwidth}
\label{fig3: parameters-h} \includegraphics[width=1.1\textwidth]{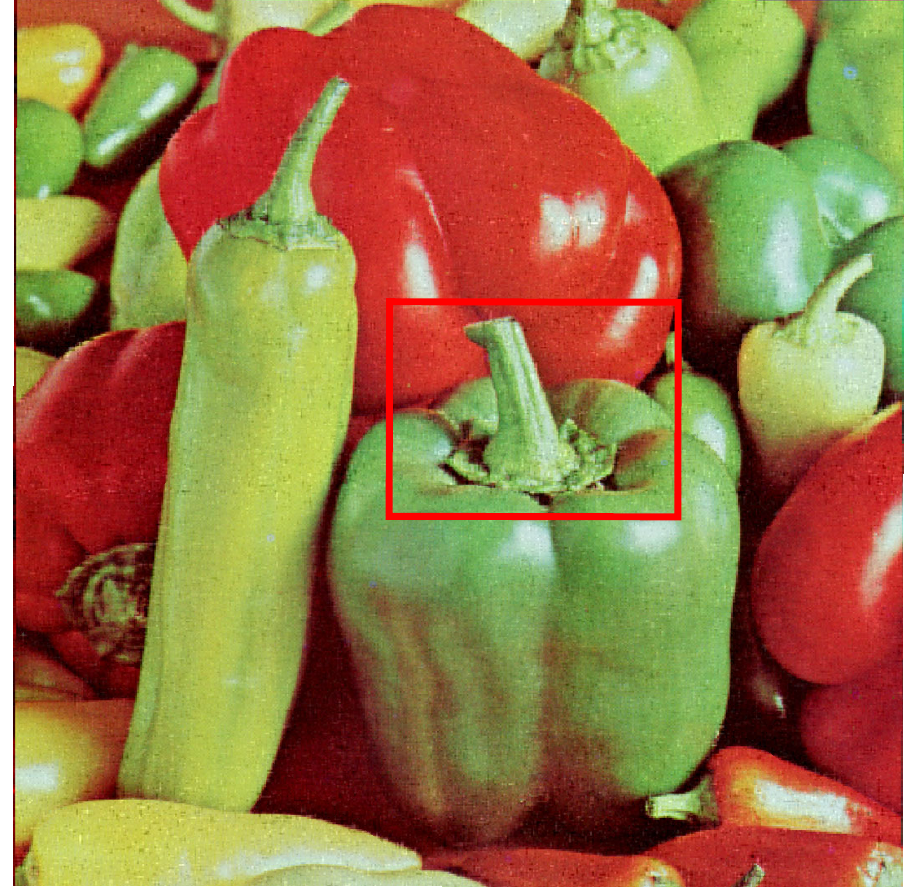}
\end{minipage}}
\subfigure[StoIHT $30.5716/0.8988$]{
\begin{minipage}[b]{0.23\textwidth}
\label{fig3: parameters-i} \includegraphics[width=1.1\textwidth]{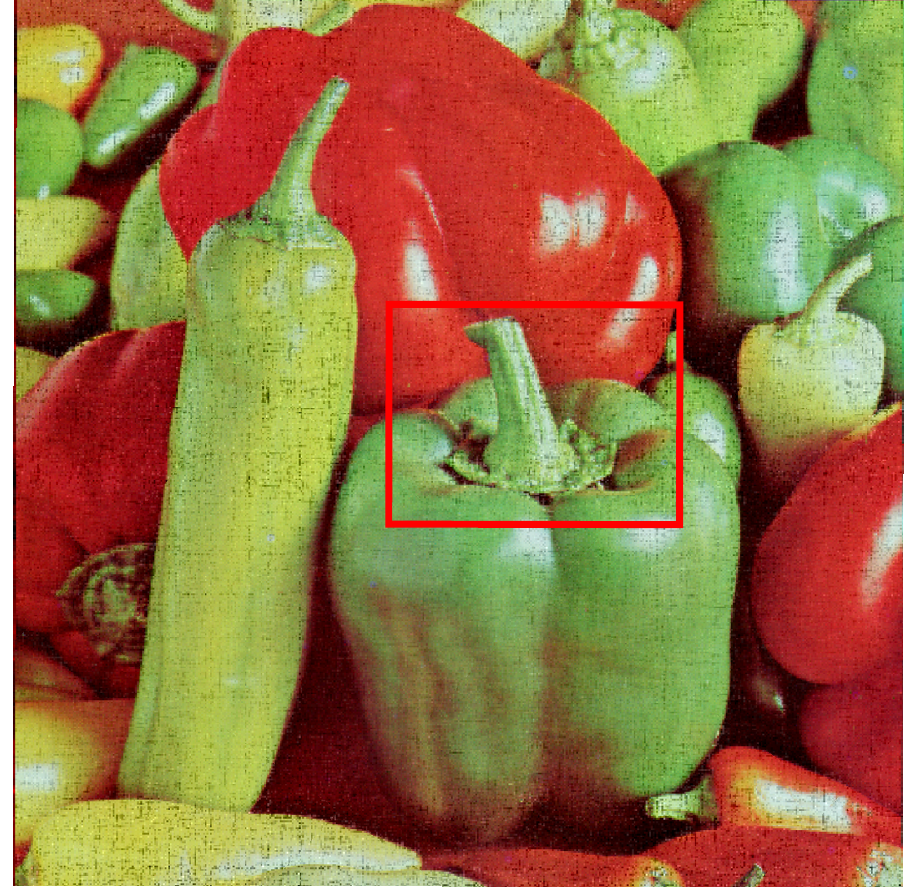}
\end{minipage}}
\subfigure[SVRG-ARM $\bf{32.1117/0.938}$]{
\begin{minipage}[b]{0.23\textwidth}
\label{fig3: parameters-j} \includegraphics[width=1.1\textwidth]{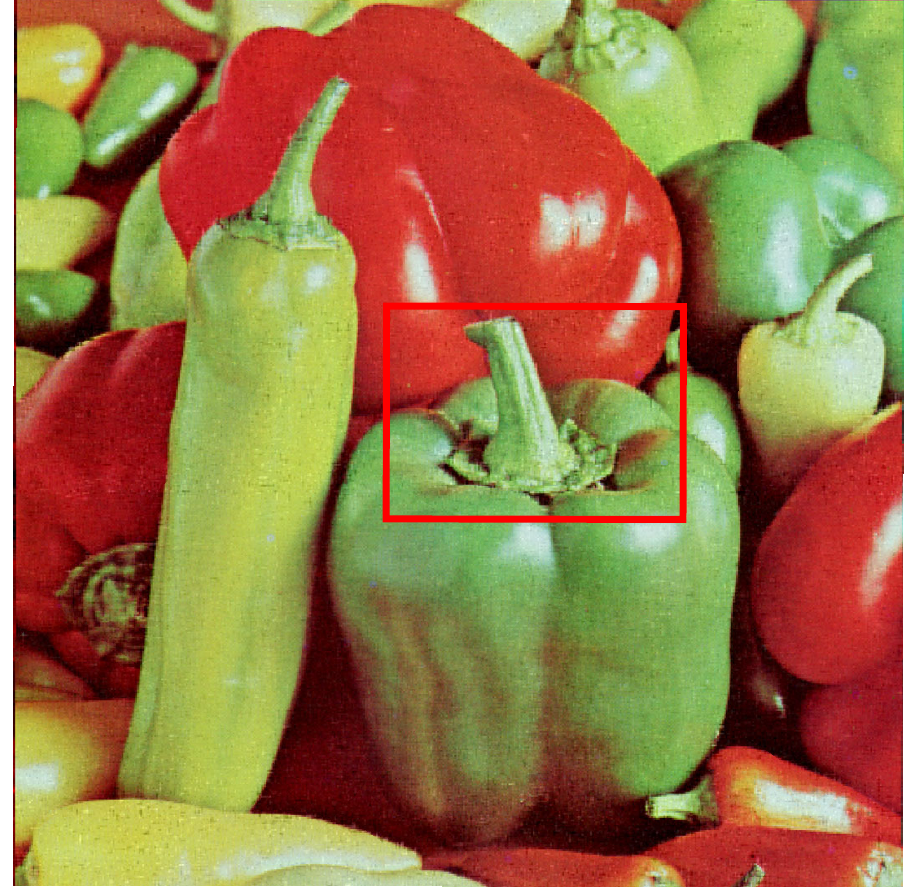}
\end{minipage}}
\quad
\caption{\small Comparison of matrix completion algorithms for image inpainting.(a) Original image. (b) Observed image with missing pixels. (c)-(h) Recovered images by SVT, SVP, NIHT, CGIHT, StoIHT, SVRG-ARM.}
\label{fig4}
\end{figure}

\section{Numerical experiments}
In this section, we present numerical results on synthetic and real data to validate the proposed algorithm. For comprehensive and complete comparisons, we first compare performance within the class of gradient descent algorithms including singular value thresholding (SVT) \cite{SVT}, singular value projection (SVP) \cite{PJain}, normalized iterative hard thresholding (NIHT) \cite{Tannern}, conjugate gradient iterative hard thresholding (CGIHT) \cite{Kwei}, stochastic iterative hard thresholding (StoIHT) \cite{Nguyen}. Among these algorithms, SVP is the simplest iterative hard thresholding gradient descent algorithm with fixed stepsize, while SVT is the iterative soft-thresholding gradient descent algorithm. NIHT is the modified iterative hard thresholding gradient descent algorithm with an adaptive stepsize. SVP and NIHT need to calculate the full gradient at each iteration. CGIHT generates the current estimate along the Riemannian conjugate gradient descent. StoIHT is based on stochastic gradient descent, and SVRG-ARM is designed to reduce the variance of stochastic gradient descent. In addition, we utilize Barzilai-Borwein (BB) \cite{Barzilai,CTan} method to automatically calculate step sizes, where we set the step size $\eta_{k}=\|\widetilde{X}_{k}-\widetilde{X}_{k-1}\|_{F}^{2}/(n\langle\widetilde{X}_{k}-\widetilde{X}_{k-1},g_{k}-g_{k-1}\rangle)$ at each iteration. 

Then the overall performance of SVRG-ARM in terms of execution-time and frequency of exact recovery is compared with other state-of-the-art algorithms including matrix factorization based method solved by ScaledASD \cite{yinn}, nuclear norm minimization (NNM) based
method solved by augmented Lagrange multiplier method \cite{cLin}, iterative reweighted nuclear norm (IRNN) \cite{bZhang,Mohan,Fornasier}, truncated nuclear norm regularization (TNNR) \cite{YHu,bZhang},  $\ell_{p}$ quasi-norm ($0<p<1$) \cite{FPNie,YXie}.

The associated matlab codes can be downloaded from the authors’ webpages or provided by authors in personal communication. A matlab implementation of the proposed algorithm is also available at \url{https://www.dropbox.com/s/9gte2as7gcarl80/SVRG-ARM.zip?dl=0}.
\begin{figure}[H]
\centering
\subfigure[Original image]{
\begin{minipage}[b]{0.25\textwidth}
\label{fig3: parameters-a}\includegraphics[width=1.1\textwidth]{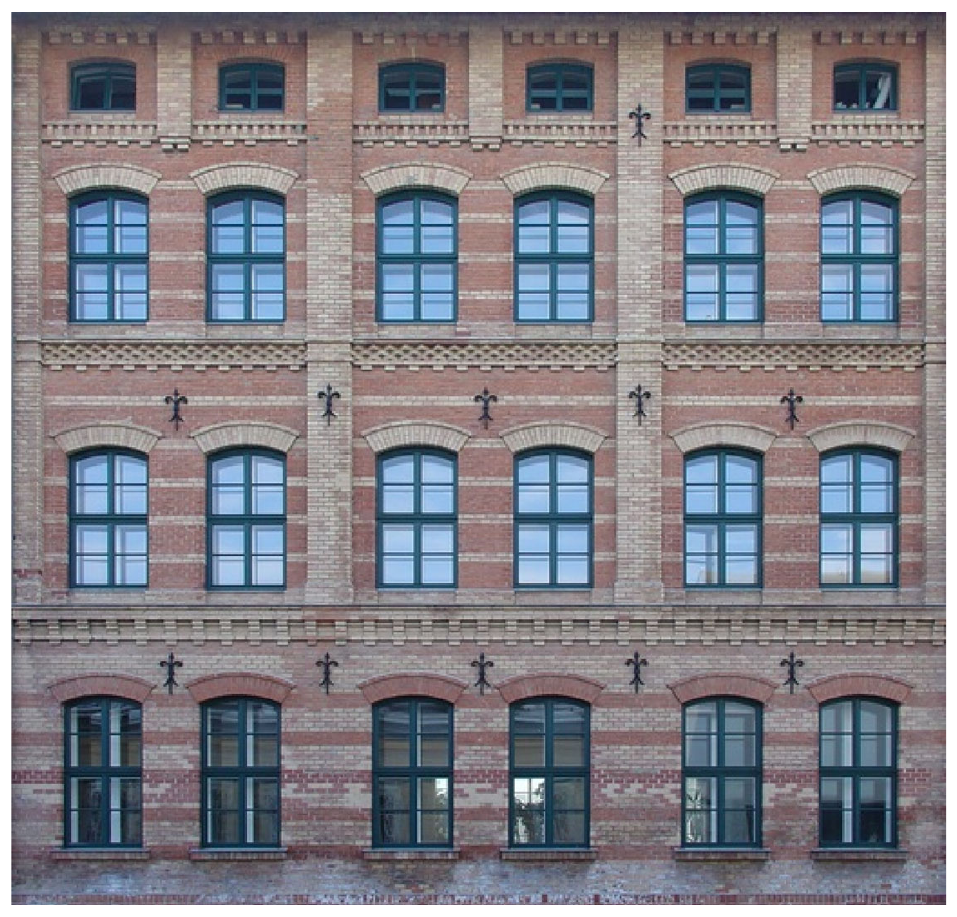}
\end{minipage}}
\subfigure[Observed image with missing pixels]{
\begin{minipage}[b]{0.25\textwidth}
\label{fig3: parameters-b} \includegraphics[width=1.1\textwidth]{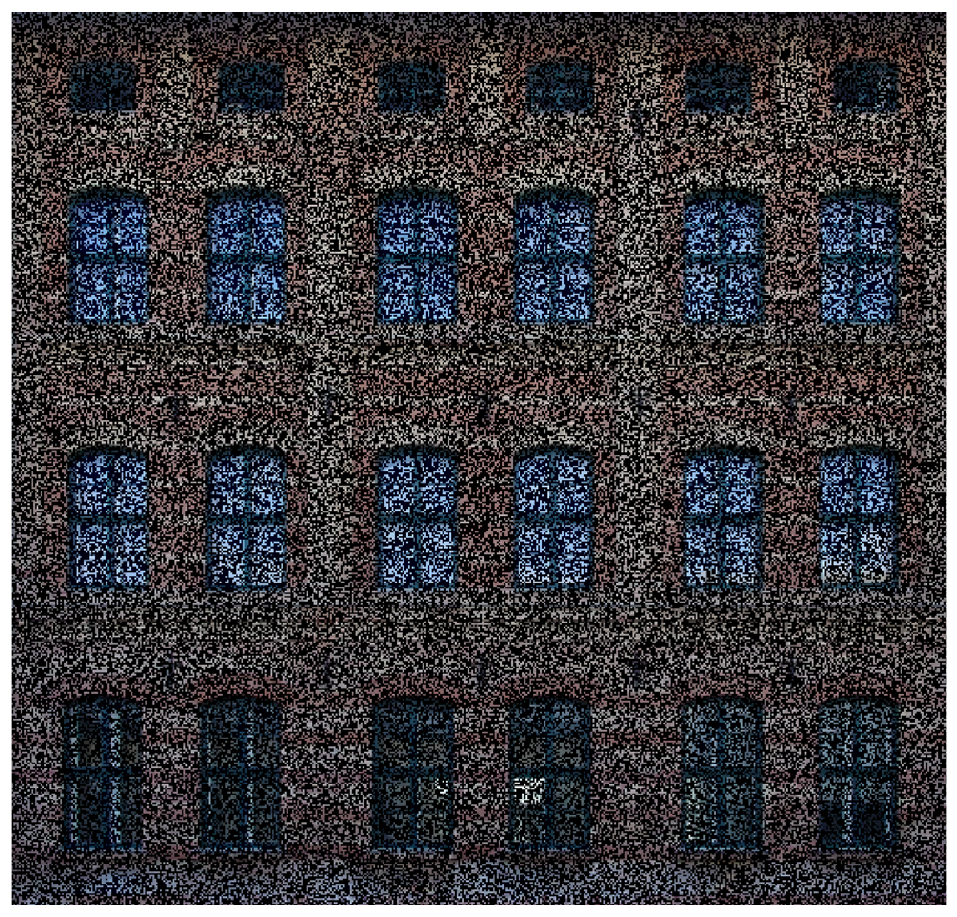}
\end{minipage}}
\subfigure[SVT $21.0977/0.7931$]{
\begin{minipage}[b]{0.25\textwidth}
\label{fig3: parameters-c} \includegraphics[width=1.1\textwidth]{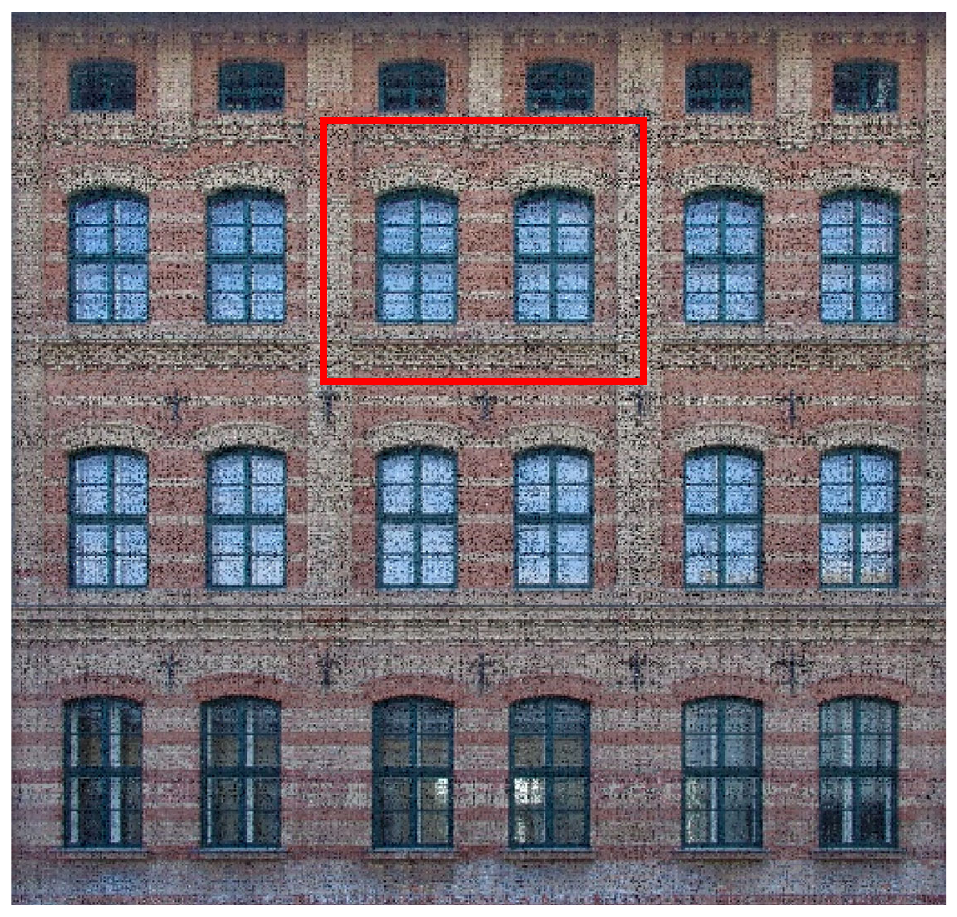}
\end{minipage}}
\subfigure[SVP $21.7933/0.8044$]{
\begin{minipage}[b]{0.25\textwidth}
\label{fig3: parameters-e} \includegraphics[width=1.1\textwidth]{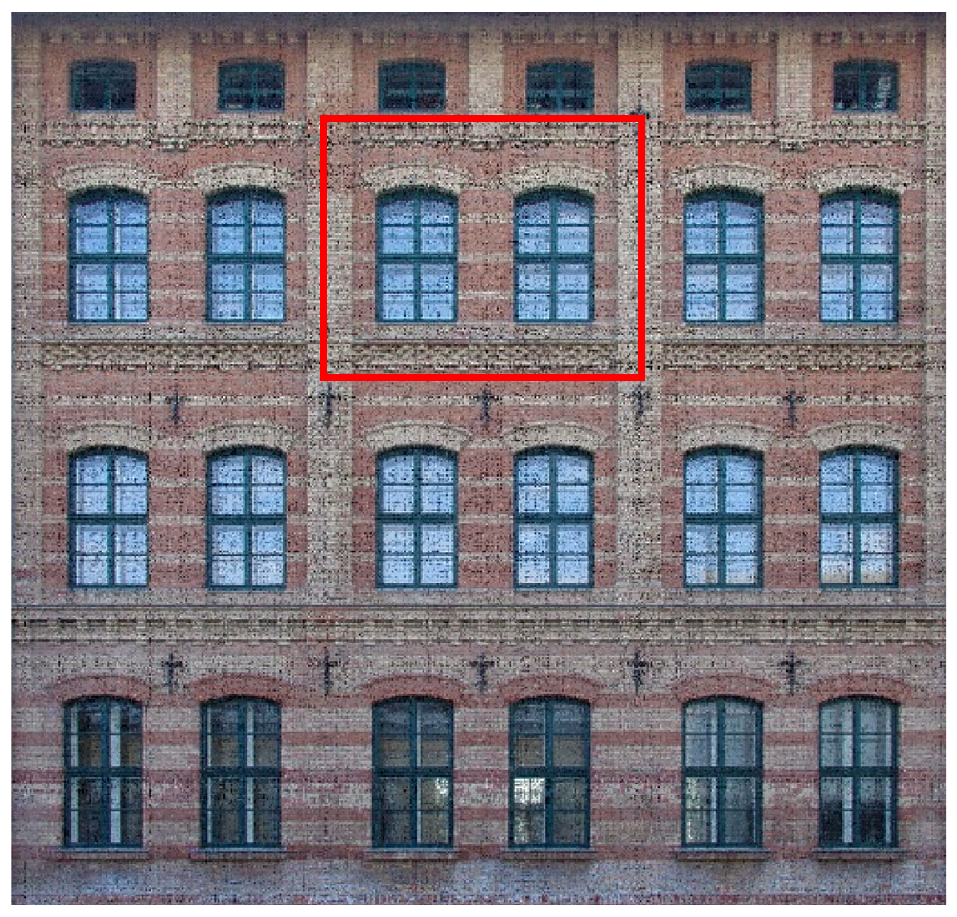}
\end{minipage}}
\subfigure[NIHT $25.0522/0.8833$]{
\begin{minipage}[b]{0.25\textwidth}
\label{fig3: parameters-f} \includegraphics[width=1.1\textwidth]{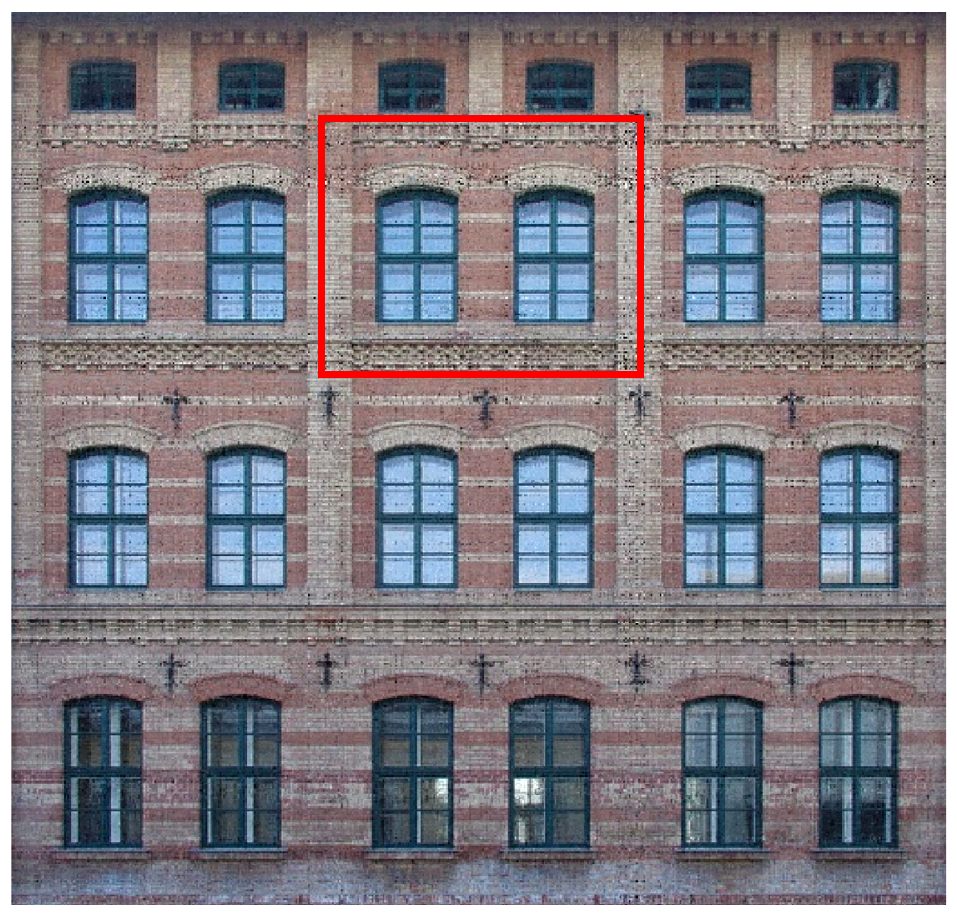}
\end{minipage}}
\subfigure[CGIHT $24.2795/0.8689$]{
\begin{minipage}[b]{0.25\textwidth}
\label{fig3: parameters-h} \includegraphics[width=1.1\textwidth]{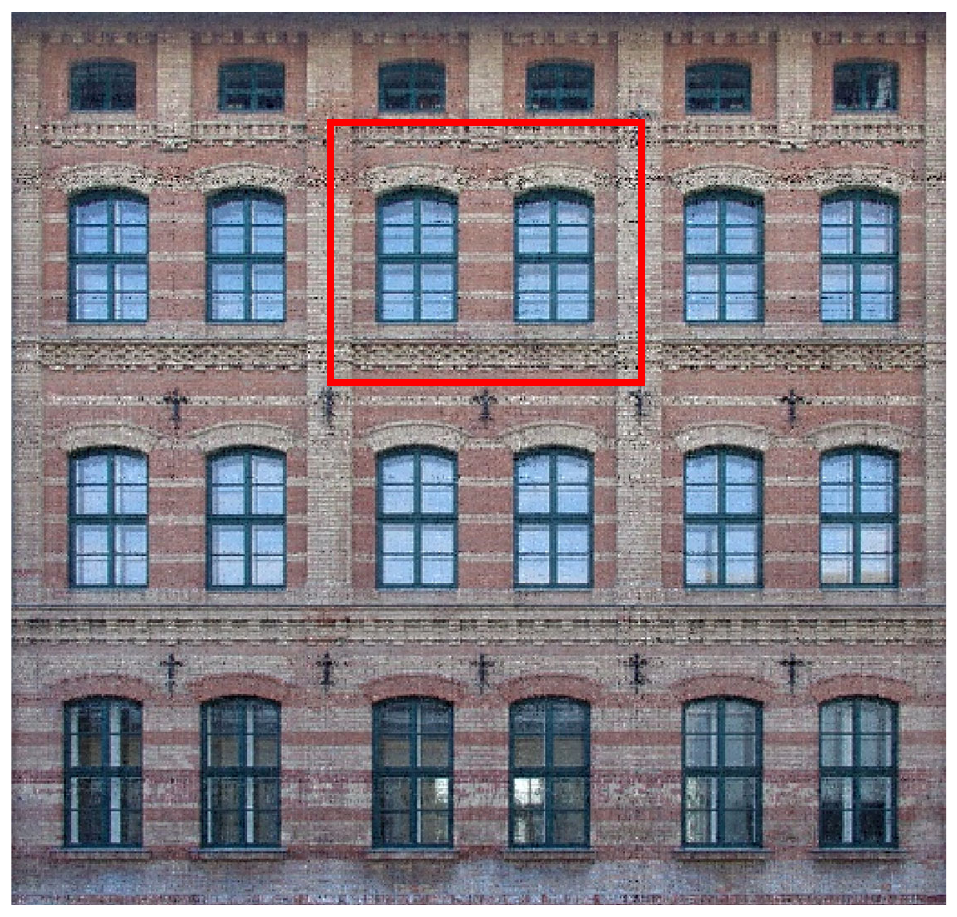}
\end{minipage}}
\subfigure[StoIHT $22.8890/0.8247$]{
\begin{minipage}[b]{0.25\textwidth}
\label{fig3: parameters-i} \includegraphics[width=1.1\textwidth]{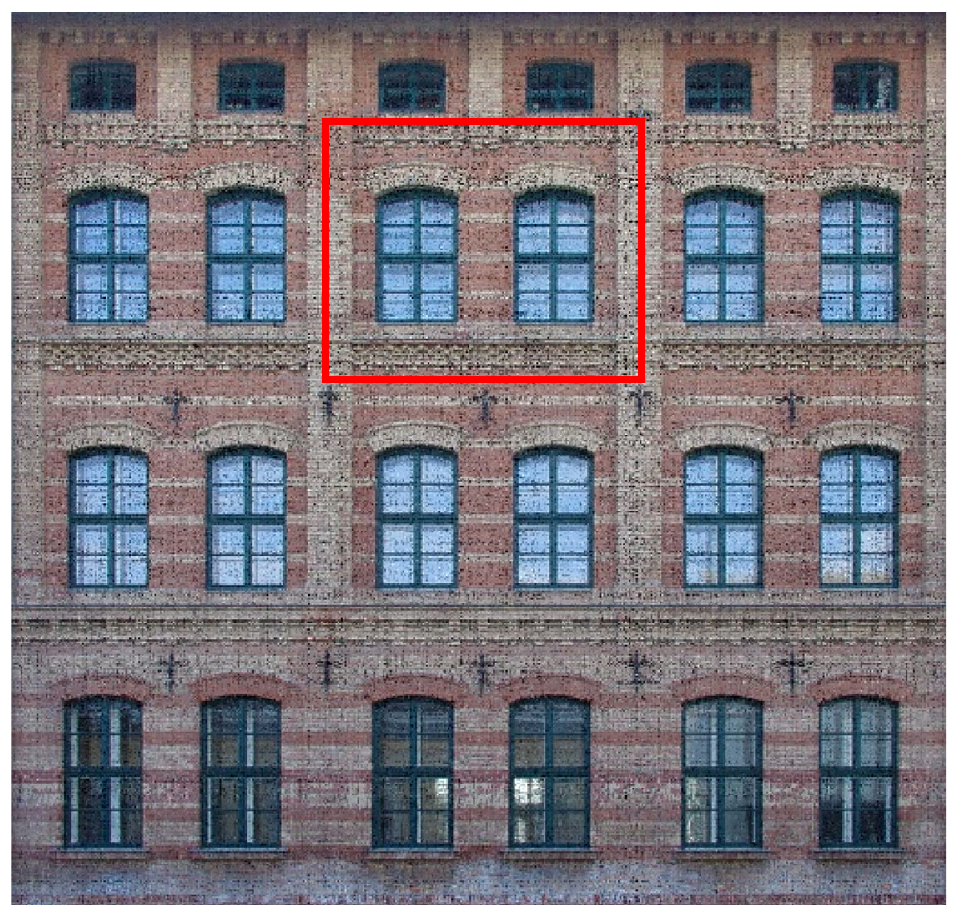}
\end{minipage}}
\subfigure[SVRG-ARM $\bf{25.1282/0.8953}$]{
\begin{minipage}[b]{0.25\textwidth}
\label{fig3: parameters-j} \includegraphics[width=1.1\textwidth]{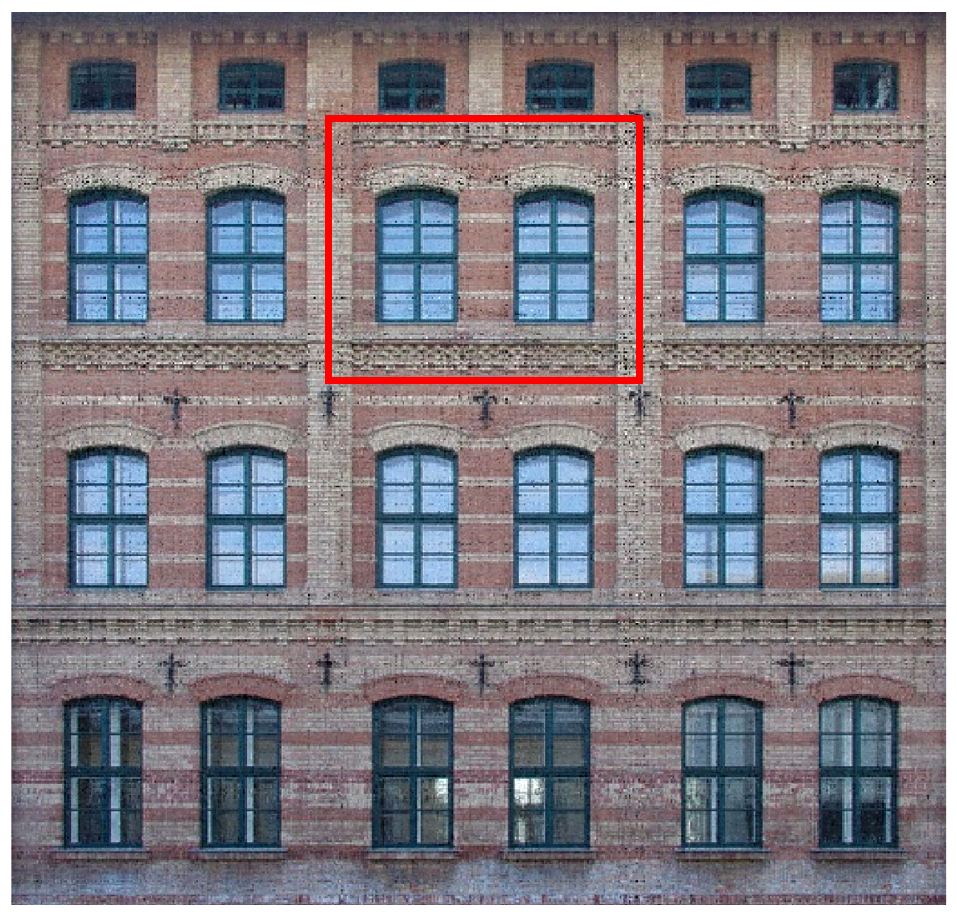}
\end{minipage}}
\quad
\caption{\small Comparison of matrix completion algorithms for image inpainting.(a) Original image. (b) Observed image with missing pixels. (c)-(h) Recovered images by SVT, SVP, NIHT, CGIHT, StoIHT, SVRG-ARM.}
\label{figure5}
\end{figure}

\subsection{Performance comparison within the class of gradient descent algorithms}
In this subsection, we conduct comparisons about matrix completion. The matrix completion as a classical affine rank minimization problem aims to recover a low-rank matrix from partially observed entries. We generate $n_{1}\times n_{2}$ matrices of rank $r$ as a product of a  $n_{1}\times r$ matrix and a $r\times n_{2}$ matrix, whose entries follow the Gaussian distributions. The locations of observed indices are sampled uniformly at random. Let $\rho$ be the sample ratio of observed entries over $n_{1}\times n_{2}$.

The first performance metric refers to the frequency rate of exact recovery. An exact recovery is recorded whenever $\|\widehat{X}-X\|_{F}/\|X\|_{F}\leq10^{-3}$, where $\widehat{X}$ denotes the estimate of original low-rank matrix $X$. We fix the matrix size to be $n_{1}=n_{2}=50$, set the sample ratio $\rho$ to be $0.5$ and vary rank $r$ to investigate the probability of recovery success. Each algorithm is tested for $100$ (random) trials for every rank $r$. Figure \ref{fig1: parameters-a} shows the frequency of exact recovery as a function of the rank. First, the recovery ability can be reflected by critical sparsity. The critical sparsity is the maximal sparsity level of the desired signal at which the exact recovery is ensured. Indeed, higher critical sparsity represents better empirical recovery performance. Figure \ref{fig1: parameters-a} reveals that the critical sparsity of SVRG-ARM is larger than that of other methods. The second metric is the convergence speed. In this experiment, the parametric setting is $n_{1}=n_{2}=50$, $\rho=0.5$, $r=4$. As shown in Figure \ref{fig: parameters-b}, except for SVT, the recovery accuracies of all other algorithms are almost identical. The convergence of SVRG-ARM is faster than other methods to reach the same optimality. These experiments suggest that SVRG-ARM outperforms StoIHT in both frequency of exact recovery and running time. It demonstrates the theoretical findings about variance reduced gradient, namely the conclusion that SVRG can reduce the variance introduced by stochastic gradient descent and accelerate the rate of convergence. To test the robustness to noise, we add the Gaussian noise with zero mean and standard deviation varying from $0$ to $0.4$ to low-rank matrix. Relative errors of all algorithms versus noise level are shown in Figure \ref{fig: parameters-c}. As shown, SVP, NIHT, CGIHT, StoIHT and SVRG-ARM are in the same level and SVRG-ARM slightly outperforms other methods.
\begin{figure}[H]
\centering
\subfigure[Frequency of exact recovery as a function of rank]{
\begin{minipage}[b]{0.321\textwidth}
\label{fig5: parameters-a}\includegraphics[width=1.1\textwidth]{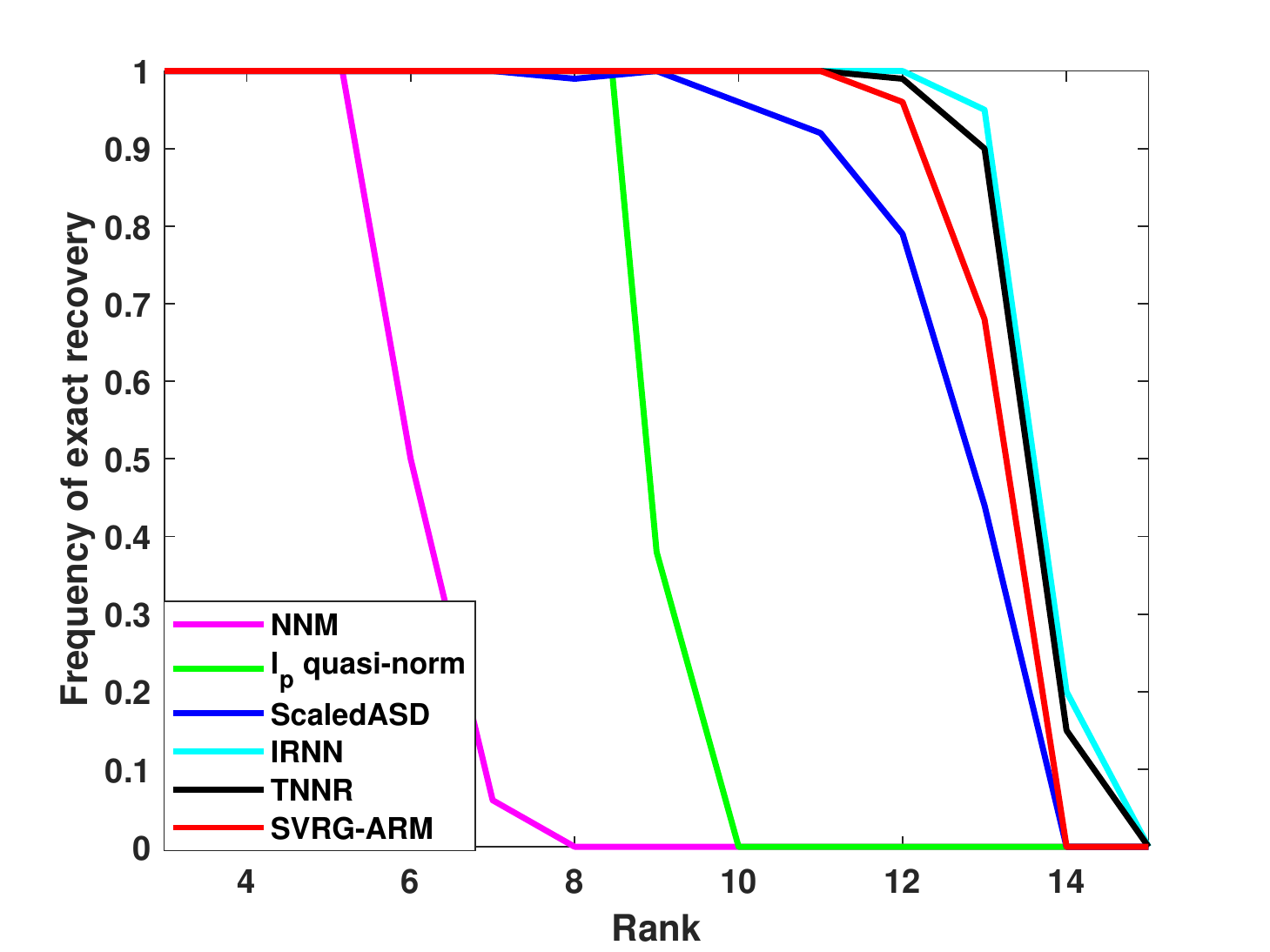}
\end{minipage}}
\subfigure[Convergence speed]{
\begin{minipage}[b]{0.321\textwidth}
\label{fig5: parameters-b} \includegraphics[width=1.1\textwidth]{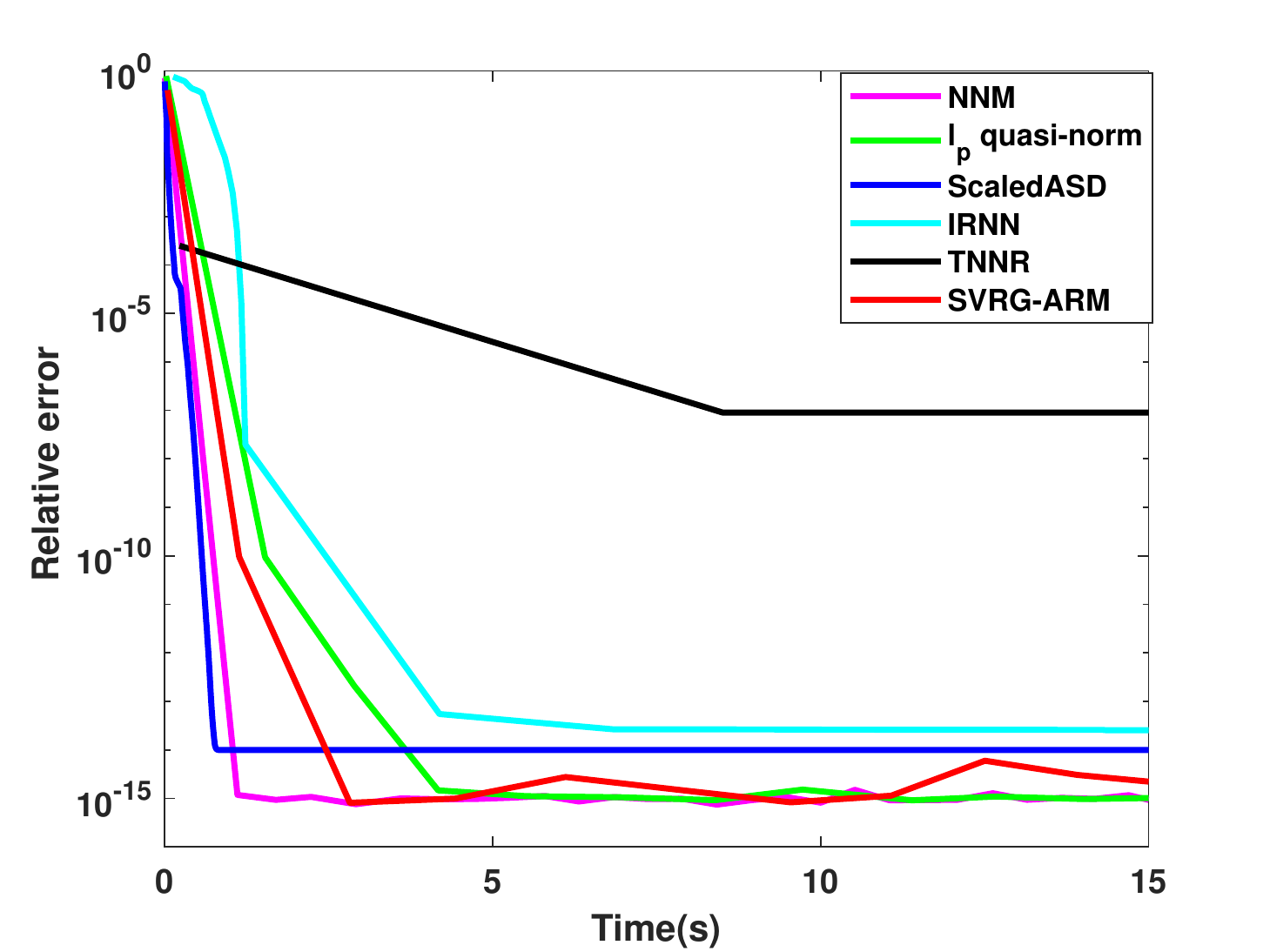}
\end{minipage}}
\subfigure[Normalized mean square
error as a function of noise level]{
\begin{minipage}[b]{0.321\textwidth}
\label{fig5: parameters-c} \includegraphics[width=1.1\textwidth]{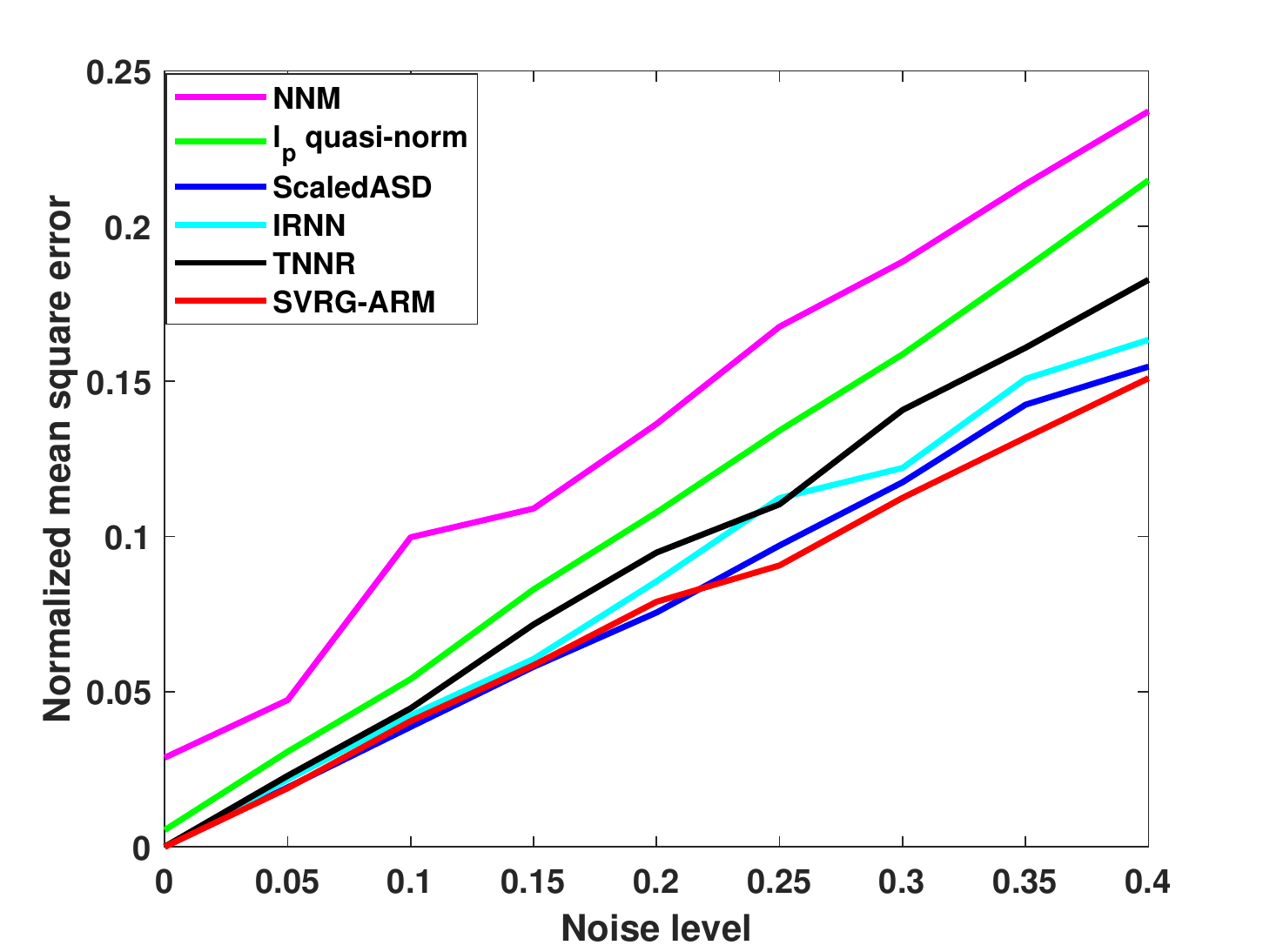}
\end{minipage}}
\quad
\caption{\small (a) Frequency of exact recovery as a function of rank. (b) Convergence speed. (c) Normalized mean square
error as a function of noise level}
\label{fig5}
\end{figure}
To further validate the effectiveness of SVRG-ARM, we check the recovery ability as a function of rank $r$ and proportion of sample ratio $\rho$. We fix the matrix size to be $n_{1}=n_{2}=50$ and vary rank $r$ and sample ratio $\rho$ to investigate the probability of recovery success. For each pair $(r,\rho)$, we simulate $100$ test instances. Figure \ref{figure2} shows the fraction of perfect recovery for each pair (black = $0$ and white =$1$). As known, the smaller the percentage of missing valu1es and the smaller the rank, the larger the region of correct recovery is. It is clear that the performance of our method SVRG-ARM is better than that of other methods.

We then present color image completion results. The size of the first image {\em pepper} is $512\times512$, the set of observed entries are generated randomly and the percentage of observed entries is $0.5$. The comparison is to apply matrix completion method to the luminance channel. Both the peak signal-to-noise ratio (PSNR) and structural similarity index (SSIM) are provided for the comparison. We find PSNR and SSIM by the proposed SVRG-ARM algorithm is better than that of other methods. As shown in Figure \ref{fig4}, in the rectangle region, it can be seen that SVRG-ARM generates high-level visual quality with sharper edges and richer textures in comparison with other methods. The size of the second image {\em facade} is $517\times493$, the set of observed entries are generated randomly and the percentage of observed entries is $0.4$. The quantitative comparisons show that SVRG-ARM can provide larger PSNR and SSIM values than those by other methods. In addition, Figure \ref{figure5} shows that the reconstructed image by SVRG-ARM has higher quality edges with proper sharpness and limited artifacts. The experimental results verify that SVRG-ARM outperforms other gradient descent algorithms in terms of both synthetic and real data.
\begin{figure}[H]
\centering
\subfigure[NNM]{
\begin{minipage}[b]{0.321\textwidth}
\label{fig6: parameters-a}\includegraphics[width=1.1\textwidth]{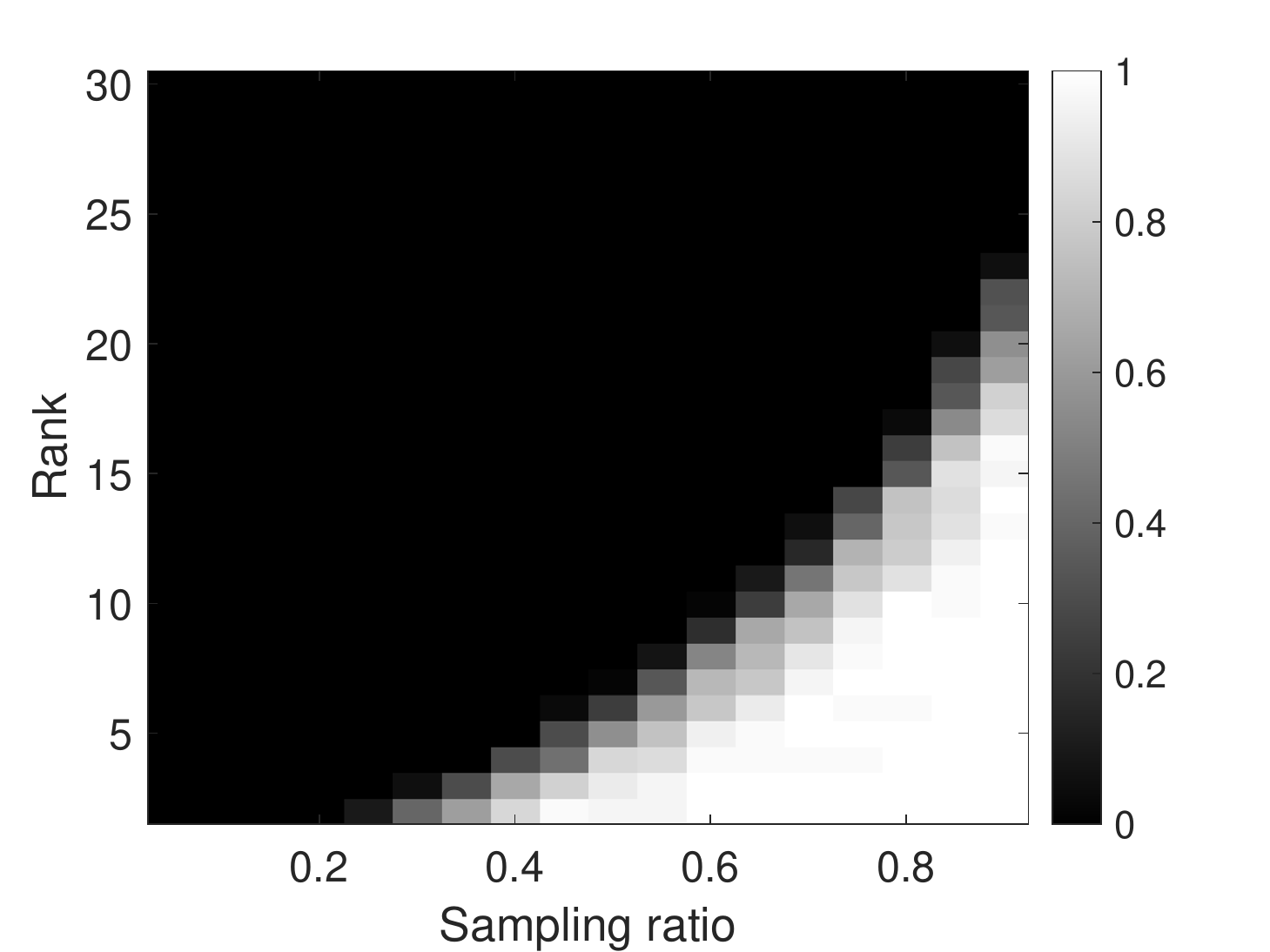}
\end{minipage}}
\subfigure[$\ell_{p}$ quasi-norm]{
\begin{minipage}[b]{0.321\textwidth}
\label{fig6: parameters-b} \includegraphics[width=1.1\textwidth]{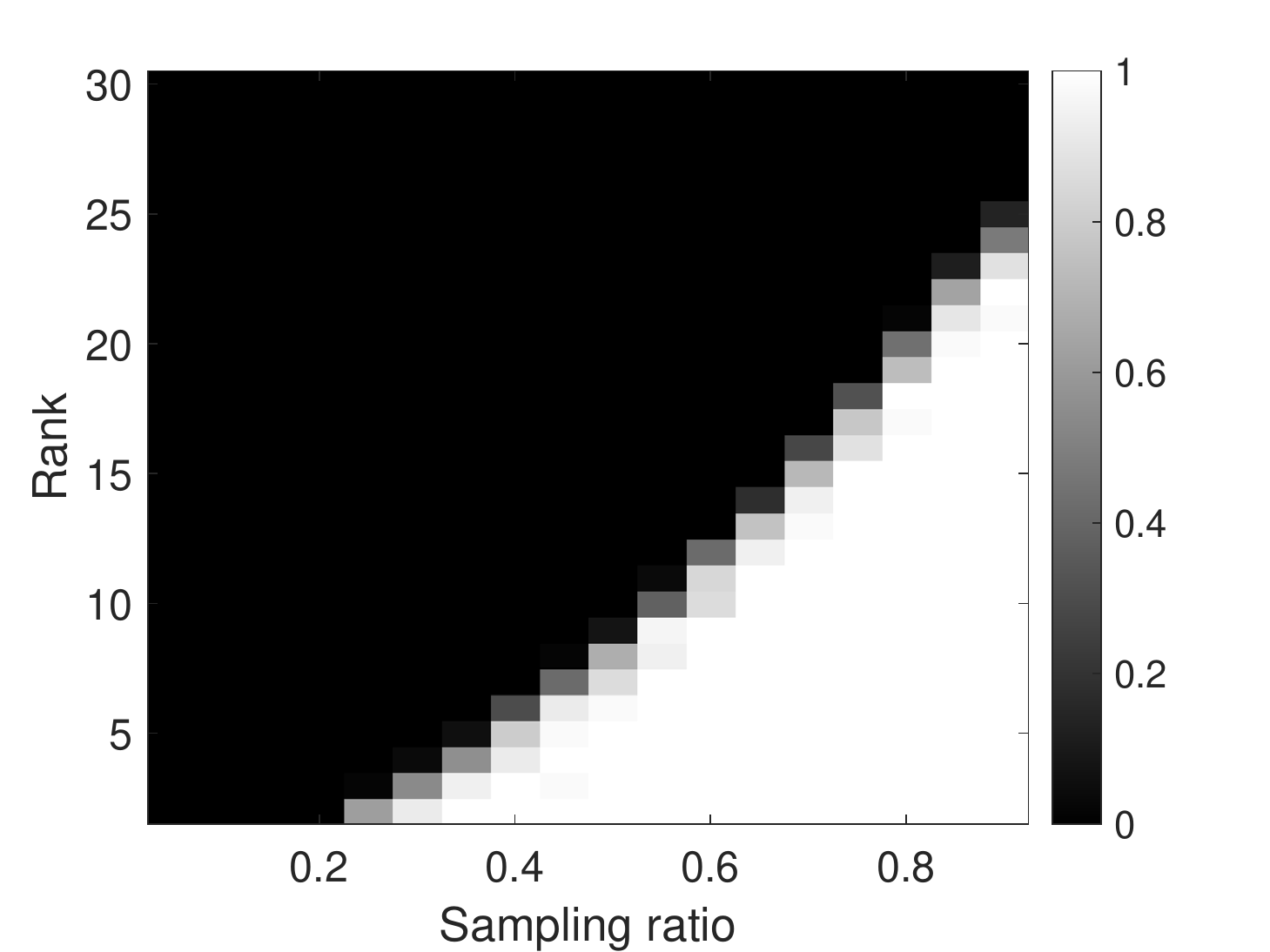}
\end{minipage}}
\subfigure[ScaledASD]{
\begin{minipage}[b]{0.321\textwidth}
\label{fig6: parameters-c} \includegraphics[width=1.1\textwidth]{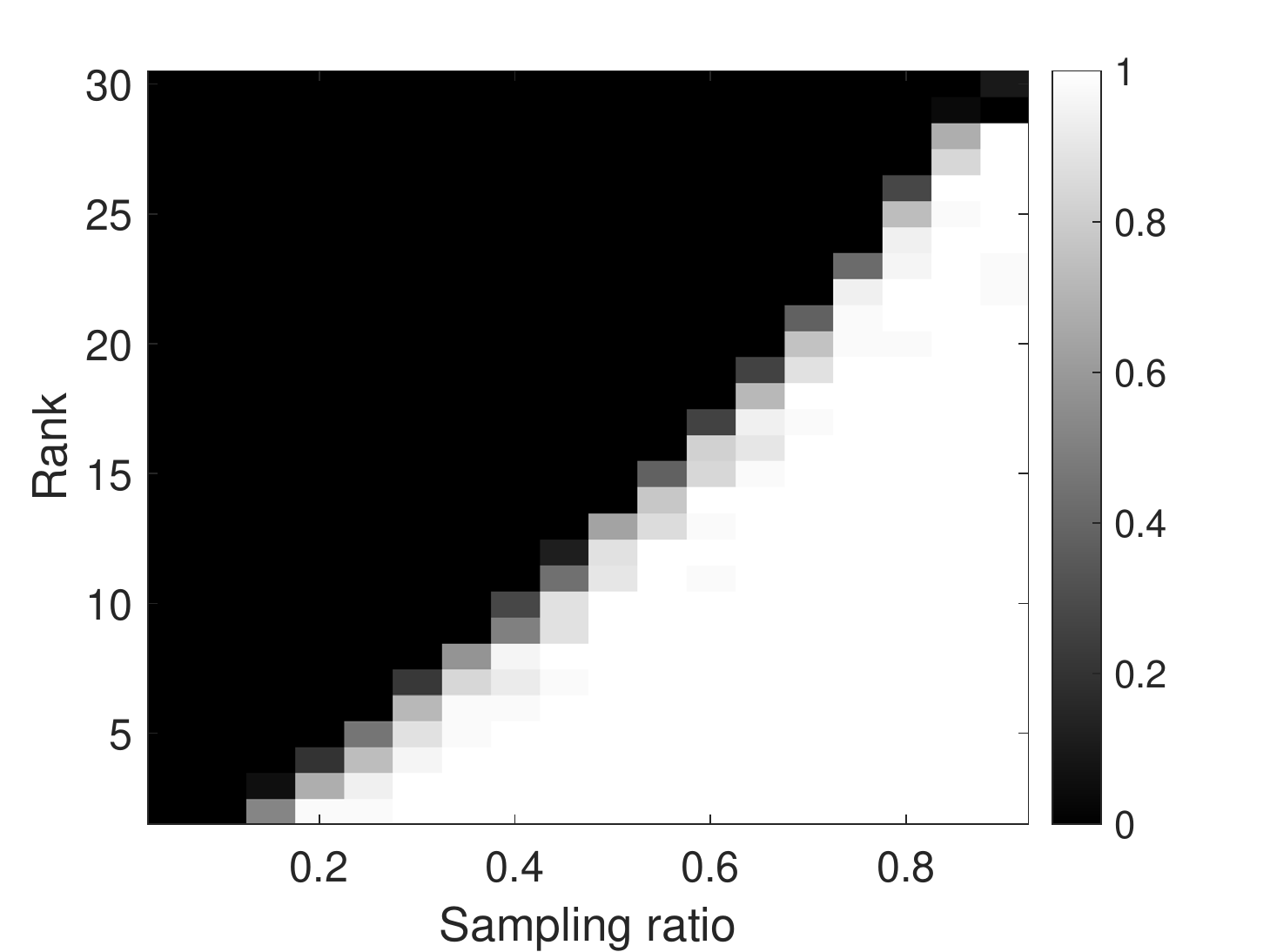}
\end{minipage}}
\subfigure[IRNN]{
\begin{minipage}[b]{0.321\textwidth}
\label{fig6: parameters-d}\includegraphics[width=1.1\textwidth]{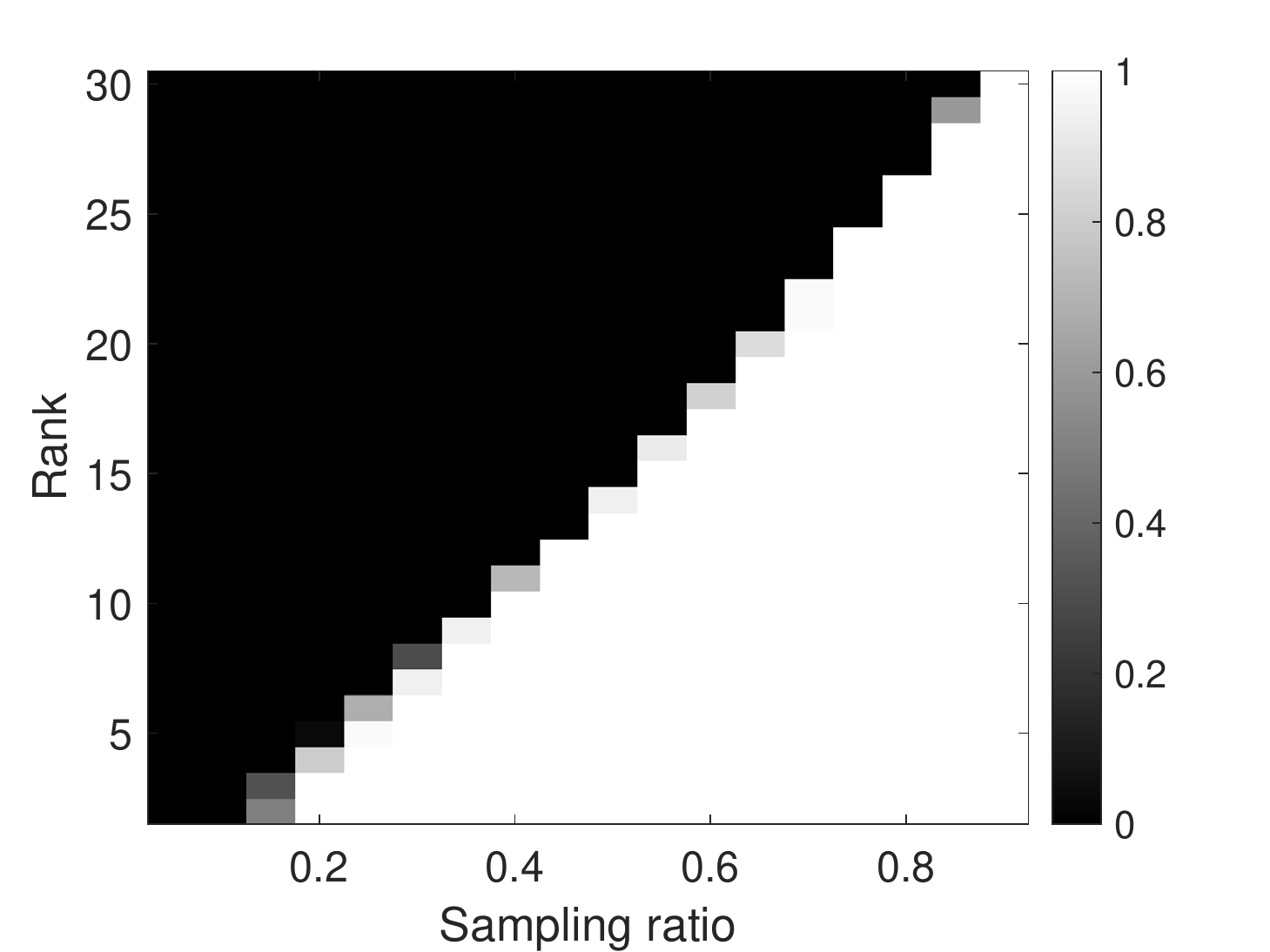}
\end{minipage}}
\subfigure[TNNR]{
\begin{minipage}[b]{0.321\textwidth}
\label{fig6: parameters-e} \includegraphics[width=1.1\textwidth]{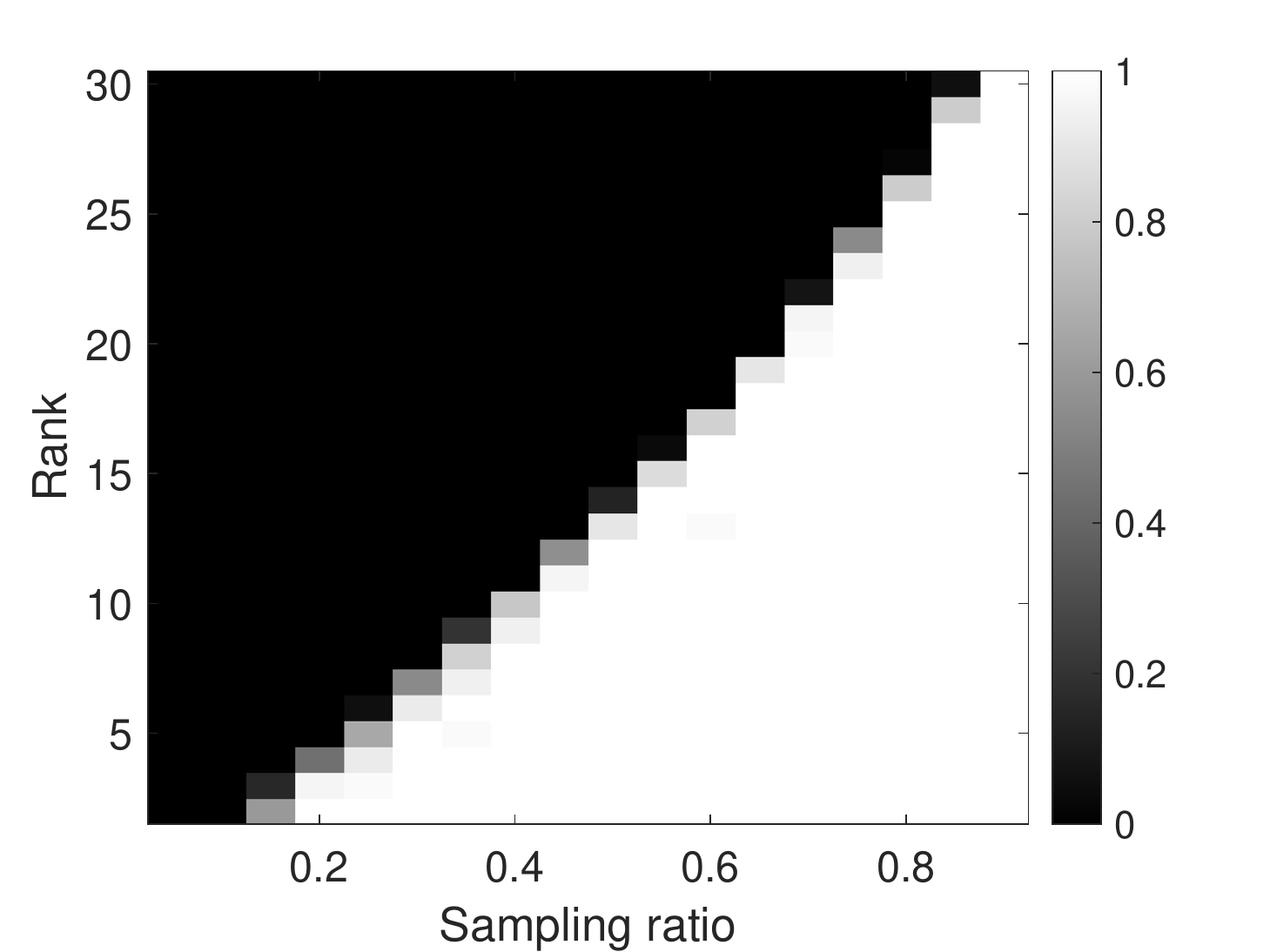}
\end{minipage}}
\subfigure[SVRG-ARM]{
\begin{minipage}[b]{0.321\textwidth}
\label{fig6: parameters-f} \includegraphics[width=1.1\textwidth]{phase_transition_SVRG.pdf}
\end{minipage}}
\quad
\caption{\small Phase transition of low-rank matrix completion using (a) NNM. (b) $\ell_{p}$ quasi-norm. (c) ScaledASD. (d) IRNN. (c) TNNR. (d) SVRG-ARM.}
\label{figure6}
\end{figure}

\subsection{Overall comparison with state-of-the-art algorithms}

Presented here are comparisons among SVRG-ARM and state-of-the-art techniques such as NNM, $\ell_{p}$ quasi-norm, ScaledASD, IRNN, and TNNR in terms of frequency of exact recovery, convergence speed and robustness. In the first experiment, rank varies from $3$ to $15$ with matrix size of $n_{1}=n_{2}=50$ and sample ratio $\rho=0.5$. As shown in Figure \ref{fig5: parameters-a}, IRNN and TNNR present better recovery performances than SVRG-ARM. In the following experiments, we set $n_{1}=n_{2}=50$, $\rho=0.5$ and $r=8$. For comparison, the particular rank 
selection $r=5$ is used for NNM. For the execution-time comparison, ScaledASD achieves the best convergence speed, and however SVRG-ARM presents a better recovery accuracy than ScaledASD. To test the robustness to noise, we add the Gaussian noise with zero mean and standard deviation varying from $0$ to $0.4$ to low-rank matrix. Relative errors of all algorithms versus noise level are shown in Figure \ref{fig5: parameters-c}. As shown, SVRG-ARM is more robust than other algorithms. Experimental results suggest that no algorithm is consistently superior for all cases. But SVRG-ARM is observed to have obviously advantageous balance of efficiency, accuracy and robustness compared with other algorithms.

We then compare phase transitions of low-rank matrix completion using different methods, where the recovery ability as a function of rank $r$ and proportion of sample ratio $\rho$ is investigated. Successful recovery is indicated by white and failure by black. Results are averaged over $100$ independent trials. Figure \ref{figure6} shows that SVRG-ARM still delivers reasonable performance better than that of NNM, $\ell_{p}$ quasi-norm, ScaledASD, though slightly underperforms that of IRNN and TNNR.

Finally, we conduct image completions to compare different methods. The size of the first image {\em flower} is $512\times480$, the set of observed entries are generated randomly and the percentage of observed entries is $0.5$. Two common image quality evaluation criteria PSNR and SSIM are still employed to reflect the image recovery quality. Figure \ref{figure7} shows that our algorithm achieves the highest PSNR and SSIM among all methods. From the rectangle region, it can also be observed that SVRG-ARM provides high-level visual quality with sharper edges and richer textures. To further illustrate the effectiveness of the proposed method, we show the reconstructed results of image {\em baboon} by different methods in Figure \ref{figure8}. In this experiment, the size of the image is $512\times512$, the set of observed entries are generated randomly and the percentage of observed entries is $0.4$. This experiment manifests that SVRG-ARM can obtain good results especially referring to edges (high frequency details). Numerical results about image completions demonstrate the effectiveness of SVRG-ARM among different low-rank matrix completion algorithms. 
\begin{figure}[H]
\centering
\subfigure[Original image]{
\begin{minipage}[b]{0.25\textwidth}
\label{fig7: parameters-a}\includegraphics[width=1.1\textwidth]{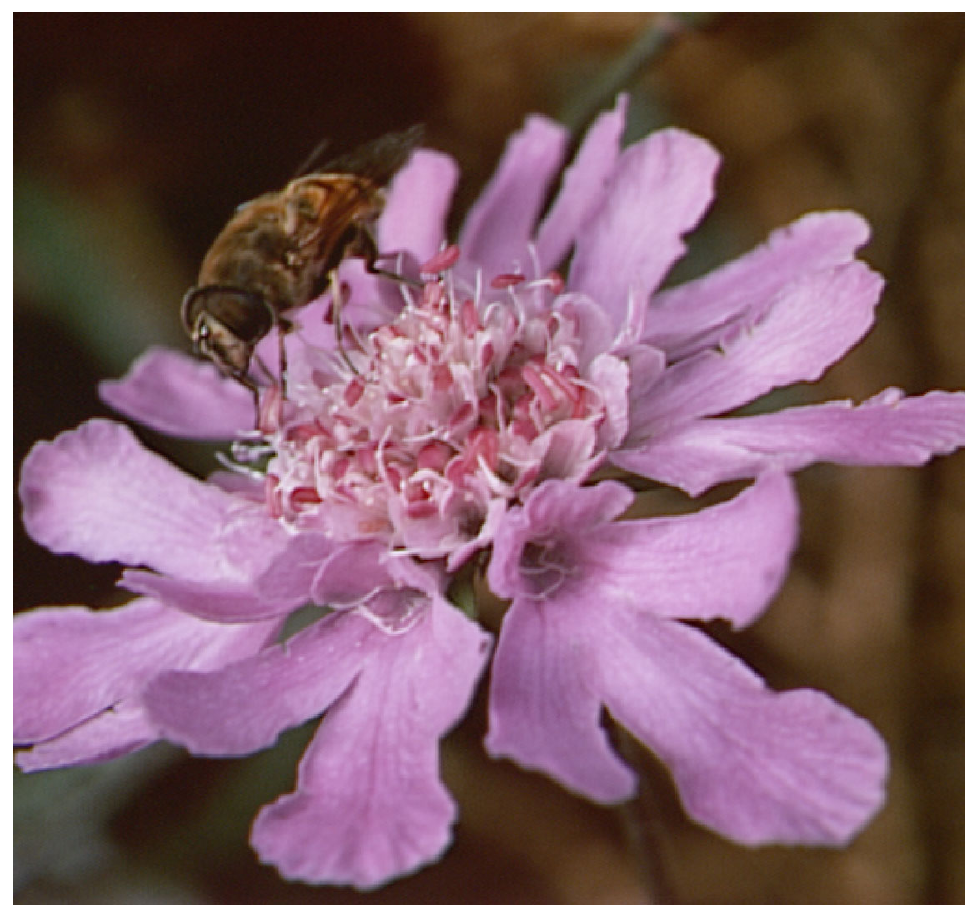}
\end{minipage}}
\subfigure[Observed image with missing pixels]{
\begin{minipage}[b]{0.25\textwidth}
\label{fig7: parameters-b} \includegraphics[width=1.1\textwidth]{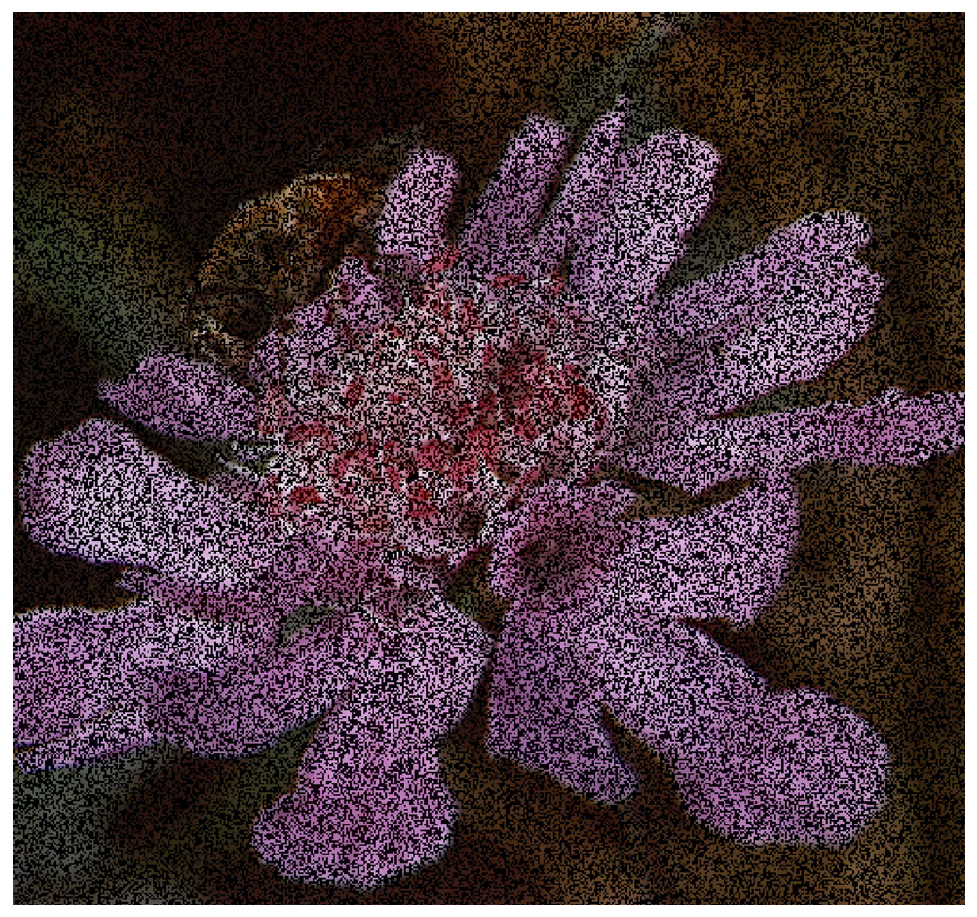}
\end{minipage}}
\subfigure[NNM $30.4058/ 0.9189$]{
\begin{minipage}[b]{0.25\textwidth}
\label{fig7: parameters-c} \includegraphics[width=1.1\textwidth]{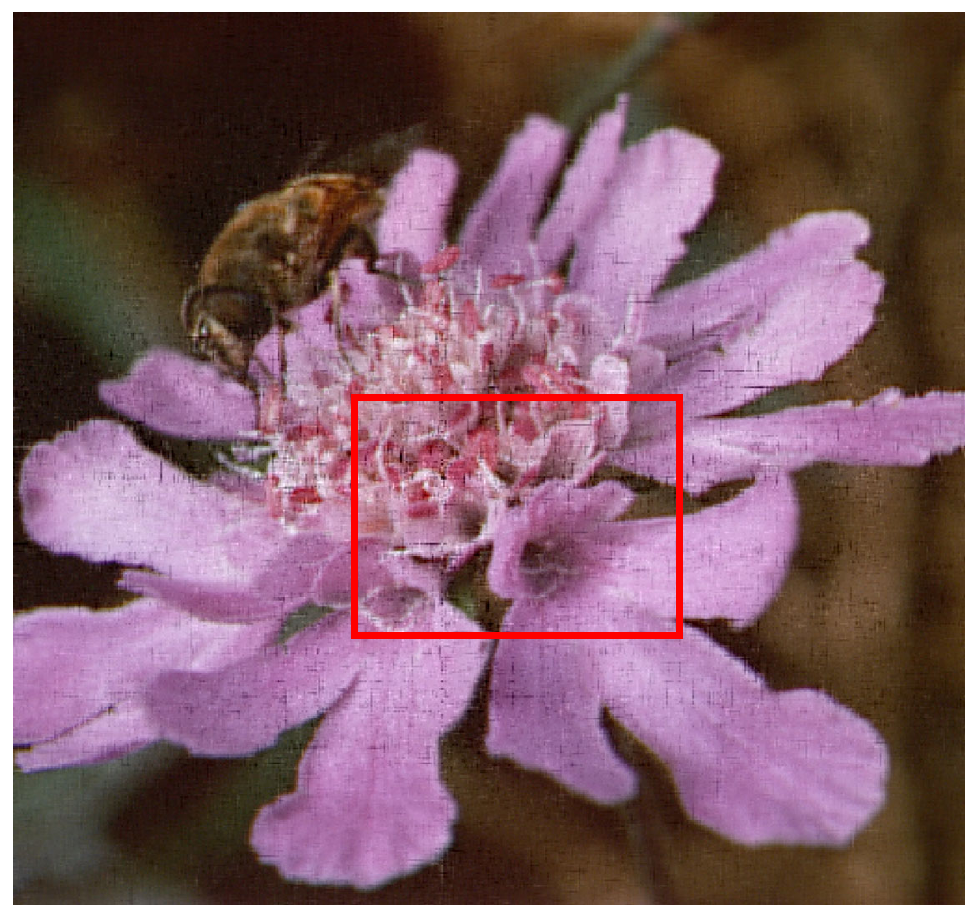}
\end{minipage}}
\subfigure[$\ell_{p}$ quasi-norm $30.5831/0.9225$]{
\begin{minipage}[b]{0.25\textwidth}
\label{fig7: parameters-e} \includegraphics[width=1.1\textwidth]{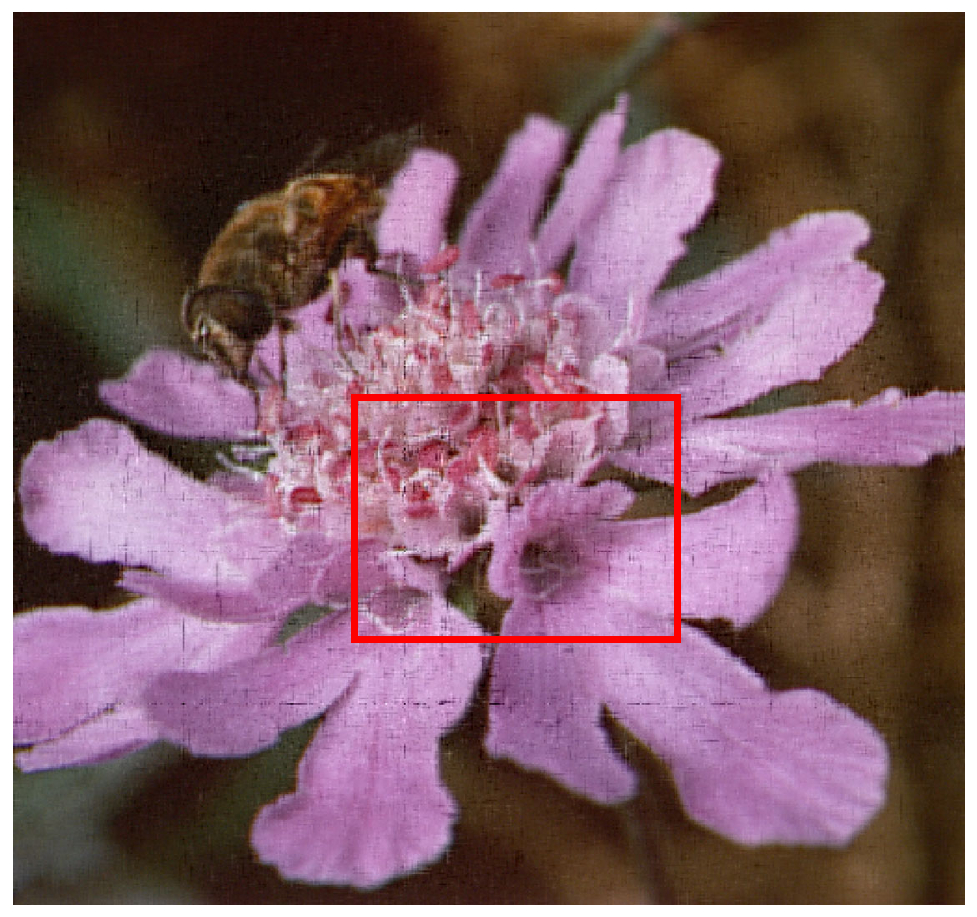}
\end{minipage}}
\subfigure[ScaledASD $33.5536/0.9586$]{
\begin{minipage}[b]{0.25\textwidth}
\label{fig7: parameters-f} \includegraphics[width=1.1\textwidth]{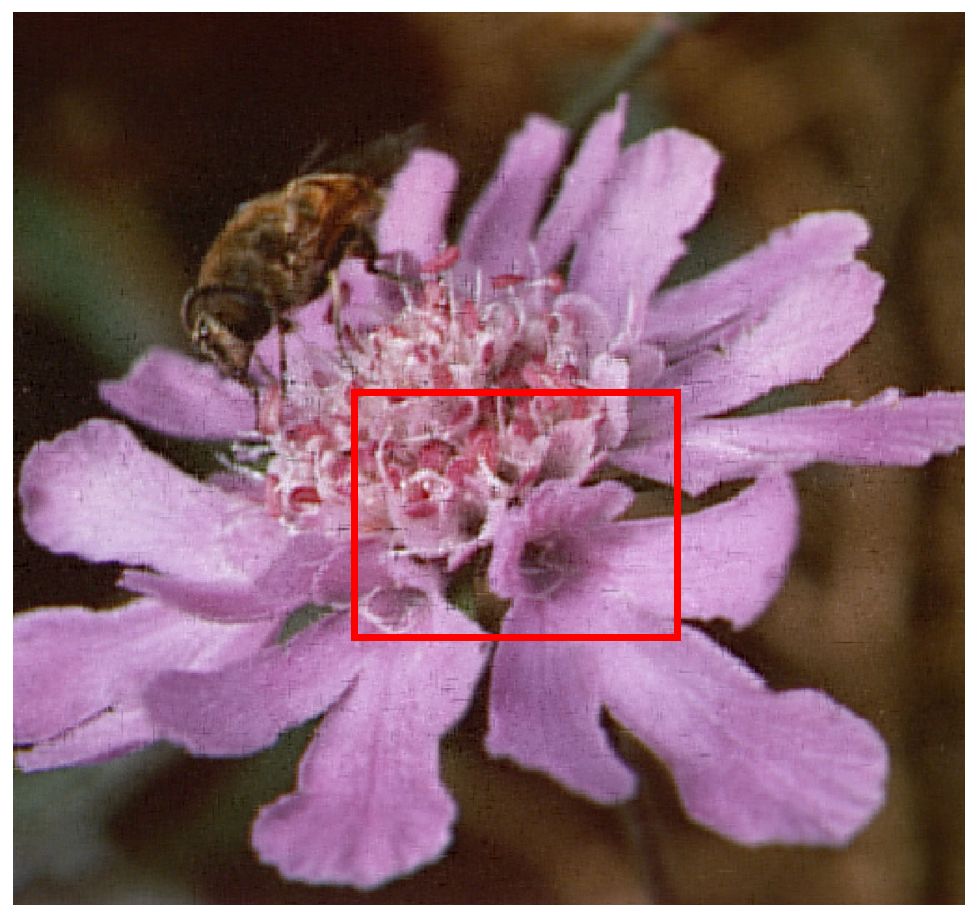}
\end{minipage}}
\subfigure[IRNN $33.0308/0.9539$]{
\begin{minipage}[b]{0.25\textwidth}
\label{fig7: parameters-h} \includegraphics[width=1.1\textwidth]{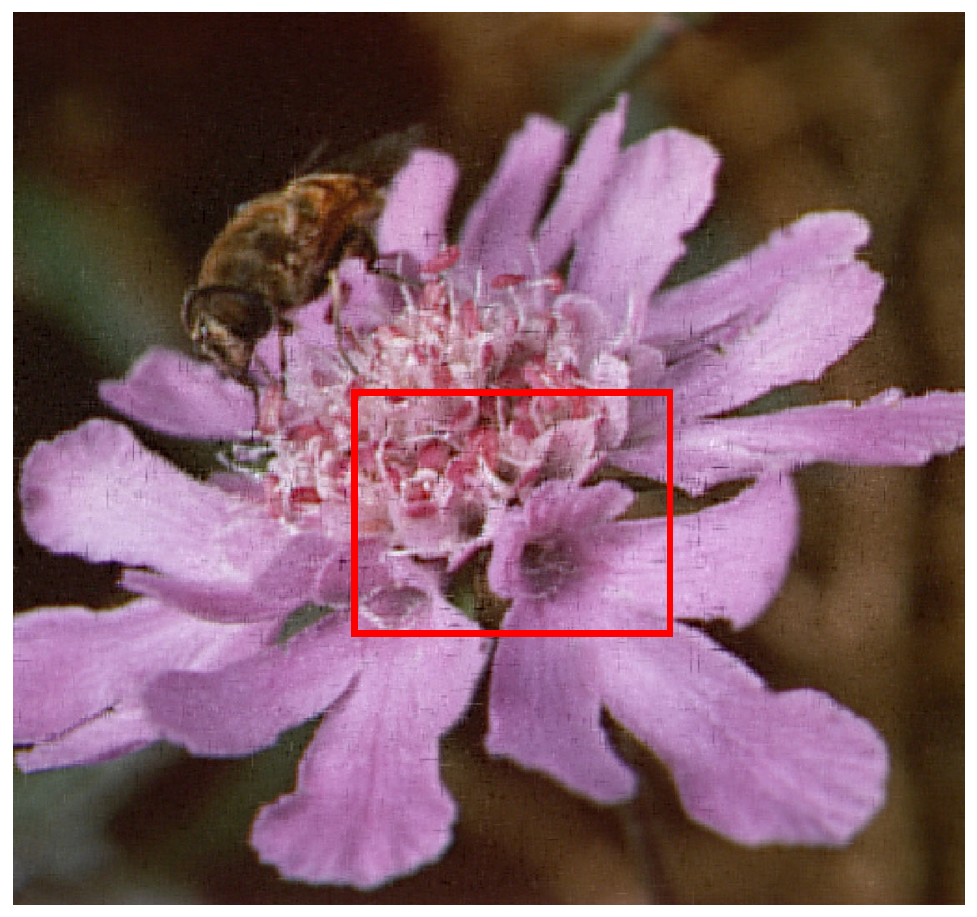}
\end{minipage}}
\subfigure[TNNR $34.4427/0.9649$]{
\begin{minipage}[b]{0.25\textwidth}
\label{fig7: parameters-i} \includegraphics[width=1.1\textwidth]{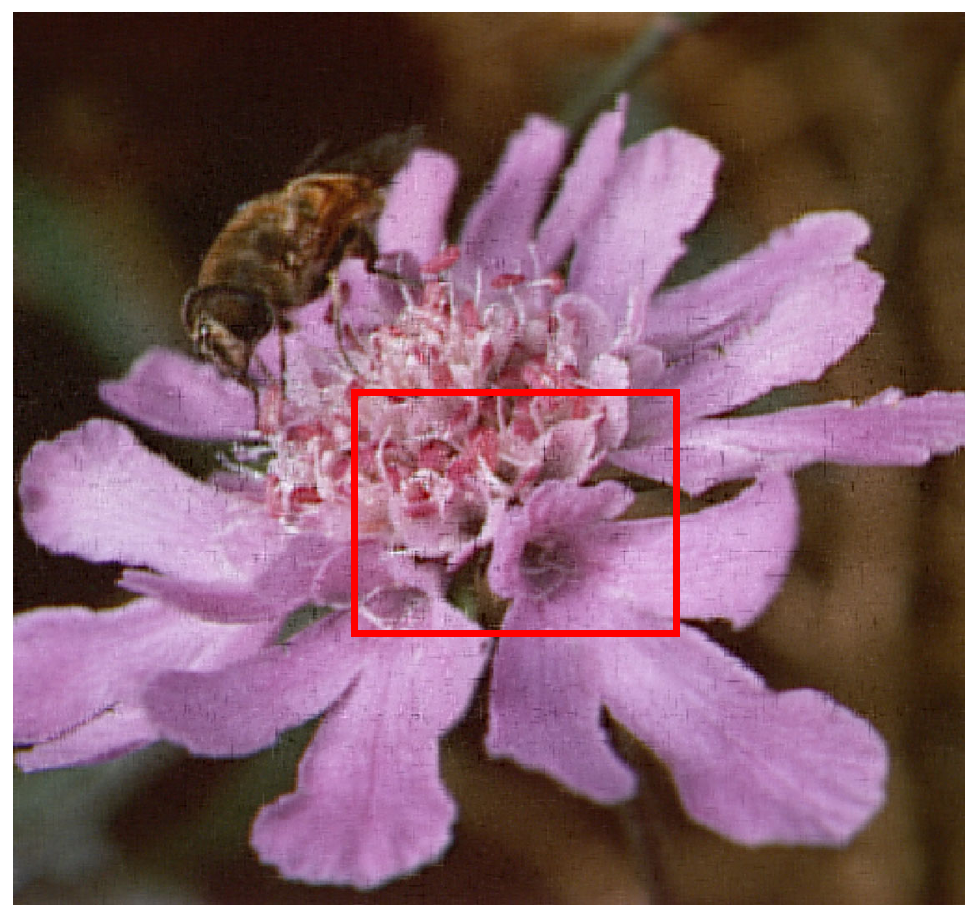}
\end{minipage}}
\subfigure[SVRG-ARM $\bf{34.5725/0.9665}$]{
\begin{minipage}[b]{0.25\textwidth}
\label{fig7: parameters-j} \includegraphics[width=1.1\textwidth]{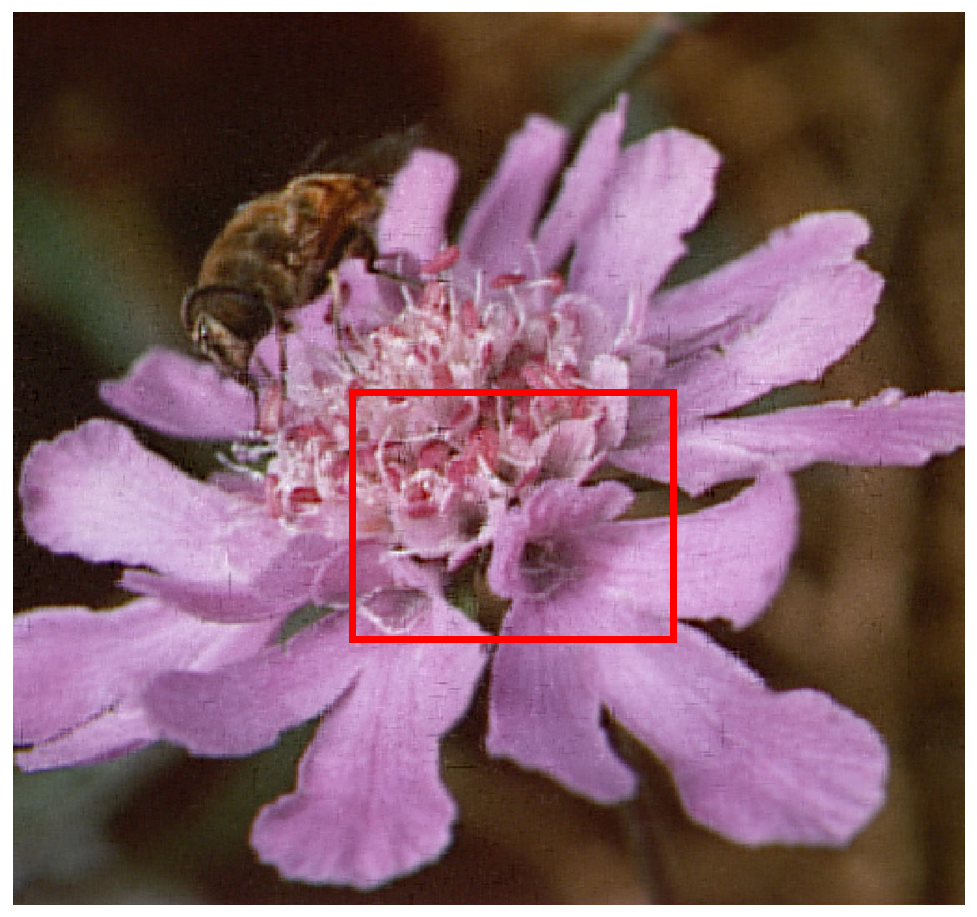}
\end{minipage}}
\quad
\caption{\small Comparison of matrix completion algorithms for image completion. (a) Original image. (b) Observed image with missing pixels. (c)-(h) Recovered images by NNM, $\ell_{p}$ quasi-norm, ScaledASD, IRNN, TNNR, SVRG-ARM.}
\label{figure7}
\end{figure}
\section{Conclusion}
We introduce a particularly simple yet highly efficient stochastic variance reduced gradient descent algorithm to solve the affine rank minimization problem consists of finding a matrix of minimum rank from linear measurements. We prove that the proposed algorithm converges linearly in expectation to the solution under a restricted isometry condition. It should be pointed out that the linear convergence condition is not necessarily optimal at present times, which can be relaxed with perhaps plenty of rooms to improve. The proposed algorithm is observed to have obviously advantageous balance of efficiency, adaptivity, and accuracy compared with other state-of-the-art greedy algorithms. A matlab implementation of the proposed algorithm is also available at \url{https://www.dropbox.com/s/9gte2as7gcarl80/SVRG-ARM.zip?dl=0}. 
\begin{figure}[H]
\centering
\subfigure[Original image]{
\begin{minipage}[b]{0.25\textwidth}
\label{fig8: parameters-a}\includegraphics[width=1.1\textwidth]{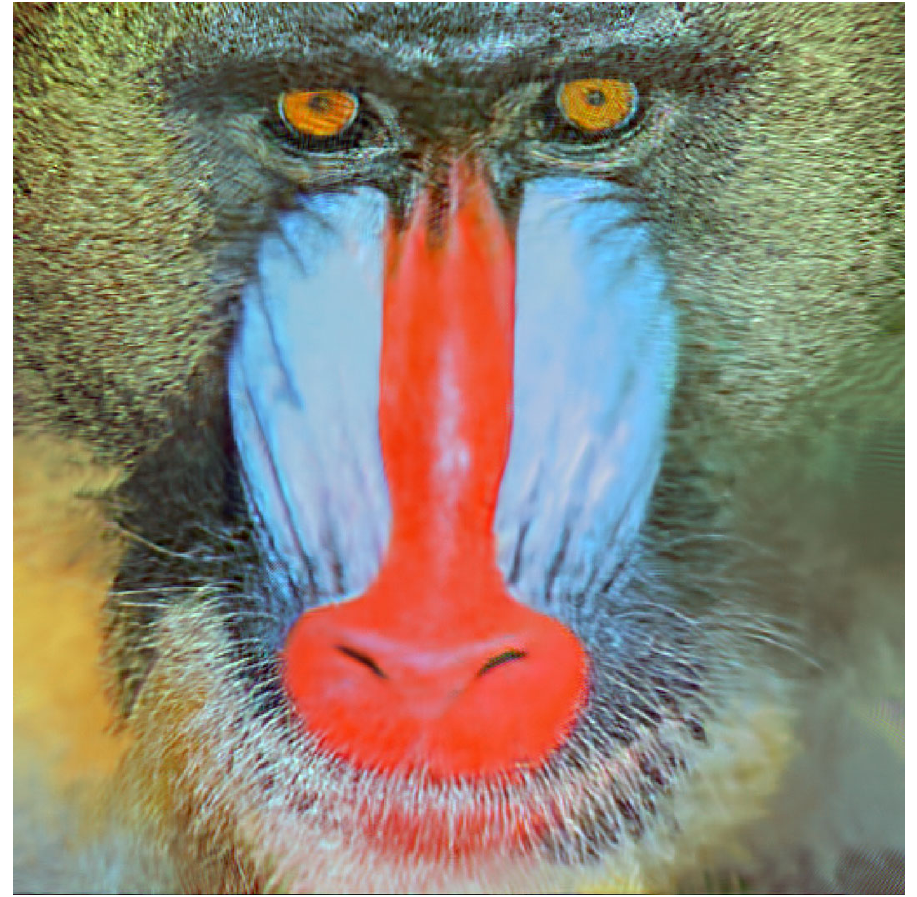}
\end{minipage}}
\subfigure[Observed image with missing pixels]{
\begin{minipage}[b]{0.25\textwidth}
\label{fig8: parameters-b} \includegraphics[width=1.1\textwidth]{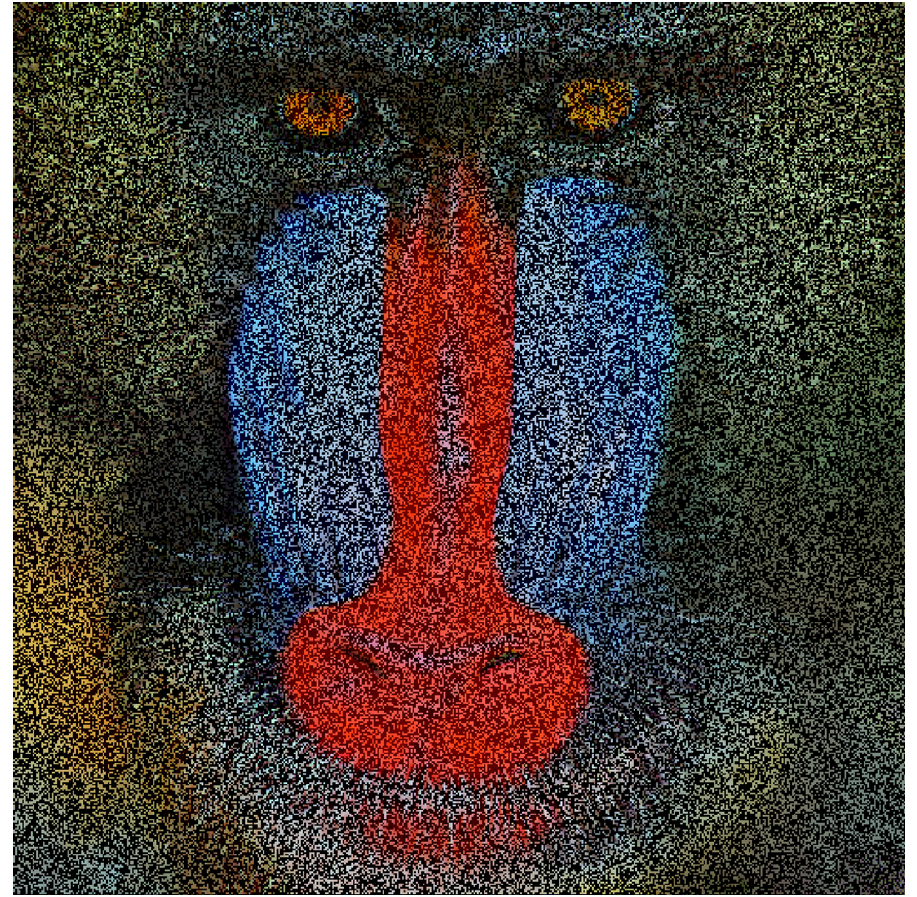}
\end{minipage}}
\subfigure[NNM $21.5856/0.6405$]{
\begin{minipage}[b]{0.25\textwidth}
\label{fig8: parameters-c} \includegraphics[width=1.1\textwidth]{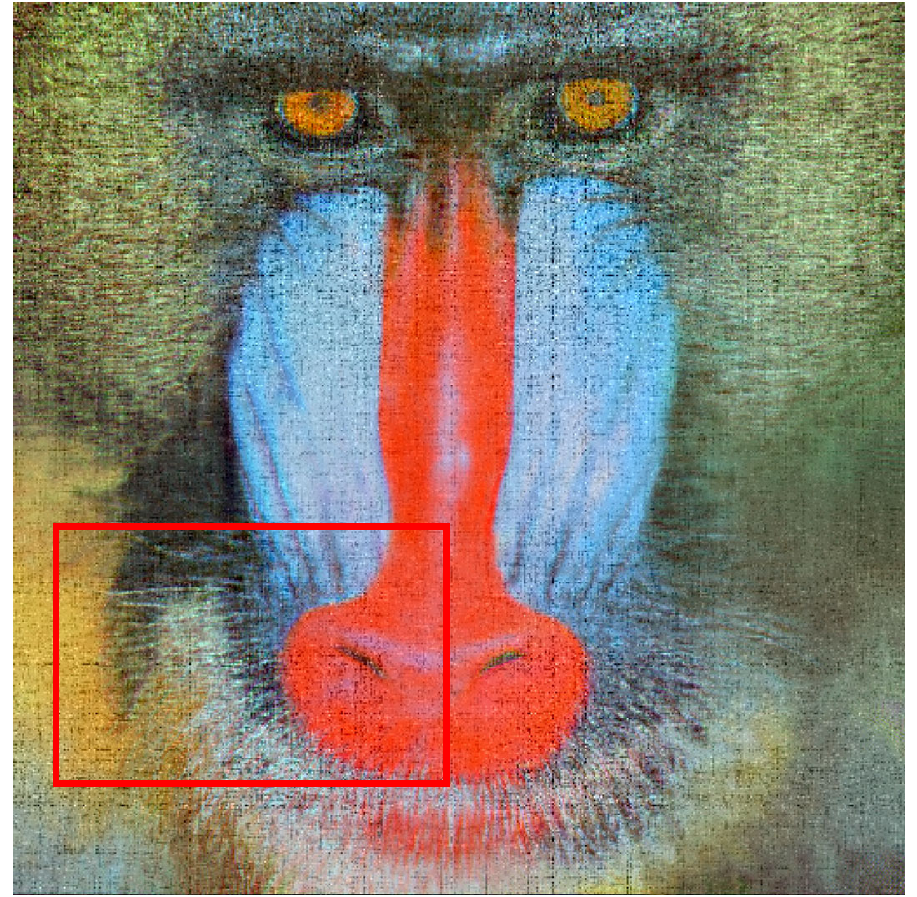}
\end{minipage}}
\subfigure[$\ell_{p}$ quasi-norm $22.4057/0.6831$]{
\begin{minipage}[b]{0.25\textwidth}
\label{fig8: parameters-e} \includegraphics[width=1.1\textwidth]{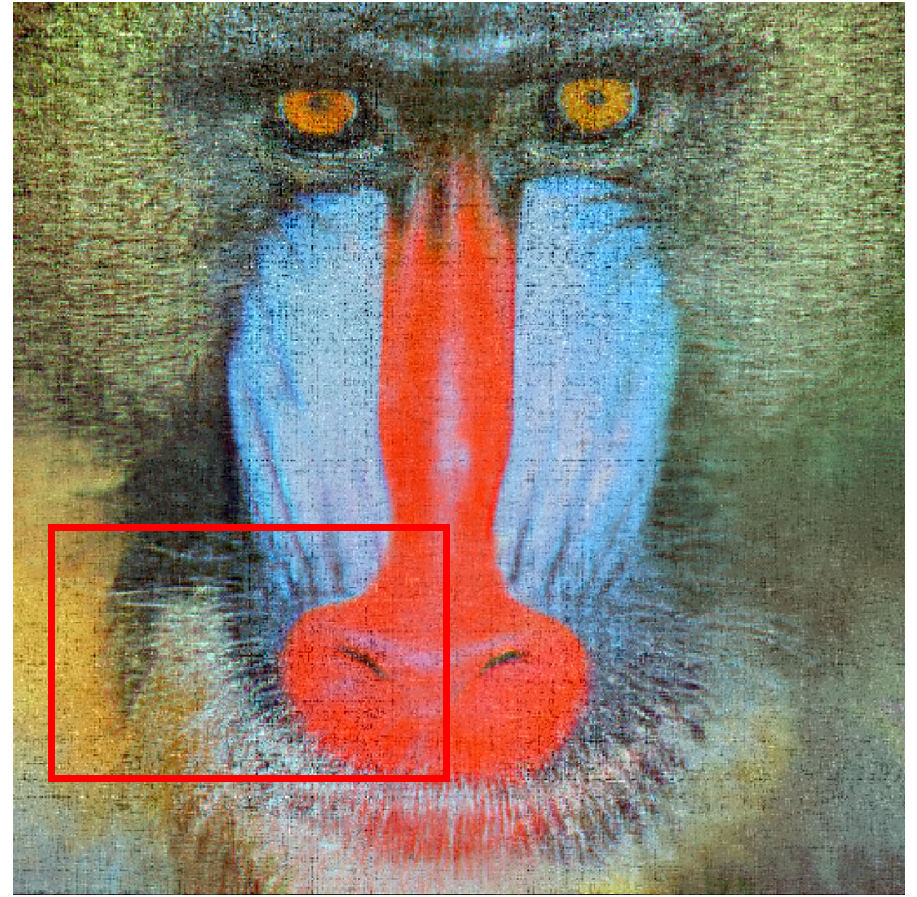}
\end{minipage}}
\subfigure[ScaledASD $23.4571/0.7465$]{
\begin{minipage}[b]{0.25\textwidth}
\label{fig8: parameters-f} \includegraphics[width=1.1\textwidth]{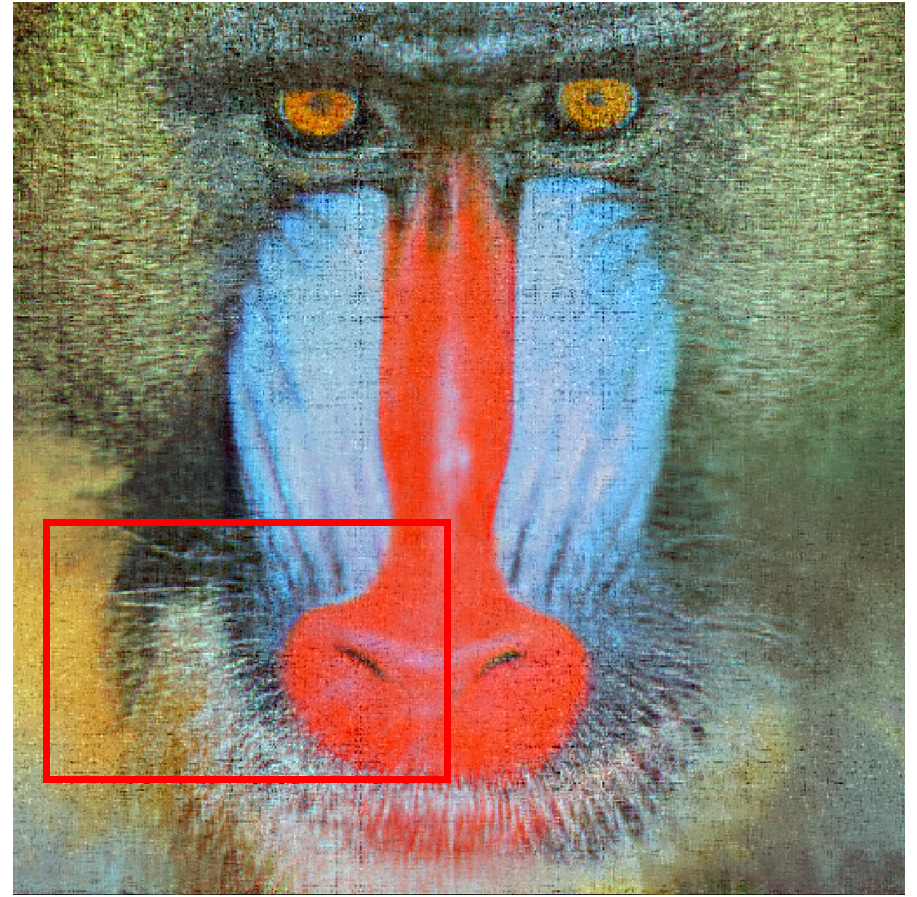}
\end{minipage}}
\subfigure[IRNN $23.7856/0.7602$]{
\begin{minipage}[b]{0.25\textwidth}
\label{fig8: parameters-h} \includegraphics[width=1.1\textwidth]{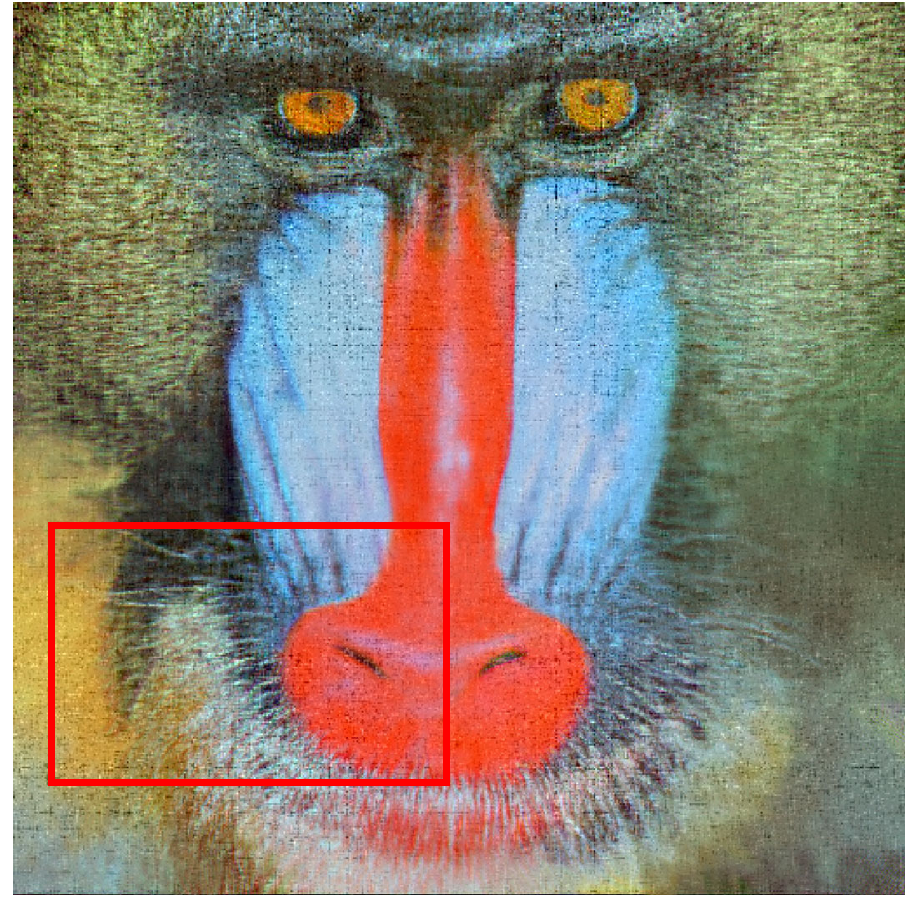}
\end{minipage}}
\subfigure[TNNR $23.8627/0.7638$]{
\begin{minipage}[b]{0.25\textwidth}
\label{fig8: parameters-i} \includegraphics[width=1.1\textwidth]{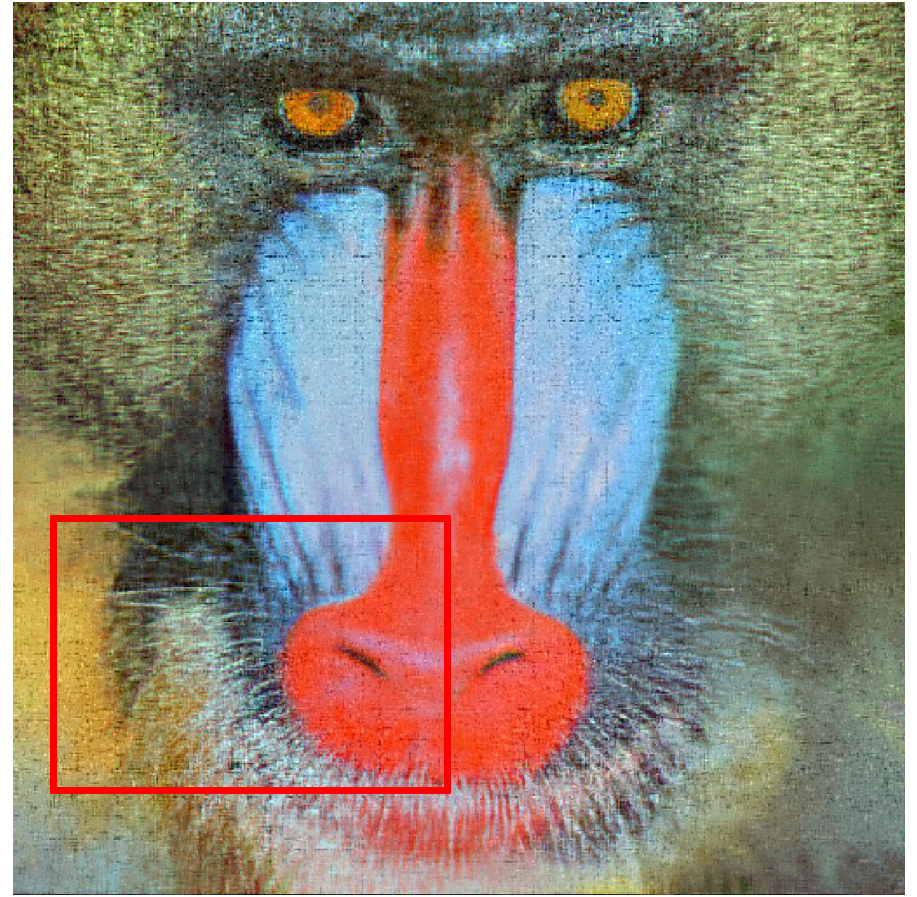}
\end{minipage}}
\subfigure[SVRG-ARM $\bf{24.2060/0.7796}$]{
\begin{minipage}[b]{0.25\textwidth}
\label{fig8: parameters-j} \includegraphics[width=1.1\textwidth]{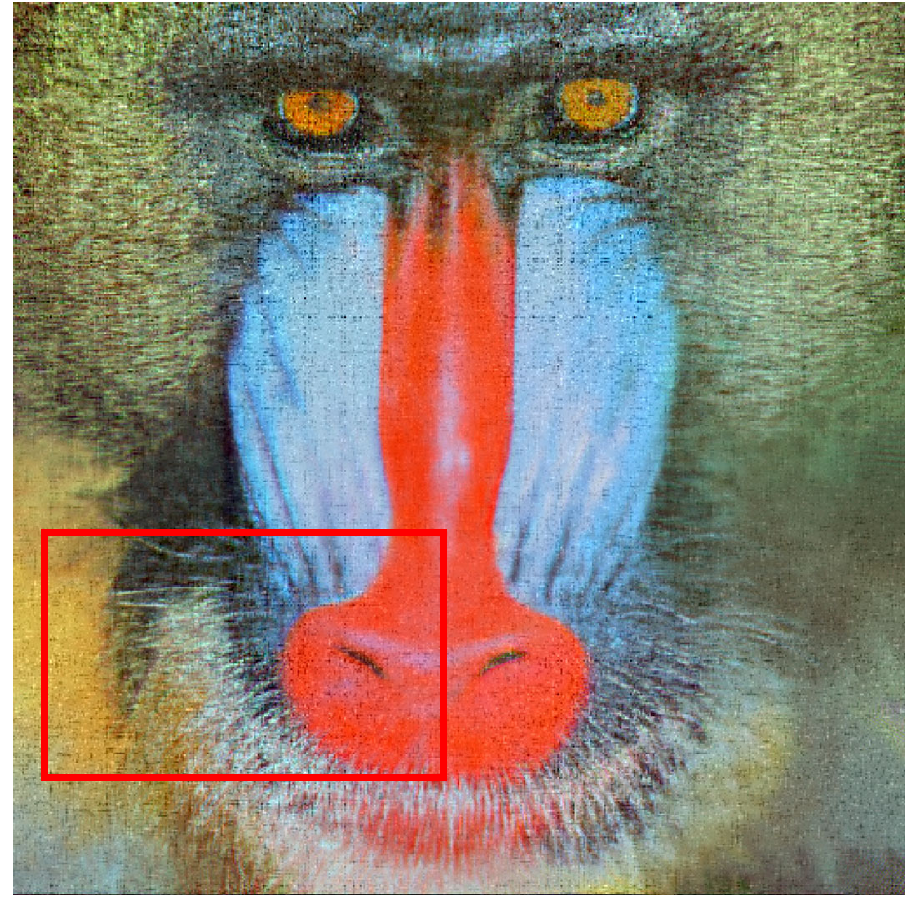}
\end{minipage}}
\quad
\caption{\small Comparison of matrix completion algorithms for image inpainting.(a) Original image. (b) Observed image with missing pixels. (c)-(h) Recovered images by NNM, $\ell_{p}$ quasi-norm, ScaledASD, IRNN, TNNR, SVRG-ARM.}
\label{figure8}
\end{figure}
\section*{Acknowledgments}

This work is supported in part by  GuangDong Basic and Applied Basic Research Foundation under grant 2021A1515110530, the Foundation for Distinguished Young Talents of Guangdong under grant 2021KQNCX075, National Natural Science Foundation of China under grants U21A20455, 61972265, 11871348 and 61373087, the Natural Science Foundation of Guangdong Province of China under grant 2020B1515310008, the Educational Commission of Guangdong Province of China undergrant 2019KZDZX1007, and the Guangdong Key Laboratory of Intelligent Information Processing, China. M. Ng's research is supported 
in part by the HKRGC GRF 12300218, 12300519, 17201020, 17300021, C1013-21GF, C7004-21GF, and Joint
NSFC-RGC N-HKU76921.

\end{document}